\documentclass{article}
\usepackage{graphicx,amsmath,amssymb,geometry,bbm,amsthm,xcolor,physics,bm,comment,url, mathtools}
\newtheorem{theorem}{Theorem}[section]
\newtheorem{maintheorem}{Theorem}
\newtheorem{corollary}{Corollary}[theorem]
\newtheorem{lemma}[theorem]{Lemma}
\newtheorem{definition}{Definition}

\newtheorem{proposition}[theorem]{Proposition}
\newcommand{\lra}{\leftrightarrow}
\newcommand{\wto}[1]{\widetilde{#1}}
\newcommand{\dwt}[1]{\widetilde{\widetilde{#1}}}
\newcommand{\sda}[1]{\ensuremath{\,{\buildrel #1 \over \Longleftrightarrow}\,}}
\newcommand{\sa}[1]{\ensuremath{\,{\buildrel #1 \over \longleftrightarrow}\,}}

\newcommand{\Z}{\mathbb{Z}}
\newcommand{\prob}{\mathbb{P}}
\newcommand{\edges}{\mathcal{E}^d}
\newcommand{\upiv}{\underline{\mathcal{P}}}
\newcommand{\opiv}{\overline{\mathcal{P}}}
\newcommand{\piv}{\mathcal{P}}
\newcommand{\clust}{\mathfrak{C}}

\title{Convergence of $k$-point functions in\\ high dimensional percolation}

\begin{document}
   \author{Shirshendu Chatterjee \thanks{Email: shirshendu@ccny.cuny.edu.} \\ \small{The City College of New York and CUNY Graduate Center}  \and Pranav Chinmay \thanks{Email: pchinmay@gradcenter.cuny.edu.} \\ \small{CUNY Graduate Center}  \and Jack Hanson \thanks{Email: jack.hanson@uni-hamburg.de.}\\ \small{Universit\"at Hamburg, The City College of New York, and CUNY Graduate Center} \and Philippe Sosoe \thanks{Email: ps934@cornell.edu.} \\ \small{Cornell University} }

\maketitle
\vspace{1em}
\begin{center}
\textit{Dedicated to the memory of Paul Koosis (1929-2025)}
\end{center}
\vspace{1em}
\begin{abstract}Consider critical Bernoulli percolation on $\mathbb{Z}^d$ for $d$ large; let $y_0, \dots, y_{k-1}$ be $k$ distinct points in $\mathbb{R}^d$. We prove that the probability that $\{\lfloor n y_i\rfloor\}_{i=0}^{k-1}$ all lie in the same open cluster, rescaled by an appropriate power of $n$, converges as $n \to \infty$ to an explicit constant. This confirms a conjecture of Aizenman and Newman.
\end{abstract}

\section{Introduction}
We study critical bond percolation on $\mathbb{Z}^d$, $d>6$, under the assumption that the scaling limit of the two-point function is known.
The object of this paper is to derive a limit for the rescaled $k$-point connectivity function 
\[\tau_{k}(x_0,\ldots,x_{k-1})=\mathbb{P}(x_0\lra x_1,\ldots,x_0\lra x_{k-1}),\]
in a regime where all $k$ vertices are at comparable (large) distances from each other. Our main result is as follows.
\begin{maintheorem} \label{thm:kpt}
Suppose $d>6$ and the limit \eqref{eqn: alphadef} exists. Letting $x_i^{(n)}=\lfloor ny_i\rfloor \in \mathbb{Z}^d$, where $y_0,\ldots,y_{k-1}\in \mathbb{R}^d$ are distinct points,
\begin{equation}
n^{-((4-d)(k-1)-2)}\tau_{k}(x_0^{(n)},\ldots, x_{k-1}^{(n)})\rightarrow\sum_{T\in \mathfrak{T}_{k}}\alpha^{2k-3}(2d \beta \rho)^{k-2}\mathcal{I}_{T}(y_0,\ldots, y_{k-1}),
\end{equation}
where
\begin{equation}\label{eqn: betadef}
    \beta=\frac{p_c}{1-p_c},
\end{equation}
with $p_c$ the critical probability, $\mathfrak{T}_k$ is the set of binary branching trees on $k$ leaves (as defined in Section \ref{sec:conntree}), and
\begin{equation}
\mathcal{I}_T(y_0,\ldots,y_{k-1}):=\int_{(\mathbb{R}^d)^{k-2}}\prod_{\{a,b\}\in E(T)} \frac{1}{|u_a-u_b|^{d-2}}\,\prod_{v\in \mathrm{int}(T)}\mathrm{d}u_v,
\end{equation}
the integration being over the interior vertices of the tree. The quantity $\alpha$ (defined in \eqref{eqn: alphadef}) is defined in terms of the two-point scaling, whereas $\rho$ (appearing in \eqref{eq:rhodef}) is defined in terms of three copies of the IIC process. The powers of $\alpha$ and $\rho$ correspond to the number of edges and degree 3 points of the graphs in $\mathfrak{T}_{k}$.
\end{maintheorem}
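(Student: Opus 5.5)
The proof goes by induction on $k$. The base case $k=2$ is the assumed two-point asymptotics \eqref{eqn: alphadef}, namely $\tau_2(0,x)\sim \alpha |x|^{2-d}$. The case $k=3$ carries the real content; for $k\ge 4$ the same decomposition is iterated along the branching tree.

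\textbf{Decomposition at a branch point.} For the three-point function $\tau_3(x_0,x_1,x_2)$, decompose the event $\{x_0\lra x_1, x_0\lra x_2\}$ according to a distinguished ``branch point''. I would define it through pivotal structure: the cluster containing the three points contains an edge $b=(u,u')$, or a small mesoscopic region around a vertex $u$, from which three edge-disjoint open paths emanate to $x_0,x_1,x_2$. The aim is to show
\[
\tau_3(x_0,x_1,x_2)\approx \sum_{u}\tau_2(x_0,u)\,\tau_2(u,x_1)\,\tau_2(u,x_2)\cdot \rho\cdot(\text{local factor}).
\]
The local factor, after summing over the edges adjacent to $u$ and accounting for the edge status, should give $2d\beta$. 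Here $\beta=p_c/(1-p_c)$ comes from the standard switching identity: opening a closed pivotal edge multiplies probability by $p_c/(1-p_c)$. The constant $\rho$ would arise as the limit, when the three arms are conditioned to reach far away, of the probability that three independent incipient infinite clusters attached at neighbouring sites do not intersect except at the branch point. This is exactly how \eqref{eq:rhodef} is phrased.

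\textbf{Rigorous steps for $k=3$.}
\begin{enumerate}
\item Write $\tau_3$ as an exact sum over the location of a first-pivotal-type edge, using a lace-expansion-like or BK/van den Berg--Kesten inclusion--exclusion. This expresses $\tau_3$ as a main term plus error terms bounded by tree-graph diagrams, in the style of Aizenman--Newman. In $d>6$ the error diagrams contain an extra triangle or bubble at the branch point. Each such error is $o(n^{(4-d)2-2})$ unless the loop is localized, and localized loops are precisely what get absorbed into $\rho$.
\item For $u$ at scale $n$, approximate the probability that three arms from near $u$ reach $x_0,x_1,x_2$ disjointly, conditionally on the local configuration. This should factor as $\tau_2\tau_2\tau_2$ times a local coupling constant. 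The constant is obtained via convergence of the arms, viewed from $u$, to IIC-type objects, using the existence of the IIC limit in high dimensions (assumed available via the two-point asymptotics).
\item Contributions from $u$ within distance $\varepsilon n$ of some $x_i$, or beyond $n/\varepsilon$, are negligible by the tree-graph bound $\sum_u |x_0-u|^{2-d}|u-x_1|^{2-d}|u-x_2|^{2-d}$ restricted to those regions.
\item A Riemann-sum argument then gives $n^{-(2(4-d)-2)}\sum_u \prod_i \alpha|x_i-u|^{2-d}\to \alpha^3\mathcal I_T$.
\end{enumerate}

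\textbf{Induction for general $k$.} Select the branch point adjacent to a pair of leaves, or more symmetrically decompose according to the full tree of branch points. The joint connection event then splits into a sum over skeleton trees $T\in\mathfrak T_k$ and interior positions $(u_v)$. Each tree contributes a product over its $2k-3$ edges of $\alpha|u_a-u_b|^{2-d}$ and a factor $2d\beta\rho$ per interior vertex. Degenerate trees, in which branch points coincide at a scale $o(n)$ or interior vertices have degree at least 4, contribute lower order by the diagrammatic bounds of Aizenman--Newman for $d>6$, where the triangle condition holds.

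\textbf{Main obstacle.} The hard part is step 2: proving that the local interaction at the branch point decouples from the macroscopic arms and converges to the three-IIC constant $\rho$. This requires showing that the three arms, conditioned to be disjoint near $u$ and to reach distance $n$, look locally like independent IICs, with errors summable against $|u-x_i|^{2-d}$. My first attempt would be a quasi-multiplicativity / mixing estimate: conditional on the configuration in a box of radius $m\ll n$ around $u$, the probability of connecting to the far points is $\tau_2(u,x_i)(1+o(1))$ uniformly. This would rely on the known one-arm and two-point asymptotics together with the gluing estimates for high-dimensional clusters. After that, let $m\to\infty$ to identify $\rho$.
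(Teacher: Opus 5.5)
\textbf{There is a genuine gap, at the step you flag as the main obstacle.} Your overall architecture matches the paper's: an exact sum over a pivotal edge at a branch point, $\beta$ from switching that edge and $2d$ from its orientations, negligible near-regime and non-binary trees via tree-graph and triangle-type diagrams, cherry reduction for $k\ge 4$, and a Riemann sum. But the tool you propose for the main obstacle would fail.

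The estimate ``conditional on the configuration in $B(u,m)$, the probability of connecting to $x_i$ is $\tau_2(u,x_i)(1+o(1))$ uniformly'' is false.
\begin{itemize}
\item It is $0$ on configurations where every edge at $u$ is closed.
\item Averaged over configurations with $u\lra\partial B(u,m)$, it equals $\tau_2(u,x_i)/\prob(u\lra\partial B(u,m))\ge c\,m^2\tau_2(u,x_i)$ by \eqref{eqn: KN}.
\end{itemize}
There is no uniform quasi-multiplicativity of this kind. What is true is IIC convergence, $\prob(E,\,u\lra x)/\tau(u,x)\to\nu_u(E)$ for cylinder events $E$, which describes the local configuration under a measure different from $\prob$. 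What the argument actually needs is the avoidance version \eqref{eqn: IIC-conv2}, giving $\prob(u\lra x\text{ off }\mathcal D)\sim\tau(u,x)\,\nu_u(\Xi_u(\mathcal D))$ for fixed finite $\mathcal D$. That is precisely how non-intersection probabilities of IICs, and hence $\rho$, enter.

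\textbf{You also never say how the three arms become independent.} In the paper this is exact, not a mixing estimate.
\begin{itemize}
\item One sums over the unique edge $g=\opiv(x_0,\{x_1,x_2\})$. An edge ``from which three edge-disjoint paths emanate'' is not unique, so summing over it overcounts.
\item Closing $g$ gives the exact identity of Lemma~\ref{lem:newcut}, with factor $\beta$. It turns the event into $\Gamma(\overline g,x_1,x_2)\cap\{\piv(\overline g,\{x_1,x_2\})=\varnothing\}$ together with $\{x_0\lra\underline g\text{ off }\clust(\overline g)\}$.
\item Sampling $\clust(\overline g)$ first makes the latter a connection in the complement of a fixed set.
\item $\clust(\overline g)$ is then truncated to $\clust_{B(\overline g;2K)}(\overline g)$ (Lemma~\ref{lem: GT-truncation}, using \eqref{eqn: KN}), so that \eqref{eqn: IIC-conv2} applies.
\end{itemize}
The branch region is also not a point. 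The remaining local factor is resolved by a second switching at the first pivotal $f$ of $\overline g\lra x_2$ (Lemma~\ref{beta-lem2}). The bubble $\overline g\Longleftrightarrow\underline f$ is localized by Lemmas~\ref{lem: bub-trunc} and~\ref{lem: double-connection-trunc}. This is why $\rho$ in \eqref{eq:rhodef} is a sum over $f$ weighted by $\mathbf 1_{0\Longleftrightarrow\underline f}$, rather than a single non-intersection probability at neighbouring sites.

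\textbf{Iterating for $k\ge 4$ needs inputs you omit.} After the cherry reduction, the arm from $\underline g$ is a $(k-1)$-point event with prescribed tree $\phi(T)$. You therefore need the local law near $\underline g$ under $\Gamma(\cdot)\cap\{\mathcal T=\phi(T)\}$ to be IIC, which is Lemma~\ref{lem:iicnew2}. That lemma requires the inductive lower bound \eqref{eqn: inductive-num-lwr-bd} on $\tau_{\phi(T)}$. You also need an a priori bound as in Lemma~\ref{lem:apriori}, both to kill the near-leaf regime and to dominate the Riemann sum.
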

The result in Theorem \ref{thm:kpt} has long been anticipated \cite[Sections 15.1-15.3]{HvH-book}. It was explicitly conjectured by Aizenman and Newman \cite[Eqn. 4.9]{AN}, who proved an upper bound for the $k$-point function in terms of a sum over binary branching tree graphs. It also represents a key step in understanding the large scale behavior of critical percolation processes in high dimensions. As such, it is an essential input for our upcoming study of the joint limit of distances in large clusters. We note that convergence of $k$-point functions, as well as a continuum scaling limit has also been independently announced by Blanc-Renaudie and Hutchcroft. See the references in \cite{H-LR1,H-ICM}.

The present work is the continuation of a series, started in \cite{CHS,CCHS1,CCHS2}, on the geometry of high-dimensional critical percolation. As in these previous works, our main result depends only indirectly on the lace expansion of Hara and Slade \cite{HS1,HS2}. To be precise, we treat the key estimates \eqref{eqn: HHS} and \eqref{eqn: alphadef}, whose proofs are obtained through the lace expansion, as black box results, but otherwise our derivations are independent of this tool. This is of special importance given the future prospect of an alternative derivation of the two-point function asymptotics offered by the recent emergence of new approaches to the high-dimensional regime,  see for example \cite{ASS,EGPS} and especially \cite{DCP}. In the context of critical \emph{oriented} percolation in high dimensions, the analogous result goes back to van der Hofstad and Slade \cite{HS-oriented}.

At a conceptual level, a central point is that the three-point function limit can be analyzed in terms of a measure absolutely continuous with respect to the triple product of the IIC measure. This is reflected in the ``vertex factor" $\rho$
whose importance was already highlighted by Aizenman-Newman, and which we identify in terms of (the triple product of) the IIC measure \eqref{eq:rhodef}.
The rest of the argument is an induction showing that the leading order behavior in the $k$-point function can be analyzed by repeatedly identifying triple points and falling back on the 3-point case.

Finally, we note that, as for our previous work in this series \cite{CCHS2}, we expect that the results of this paper extend in a straightforward manner to the case of spread-out percolation under only the dimensional condition $d>6$. Indeed, in this case the estimate \eqref{eqn: HHS} is known unconditionally. Moreover, the key convergence tool \cite{CCHS1} was proved also for this case.

\subsection{Overview of the proof}
 
The proof proceeds by induction on $k$, with the three-point function ($k = 2$) serving as the base case.  We outline the main steps.
 
\paragraph{Step 1: Identifying the branching structure.}  Given an outcome $\omega$ in which $x_0, \ldots, x_k$ are connected, we construct a canonical \emph{connectivity tree} $T(\omega)$ that records the minimal branching structure of the connections.  This tree has $k + 1$ leaves (the vertices $x_i$) and interior vertices corresponding to points where connections to different $x_i$ first diverge.  We show (Lemma~\ref{lem:binbran}) that with high probability, this tree is \emph{binary branching}: every interior vertex has in-degree exactly two.  Configurations producing non-binary trees involve additional coincidences of pivotal edges and contribute terms of strictly smaller order.  The key technical tool here is a set of diagrammatic estimates (Section~\ref{sec:diagrams}) showing that tree-shaped diagrams dominate, while diagrams containing cycles are suppressed by powers of $n$.
 
\paragraph{Step 2: Switching.}  Having identified a binary connectivity tree, we select two leaves $x_I, x_J$ sharing a common parent vertex $v$ and locate the \emph{last common pivotal edge} $g$ for the connections $\{x_0 \lra x_I\}$ and $\{x_0 \lra x_J\}$.  By a switching argument (Lemma~\ref{lem:newcut}), we express the probability of the original event (in which $g$ is open and serves as the pivotal) in terms of a modified event in which $g$ is closed.  This effectively ``removes'' the two leaves $x_I$ and $x_J$ from the tree, replacing them by the single vertex $\underline{g}$, and reduces the problem to the $(k-1)$-point function of the remaining vertices.
 
\paragraph{Step 3: Decoupling via the IIC.}  After switching, the event decomposes into two approximately independent parts: the connection from $x_0$ to the other vertices through a reduced tree (which involves the $(k-1)$-point function), and the connections from $x_I$ and $x_J$ to $\overline{g}$ (which contribute two-point function factors).  The key challenge is to make this decoupling rigorous.  We accomplish this by introducing the IIC measure $\nu$ as an intermediary.  After conditioning on the cluster of $\bar{g}$,  the connection from $x_0$ to $\underline{g}$ avoiding this cluster is then controlled using an enhanced version (Lemma~\ref{lem:iicnew2}) of our IIC convergence result. This shows that the law of the cluster near $g$ converges to the IIC measure even when conditioning on connections to multiple distant vertices simultaneously.
 
\paragraph{Step 4: Emergence of the vertex factor $\rho$.}  Once the decoupling is performed, the contribution of the branching event at $g$ is controlled by the quantity $\rho$ (Lemma~\ref{lem:genlem1}).  This quantity measures the probability, under three independent copies of the IIC based at nearby sites, that the three resulting infinite clusters avoid each other in a suitable sense.  The finiteness of $\rho$ follows from the two-point function bounds and is established in Lemma~\ref{lem:rhofinite}.
 
\paragraph{Step 5: Riemann sum approximation.}  After performing the inductive reduction, the sum over the position of $g$ takes the form of a Riemann sum approximation to the integral $I_T$.  The convergence of this sum to the integral follows from a priori bounds on $\tau_k$ providing the necessary dominating function.

\subsection{Organization of the paper}
 
Section~\ref{sec:definitions} collects definitions, notation, and the key input estimates (two-point function bounds, BK inequality, convolution estimates) used throughout the paper.  Section~\ref{sec: 3pt-scaling} establishes the base case of the induction by proving the convergence of the three-point function (Proposition~\ref{prop:threept}).  Section~\ref{sec:generalk} introduces the connectivity tree construction, proves that it is generically binary, develops the diagrammatic estimates needed to control error terms, and carries out the inductive step (Section \ref{sec:inductive}).  Sections~\ref{sec:aux} and~\ref{sec:iic} contain the auxiliary lemmas (switching, truncation, and bubble lemmas) and the enhanced IIC convergence results, respectively, that are invoked in the main argument.

\section{Definitions and Inequalities}\label{sec:definitions}
We generally let $\edges = \{ (x, y): \, \|x - y\|_1 = 1\}$ denote the set of \emph{directed} edges of $\Z^d$; this makes certain expressions involving connectivity events easier to express. This of course means that, almost surely, $(x, y)$ is open if and only if $(y, x)$ is open. When $e = (x,y)$ is an edge, we write $\underline{e} = x$ and $\overline{e} =y$.
We work on the canonical probability space $\{0, 1\}^{\edges}$ with its Borel sigma-algebra, writing $\omega$ for a generic outcome.

We write $B(R) = [-R, R]^d$ and, for general $x \in \Z^d$, we write $B(x; R) = [x-R, x+R]^d$. As above, we write $\{x \lra y\}$ for the event that $x$ and $y$ are connected by an open path, and we generalize this to multiple vertices in the natural way. We write $\{x \lra \infty\}$ for the event that $x$ is connected to infinitely many vertices by open paths. The next definition gives some convenient shorthand for these notions.

\begin{definition} \label{defin:gammadef}
    We introduce the notation $\Gamma(a_1, \dots, a_k)$ for the event $\{a_1 \lra a_2 \lra \dots \lra a_k\}$ that $a_1, \dots, a_k$ are in the same open cluster. We use the notation $\tau_k(a_1, \dots, a_k)$ for the $k$-point functions:
\[\tau_k(a_1, \dots, a_k) = \prob(\Gamma(a_1, \dots, a_k))\ . \]
We occasionally overload notation, writing in case $A = \{x_1, \dots, x_k\}$
\[\Gamma(A) = \Gamma(x_1, \dots, x_k), \]
or
\[\Gamma(z, A) = \Gamma(z, x_1, \dots, x_k)\ ,\]
with similar extensions for e.g.~$\tau_k(A)$.

The open cluster $\clust(x)$ is the connected component of $x$ in the random open subgraph of $\Z^d$. In other words, $\clust(x) = \{y: \, \Gamma(x,y) \text{ occurs}\}$. We also introduce the notion of ``restricted'' connections and clusters. If $A \subseteq \Z^d$, we write $\{x \sa{A} y\}$ for the event that $x$ and $y$ are connected by an open path lying entirely in $A$. We write $\clust_A(x) = \{y: \, x \sa{A} y\}$ for the connected component of $x$ in the open subgraph of $A$.

We write $x \Longleftrightarrow y$ for the event that there are two edge-disjoint open connections from $x$ to $y$, and we write $x \stackrel{A}{\Longleftrightarrow} y$ for the event that there are two edge-disjoint connections from $x$ to $y$ lying entirely in the set $A$.
\end{definition}

\begin{definition}
\label{defin:pivs}
        Let $\mathcal{P}(u,v)$ denote the set of open pivotals for the event $\{ u \leftrightarrow v \}$: open edges through which every open connection from $u$ to $v$ must pass.  We consider these as directed edges in the natural way: if $\{x, y\}$ is an undirected edge traversed by every open path from $u$ to $v$, then either all these paths traverse $\{x,y\}$ from $x$ to $y$ (in which case $(x,y) \in \piv(u,v)$) or all these paths traverse $\{x,y\}$ from $y$ to $x$ (in which case $(y,x) \in \piv(u,v)$). Similarly, these pivotals can be naturally ordered in terms of ``distance'' to $u$: if $\gamma_1, \gamma_2$ are any (ordered) open paths from $u$ to $v$, the elements of $\mathcal{P}(u, v)$ appear in the same relative order along $\gamma_1$ as along $\gamma_2$. We use this ordering implicitly several times in what follows, saying ``closer to'' or ``further from'' $u$ to mean with respect to this order and not with respect to (e.g.) the Euclidean distance. We extend this notion for pivotals from $u$ to a set $A$ of vertices in the natural way.
        
        Given a vertex set $A$, we define $\underline{\mathcal{P}}(u, A)$ to be the common open pivotal which is closest to $u$ for all the connections of the form $u \lra x$, $x \in A$. We define $\overline{\mathcal{P}}(u, A)$ to be the common pivotal furthest from $u$. In case there are no common pivotals, we write $\underline{\mathcal{P}}(u, A) = \overline{\mathcal{P}}(u, A) = \varnothing$.
        \end{definition}

A central role in this paper will be played by the incipient infinite cluster (IIC) measure first constructed in \cite{vdHJ}:
\begin{definition}
    Let $\mathbf{e}_1=(1,0,\ldots,0)$. The Incipient Infinite Cluster measure $\nu_x$ is defined by
    \begin{equation}\label{eqn: IIC-conv}
    \nu_x = \lim_{n \to \infty} \prob(\cdot \mid x \lra  n \mathbf{e}_1) \ ,
    \end{equation}
    where the limit is in the weak sense. We write $\nu = \nu_0$.
    In \cite{CCHS1}, it was shown that the point-to-point conditioning of \eqref{eqn: IIC-conv} may be replaced by conditioning on any long open connection from $x$. If $(V_n)$ and $(\mathcal{D}_n)$ are sequences of subsets of $\Z^d$ with $\limsup_n V_n = \limsup_n \mathcal{D}_n = \varnothing$, and if $x$ and $V_n$ are in the same connected component of $\Z^d \setminus \mathcal{D}_n$ for all $n$, we have
    \begin{equation}
        \label{eqn: IIC-conv2}
        \nu_x = \lim_{n \to \infty} \prob\left( \cdot\  \left|\, x \sa{\Z^d \setminus \mathcal{D}_n} V_n\right.\right)\ .
    \end{equation}
    
    The cluster of $x$ under $\nu_x$, which is a.s.~the unique infinite open cluster, is denoted by $W_x$, or $W$ when there is no ambiguity. The cluster $W$ almost surely has a single topological end. We often consider up to three independent copies of the IIC, which we decorate with tildes: $\nu_x$, $\wto{\nu}_y$, and $\dwt{\nu}_z$. The infinite clusters corresponding to $\wto{\nu}_y$ and $\dwt{\nu}_z$ will be denoted by $\wto{W}$ and $\dwt{W}$ (respectively).
\end{definition}

\begin{definition}
For an arbitrary deterministic set $\mathcal{D} \subseteq \Z^d$, we define
\begin{equation}\label{eqn: XiDdef}
    \Xi_{v}(\mathcal{D}) = \{v \sa{\Z^d \setminus \mathcal{D}} \infty \}
\end{equation} 
the event that the site $v$ is not in a finite component when the IIC $W_v$ is cut by the set $\mathcal{D}$.     
\end{definition}

\begin{definition}
    Our results involve the ``vertex factor", an averaged nonintersection probability under three independent copies of the IIC measure $\nu$ based at different sites. We define this quantity as
    \begin{equation}
    \label{eq:rhodef}
\rho := \sum_{f \in \edges}  \mathbb{E}_{\nu_0}\big[\wto{\nu}_{\mathbf{e}_1}\big(\Xi_{\mathbf{e}_1}\big(\clust\big(W_0 \cup \{\overline{f}\}\big)\big) \big) \dwt{\nu}_{\overline{f}}\big(\Xi_{\overline{f}}(W_0)\big) \, \mathbf{1}_{ 0 \sda{} \underline{f}} \big]
\end{equation}
Letting $W_0$, $\wto{W}_{\mathbf{e}_1}$ and $\dwt{W}_{\overline{f}}$, denote independent IICs centered at $0$, $\mathbf{e}_1$ and $\overline{f}$, respectively, we define the events
\begin{align*}
E_1&=\left\{0 \Leftrightarrow \underline{f} \text{ in } W_0 \right\},\\
E_2&= \big\{\mathbf{e}_1 \sa{\Z^d \setminus \clust(W_0 \cup \overline f)} \infty \text{ in } \wto{W}_{\mathbf{e}_1} \big\},\\
E_3&=\big\{\overline{f} \sa{\Z^d \setminus W_0 } \infty \text{ in }\dwt{W}_{\overline{f}} \big\}.
\end{align*}
Then $\rho$ in \eqref{eq:rhodef}     equals
\begin{equation}
\rho =\sum_{f \in \edges} (\nu_0 \otimes \widetilde{\nu}_{\mathbf{e}_1} \otimes \widetilde{\widetilde{\nu_{\overline{f}}}})\left[ E_1 \cap  E_2 \cap E_3  \right].
\end{equation}
In the high-dimensional setting of this paper, $\rho < \infty$; this is shown  in the conclusion of this section at Lemma~\ref{lem:rhofinite}.
\end{definition}

\subsection{Inequalities and Estimates}
Suppose $A \subseteq \{0, 1\}^{\edges}$ is an arbitrary event and $\omega = (\omega_e)$ an arbitrary outcome. For each finite $I \subseteq \edges$, we say that $A$ \emph{occurs on $I$} in $\omega$ if $\omega' \in A$ for each $\omega'$ agreeing with $\omega$ on the indices of $I$ (i.e., all edges of $I$ that are open [resp.~closed] in $\omega$ are open [resp.~closed] in $\omega'$).
Given events $A_1, \dots, A_k$, we write $A_1 \circ \dots \circ A_k$ for the set of outcomes $\omega$ in which $A_1, \dots, A_k$ \emph{occur disjointly}: there exist disjoint finite $I_1, \, \dots, \, I_k \subseteq \edges$ such that $A_1$ occurs on $I_1$ in $\omega$, $A_2$ occurs on $I_2$ in $\omega$, and so on.

When $A_1 \circ \dots \circ A_k$ is measurable (that is, when it is an event), the following inequality of van den Berg, Kesten, and Reimer (``BK inequality'') holds \cite{AGH}:
\begin{equation}
    \mathbb{P}(A_1 \circ A_2 \circ \dots \circ A_k)\le \prod_{i=1}^k \prob(A_i).\label{eqn: BK}
\end{equation}
Any time this inequality is applied, the measurability of $A_1 \circ \dots \circ A_k$ will be clear, and hence we will not generally comment on it.

We use the so-called ``Japanese bracket" notation:
\[\langle x\rangle:= (1+|x|^2)^{\frac{1}{2}}.\]
Then $|x|/\langle x\rangle\rightarrow 1$ at infinity but $\langle x\rangle\ge 1$ does not vanish at 0.

\subsection{Two-point function}
All new results of this paper assume as input, in addition to the inequality $d > 6$ the following asymptotic result on the two-point function: there exist $c, C > 0$ such that, for all $x, y \in \Z^d$,
\begin{equation}\label{eqn: HHS}
    c \langle x-y\rangle^{-d+2} \leq \tau(x,y):=\mathbb{P}(x\leftrightarrow y) \leq C \langle x-y\rangle^{-d+2}.
\end{equation}
In fact, in many places we also crucially use the existence of the limit
\begin{equation}\label{eqn: alphadef}
\alpha:=\lim_{n\rightarrow \infty} n^{d-2}\mathbb{P}(0\leftrightarrow n\mathbf{e}_1)\ ,
\end{equation}
known to hold in the same settings as \eqref{eqn: HHS}. The original proofs of both results are due to Hara \cite{Hara} in the case of nearest-neighbor percolation in sufficiently high dimensions and Hara, van der Hofstad, and Slade for the spread-out model when $d > 6$. Both \eqref{eqn: HHS} and \eqref{eqn: alphadef} are now known to hold when $d\ge11$ \cite{FH}. 

We also repeatedly use the following one-arm probability bound of Kozma and Nachmias \cite{KN}:
\begin{equation}\label{eqn: KN}
\mathbb{P}(0\leftrightarrow \partial B(0,R))\le CR^{-2},       
\end{equation}
for $R\in \mathbb{Z}_+$. This result is known to hold for $d > 6$ whenever \eqref{eqn: HHS} also holds. See \cite{ASS,EGPS} for alternative proofs.

We repeatedly use the following simple estimate for convolutions of power-type (Riesz) kernels: there is a constant $C=C(d)>0$ such that if $\alpha, \beta>0$, $\alpha+\beta<d$
\begin{equation}\label{eqn: std-conv}
\sum_{z\in\mathbb{Z}^d}\langle x-z\rangle^{-d+\alpha}\langle z-y\rangle^{-d+\beta}\le C(d,a,b)\langle x-y\rangle^{-d+\alpha+\beta}.
\end{equation}
See \cite[Proposition 1.7]{HHS}. 

If $\alpha=0$, $0<\beta<d$, then
\begin{equation}\label{eqn:log-conv}
\sum_{z\in\mathbb{Z}^d}\langle x-z\rangle^{-d}\langle z-y\rangle^{-d+\beta}\le C(d,\beta)\langle x-y\rangle^{-d+\beta}\log \langle x-y\rangle.
\end{equation}
\begin{proof}[Proof of \eqref{eqn:log-conv}]
By translation, we can assume $x=0$. Let $R=|y|$. The sum over far vertices is
\begin{align*}
    \sum_{|z|>2R} \langle z\rangle^{-d}\langle z-y\rangle^{-d+
    \beta}&\le C\sum_{|z|>2R} \langle z\rangle^{-2d+\beta}\\
    &\le \langle x-y\rangle^{-d+\beta}.
\end{align*}
If $(1/2)R<|z|\le R$, we have
\begin{align*}
    \sum_{(1/2)R<|z|\le 2R} \langle z\rangle^{-d}\langle z-y\rangle^{-d+
    \beta}&\le CR^{-d}\sum_{(1/2)R<|z|\le 2R} \langle z\rangle^{-d+\beta}\\
    &\le C(\beta,d)R^{-d+\beta} \\
    &\le C(\beta,d)\langle x-y\rangle^{-d+\beta}.
\end{align*}
Finally, when $|z|<(1/2)R$, we have $|z-y|\ge (1/2)R$, so the contribution from this region is
\[R^{-d+\beta}\sum_{|z|\le (1/2)R} \langle z\rangle^{-d}\le C\frac{\log R}{R^{d-\beta}}=C \frac{\log \langle x-y\rangle}{\langle x-y\rangle^{d-\beta}}.\]
\end{proof}

We also use an estimate for sums over triple products: for $d>4$, we have
\begin{equation}
    \label{eqn: triple-sum}
    \sum_{z\in \mathbb{Z}^d}\langle x-z\rangle^{-d+2}\langle z-y\rangle^{-d+2}\langle z-w\rangle^{-d+2}\le C(d)\sum_{\substack{\text{cyclic over}\\ x,y,w}}\frac{\min\{\langle x-y\rangle, \langle x-w\rangle\}^2}{\langle x-y\rangle^{d-2}\langle x-w\rangle^{d-2}}.
\end{equation}
\begin{proof}[Proof of \eqref{eqn: triple-sum}]
It suffices to bound the sum over the region 
\[A=\{z: |z-x|< |z-y|, |z-x|<|z-w|\}.\] The other two regions are handled identically by cyclic permutation of the variables.

Let 
\[r:=\min \{|x-y|,|x-w|\}.\]
and
\[B:=\{z: |z-x|<\frac{1}{2}r\}.\]
Note that on $B$,
\[|x-y|\le |z-x|+|z-y|\le 2|z-y|,\]
so
\[|z-y|\ge\frac{1}{2}|x-y|\]
and similarly
\begin{equation}\label{eqn: small-c}
|z-w|\ge\frac{1}{2}|x-w|,
\end{equation}
so we have
\begin{align*}
&\sum_{z\in A\cap B}\langle x-z\rangle^{-d+2}\langle z-y\rangle^{-d+2}\langle z-w\rangle^{-d+2}\\
\le&~C\langle x-y\rangle^{-d+2}\langle x-w\rangle^{-d+2}\sum_{z\in B}\frac{1}{\langle x-z\rangle^{d-2}}\\
\le&~Cr^2 \langle x-y\rangle^{-d+2}\langle x-w\rangle^{-d+2}.
\end{align*}
To bound the sum on $B^c$, assume for the sake of clarity and without loss of generality that $r=|x-y|$. Then, using $|z-y|>|z-x|$ and \eqref{eqn: small-c}, we have
\begin{align*}
&\sum_{z\in A\cap B^c}\langle x-z\rangle^{-d+2}\langle z-y\rangle^{-d+2}\langle z-w\rangle^{-d+2}\\
\le& C\langle x-w\rangle^{-d+2}\sum_{z: |z-z|>r}\frac{1}{\langle z-z\rangle^{d+(d-4)}}\\
\le& Cr^2 \langle x-y\rangle^{-d+2}\langle x-y\rangle^{-d+2}.
\end{align*}
\end{proof}

We conclude this section by showing that the quantity $\rho$ is finite under the assumption \eqref{eqn: HHS}.
\begin{lemma}
    \label{lem:rhofinite}
    Letting $\rho$ be as defined above at \eqref{eq:rhodef}, we have $\rho < \infty$.
\end{lemma}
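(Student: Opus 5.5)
We bound each summand in \eqref{eq:rhodef} by dropping the non-intersection events $E_2,E_3$ and keeping only $E_1$. Both $\wto{\nu}_{\mathbf{e}_1}(\cdot)$ and $\dwt{\nu}_{\overline f}(\cdot)$ are at most $1$, so
\[
\rho \le \sum_{f\in\edges} \nu_0\big(0 \Longleftrightarrow \underline f\big) \le 2d \sum_{u\in\Z^d} \nu(0\Longleftrightarrow u).
\]
The claim therefore reduces to showing that the expected number of vertices doubly connected to the origin in the IIC is finite. Intuitively this holds because the IIC has a single end, so the doubly connected part of $W_0$ (the part not separated from $0$ by a pivotal) should be a finite ``bubble'' near the origin with summable tail.

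To prove summability we use the IIC limit \eqref{eqn: IIC-conv}:
\[
\nu(0\Longleftrightarrow u) = \lim_{n\to\infty}\frac{\prob(0\Longleftrightarrow u,\ 0\lra n\mathbf{e}_1)}{\tau(0,n\mathbf{e}_1)}.
\]
The event $\{0\Longleftrightarrow u\}$ depends on infinitely many edges, so we first approximate it from below by its restriction to a large finite box and pass to the limit by monotone convergence. Fix $u$ and $x=n\mathbf{e}_1$ with $n\gg |u|$. On the event $\{0\Longleftrightarrow u,\ 0\lra x\}$ we take the two edge-disjoint paths between $0$ and $u$ together with a path to $x$. The path to $x$ leaves the union of the two loop paths at some last vertex $z$. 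A standard tree-graph decomposition with the BK inequality \eqref{eqn: BK} then bounds the probability by
\[
\sum_{z} \tau(0,z)\,\tau(z,u)\,\tau(0,u)\,\tau(z,x)\ +\ \sum_{z}\tau(0,u)\,\tau(u,z)\,\tau(z,0)\,\tau(z,x),
\]
up to symmetric variants. In each configuration the loop through $0$ and $u$ contributes two disjoint connections and the exit point $z$ lies on one of them.

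We bound $\tau(z,x)\le C\langle x-z\rangle^{2-d}$ and divide by $\tau(0,x)\asymp n^{2-d}$ using \eqref{eqn: HHS}. For $|z|\le n/2$ this ratio is bounded. Summing over $u$ first and then over $z$, the main term becomes
\[
\sum_{u,z}\langle z\rangle^{2-d}\langle z-u\rangle^{2-d}\langle u\rangle^{2-d},
\]
which is the triangle diagram evaluated at the origin. It is finite for $d>6$ by two applications of \eqref{eqn: std-conv}: the first convolution gives $\langle u\rangle^{4-d}$, and $\sum_u \langle u\rangle^{4-d}\langle u\rangle^{2-d}<\infty$ since $2d-6>d$.

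The region $|z|>n/2$ needs separate care. There we have $\tau(0,z)\lesssim n^{2-d}$, and we have to check that the sum over $z$ of $\tau(0,z)\tau(z,u)\tau(z,x)$, after division by $n^{2-d}$, is still summable in $u$. Using \eqref{eqn: std-conv} and \eqref{eqn: triple-sum}, this contribution is bounded by $C\langle u\rangle^{2-d}\cdot\langle u\rangle^{2-d}$ times a factor uniformly bounded in $n$, which is summable. This gives a bound uniform in $n$, and Fatou's lemma together with the box approximation gives $\sum_u \nu(0\Longleftrightarrow u)<\infty$.

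The main obstacle is the diagrammatic bound: one must write down a correct BK decomposition of $\{0\Longleftrightarrow u\}\cap\{0\lra x\}$ into disjointly occurring connections, handling the measurability and finite-volume issues. If a direct decomposition proves awkward, we would instead use that $0\Longleftrightarrow u$ implies $\{0\lra u\}\circ\{0\lra u\}$, intersected with $\{0\lra x\}$. We would then apply the standard ``three disjoint arms from a branch point'' bound: at least one of the two disjoint connections is met by the path to $x$ at some $z$, and BK applies to $\{0\lra z\}\circ\{z\lra u\}\circ\{0\lra u\}\circ\{z\lra x\}$, up to relabeling which loop contains $z$.
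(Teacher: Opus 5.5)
Your proposal is correct and is essentially the paper's argument: drop $E_2,E_3$ to reduce to $\sum_u \nu(0\Longleftrightarrow u)<\infty$, pass through the measure conditioned on $\{0\lra n\mathbf{e}_1\}$ with a finite-box approximation, and apply BK at the last vertex $z$ where the path to $n\mathbf{e}_1$ leaves the two edge-disjoint $0$--$u$ paths, which gives the same diagram $\langle u\rangle^{2-d}\cdot\langle u\rangle^{4-d}$. The one inaccuracy is in your far-region claim: for $|u|\ll n$ the $|z|>n/2$ contribution is of order $\langle u\rangle^{2-d}n^{4-d}$, which is not $\lesssim\langle u\rangle^{4-2d}$ uniformly in $n$, but it tends to $0$ for each fixed $u$ (and is $O(n^{6-d})$ even summed over all $u$), so nothing breaks; the paper avoids this near/far split by fixing $u$, taking $n\ge 2|u|$, and applying \eqref{eqn: triple-sum} to the whole $z$-sum to get $\nu(0\Longleftrightarrow u)\le C\langle u\rangle^{6-2d}$.
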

\begin{proof}
    Since the IIC probabilities appearing in the definition $\rho$ are of course bounded by one, the lemma will be proved once we show
    \begin{equation}
        \label{eq:finitebubb}
        \sum_{u \in \Z^d} \nu(0 \Longleftrightarrow u) < \infty\ .
    \end{equation}
    In turn, \eqref{eq:finitebubb} follows immediately from the fact that there is a  $C > 0$ such that
    \begin{equation}
        \label{eq:finitebubb2}
        \nu(0 \Longleftrightarrow u) \leq C \langle u\rangle^{6-2d}\ .
    \end{equation}
    We show \eqref{eq:finitebubb2} for completeness.

    We write
    \begin{align*}
        \nu(0 \Longleftrightarrow u) &= \lim_{K \to \infty} \nu\big(0 \stackrel{B(K)}{\Longleftrightarrow} u \big)\\
        &= \lim_{K \to \infty} \lim_{n \to \infty} \prob\big(0 \stackrel{B(K)}{\Longleftrightarrow} u \mid 0 \lra n e_1 \big)\\
        &\leq \limsup_{n \to \infty} \prob\left(0 \Longleftrightarrow u \mid 0 \lra n e_1 \right)\ ,
    \end{align*}
    and so \eqref{eq:finitebubb2} follows once we show
    \begin{equation}
        \label{eq:finitebubb3}
        \prob\left(0 \Longleftrightarrow u, \, 0 \lra n e_1 \right) \leq C n^{2-d} \langle u\rangle^{6-2d} \quad \text{for $n \geq 2 |u|$}.
    \end{equation}

    To see \eqref{eq:finitebubb3}, consider an outcome in the event appearing there. Let $\gamma_1$ and $\gamma_2$ be edge-disjoint witnesses for $\{0 \lra u\}$. Considering the final vertex $v$ of an open path witnessing $\{0 \lra n e_1\}$ which lies in $\gamma_1 \cup \gamma_2$, we see
    \begin{align*}
        \prob\left(0 \Longleftrightarrow u, \, 0 \lra n e_1 \right)  &\leq \sum_v \tau(0, v) \tau(v, u) \tau(0, u) \tau(v, ne_1)\\
        &\leq C \langle u\rangle^{2-d} \sum_v \langle v\rangle^{2-d} \langle v - ne_1\rangle^{2-d} \langle u-v\rangle^{2-d}\ ,
    \end{align*} 
    and so \eqref{eq:finitebubb3} and hence the desired result follows from
    \begin{equation}
        \label{eq:finitebubb4} 
        \sum_v \langle v\rangle^{2-d} \langle v - ne_1\rangle^{2-d} \langle u-v\rangle^{2-d} \leq C n^{2-d} \langle u\rangle^{4-d} \quad \text{for $n \geq 2 |u|$},
    \end{equation}
    which in turn follows from \eqref{eqn: triple-sum}.
    
\end{proof}

\section{Three-point function scaling}\label{sec: 3pt-scaling}
We begin by proving the $k = 2$ case of Theorem~\ref{thm:kpt}; in other words, establishing the scaling of the three-point function. This will provide us with the base case for an inductive argument which will show the general version of Theorem~\ref{thm:kpt}; see Section~\ref{sec:generalk} below. It also provides us an opportunity to develop our arguments in the simplest possible setting, which will make the structure of the general argument more transparent.

By translation-invariance, it suffices to consider
\[ \tau_3(0, x_1, x_2) = \mathbb{P}(0 \leftrightarrow x_1 \lra x_2). \]
For legibility, we organize our claim about $\tau_3$ into its own proposition:
\begin{proposition}\label{prop:threept}
Let $x_1 = \lfloor n y_1 \rfloor $ and $x_2 = \lfloor n y_2 \rfloor$ for $0, y_1, y_2$ distinct elements of $\mathbb{R}^d$. 
Recalling our definitions \eqref{eqn: alphadef} of $\alpha$, \eqref{eqn: betadef} of $\beta$, and \eqref{eq:rhodef} of $\rho$ above,
we have
\[ \lim_{n \rightarrow \infty} n^{2d-6}\mathbb{P}(0 \leftrightarrow x_1, x_2) = 2d \alpha^3 \beta \rho \int_{\mathbb{R}^d}\frac{1}{|x|^{d-2}}\frac{1}{|x-x_1|^{d-2}}\frac{1}{|x-x_2|^{d-2}}\,\mathrm{d}x . \]
\end{proposition}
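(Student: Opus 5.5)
We decompose the event $\{0\lra x_1, x_2\}$ according to the branching edge $g=\opiv(0,\{x_1,x_2\})$, the last common open pivotal for the connections $0\lra x_1$ and $0\lra x_2$. The first step is to show that the contribution from $\opiv(0,\{x_1,x_2\})=\varnothing$, or from $g$ within distance $o(n)$ of one of $0,x_1,x_2$, is $o(n^{6-2d})$. Without a common pivotal, there are two edge-disjoint paths from $0$ to the branching region. Consequently a union bound over the branch point gives a diagram with an extra bubble. By BK \eqref{eqn: BK}, \eqref{eqn: HHS} and \eqref{eqn: triple-sum}, that diagram is of lower order. This is the step we would carry out with the most care near the endpoints, since it requires a short localization: the branch point must be at distance $\gg 1$ from the ends, and the part of the configuration near $0$ is arbitrary. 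For the main term we write
\[
\prob(0\lra x_1,x_2)\approx \sum_{g}\prob\big(g=\opiv(0,\{x_1,x_2\})\big),
\]
with $g$ ranging over edges at distance of order $n$ from $0,x_1,x_2$.

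\textbf{Switching.} For a fixed $g$, $g$ being open and pivotal forces the event $\{0\lra \underline g\}$ to occur off the cluster of $\overline g$ once $g$ is closed. Moreover, $\overline g$ branches: its connections to $x_1$ and $x_2$ have no common pivotal after $g$. Flipping the status of $g$ costs a factor $p_c/(1-p_c)=\beta$. We get
\[
\prob(g=\opiv)=\beta\,\prob\big(\underline g\sa{\Z^d\setminus \clust(\overline g)}0,\ \overline g\lra x_1,x_2 \text{ with no common pivotal},\ g \text{ closed}\big).
\]
Next we condition on $\clust(\overline g)$ restricted to avoid $g$. On the event $\overline g \lra x_1, x_2$ with no common pivotal, we locate the first edge $f$ with $\overline g\Longleftrightarrow\underline f$, after which the paths to $x_1$ and $x_2$ split. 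The triple $(\underline g,\overline g,f)$ is local. Three long arms emanate from it: to $0$ avoiding $\clust(\overline g)$, and from $\overline g$ and $\overline f$ towards $x_1,x_2$, with the $x_2$ arm avoiding the cluster used by the $x_1$ arm. Summing over $f$ and over the $2d$ directions of $g$ (translating so that $\overline g=0$ and $\underline g=\mathbf e_1$ up to symmetry) reproduces the structure of $\rho$ in \eqref{eq:rhodef}.

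\textbf{Decoupling via the IIC; main obstacle.} We must show that
\[
\prob(\cdots)=\tau(\underline g,0)\,\tau(\overline g,x_1)\,\tau(\overline f,x_2)\,\big(\text{local IIC factor}+o(1)\big),
\]
which is the expected main difficulty. We would proceed sequentially. First, condition on $\overline g\lra x_1$ and use \eqref{eqn: IIC-conv2} to replace the local law near $\overline g$ by $\nu_{\overline g}$. Next, for the arm from $\overline f$ to $x_2$ avoiding $W$, use \eqref{eqn: IIC-conv2} with $\mathcal D_n$ equal to the cluster of $\overline g$ truncated in a box. The conditional probability of a long arm avoiding a local set factors as $\tau\cdot \dwt\nu(\Xi(\cdot))$, producing $E_3$. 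Similarly, the arm $\underline g\to 0$ avoiding $\clust(W\cup\overline f)$ produces $E_2$. The technical input is a truncation argument: replace the clusters by their intersections with $B(\overline g;K)$, then show that the far parts rarely intersect the other arms. For this we use bubble-type bounds from \eqref{eqn: triple-sum} and \eqref{eqn: KN}, uniformly in $n$, before sending $K\to\infty$. Tightness of $f$ near $g$ is handled by \eqref{eq:finitebubb2}, which gives a summable dominating bound.

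\textbf{Riemann sum.} Finally, \eqref{eqn: alphadef} gives $\tau(a,b)\sim\alpha|a-b|^{2-d}$ for $|a-b|\asymp n$. Summing over $g=\lfloor nx\rfloor$ turns
\[
n^{2d-6}\sum_g \alpha^3 |g|^{2-d}|g-x_1|^{2-d}|g-x_2|^{2-d}
\]
into the integral in the statement, with total constant $2d\alpha^3\beta\rho$. Dominated convergence applies via the a priori bound $\prob(g=\opiv)\le C\tau(0,g)\tau(g,x_1)\tau(g,x_2)$ from BK. The regions near the singularities are negligible by the first step, since the integral converges there because $d-2<d$.
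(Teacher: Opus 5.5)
Your proposal is correct in outline and is essentially the paper's proof. The shared steps are: BK for the no-common-pivotal case and for $g$ near $0,x_1,x_2$ or near infinity; switching at $g$ (Lemma~\ref{lem:newcut}); box truncation plus IIC convergence for arms avoiding a local set; decomposition over the first pivotal $f$ after $\overline g$ to produce $\rho$ (Lemma~\ref{lem:genlem1}); and a Riemann sum, where your dominated convergence is equivalent to the paper's cutoff $F(\epsilon,n)$. Two small corrections there: the no-common-pivotal case needs no localization, since it directly gives $\{0\lra x_1\}\circ\{0\lra x_2\}$ and hence $O(n^{4-2d})$; and tightness in $f$ needs the uniform-in-$n$ bound of Lemma~\ref{lem: bub-trunc}, not just the IIC bound \eqref{eq:finitebubb2}.

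One point you should track explicitly is the power of $\beta$. The edge $f$ is open in your event, while $E_1\cap E_3$ in \eqref{eq:rhodef} force it closed, so reaching $\rho$ takes a second switching (Lemma~\ref{beta-lem2}); Lemma~\ref{lem:genlem1} accordingly gives the local factor as $\beta\rho$. Your sketch, like the paper's display \eqref{eq:genlempreview}, drops this extra $\beta$ when arriving at $2d\alpha^3\beta\rho$.
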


Our argument will be broken into several pieces in the subsections below. To illuminate the structure of the proof, we organize several technical pieces of the argument into lemmas which will be fully presented and proved in Sections~\ref{sec:aux} and \ref{sec:iic}. The versions of these lemmas we will invoke while proving Proposition~\ref{prop:threept} are independent of that proposition and its proof. The lemmas are written in a general form; this allows us to apply them to the case of the general $k$-point functions in the proof of Theorem~\ref{thm:kpt}.

To orient the reader, we provide here a summary of places where the lemmas of Sections~\ref{sec:aux} and \ref{sec:iic} are invoked. At~\eqref{eq:truncpreview}, we use Lemma~\ref{lem:newcut} to express the impact of closing a pivotal edge. At \eqref{eqn: PL}, we apply Lemma~\ref{lem: GT-truncation} to approximate an event that nearby clusters do not intersect by a cylinder event. At \eqref{eq:genlempreview}, we apply the result of Lemma~\ref{lem:genlem1}.

We refer to  our enhanced IIC convergence result, Lemma~\ref{lem:iicnew2} at \eqref{eq:pig} to make the parallels with the $k > 3$ arguments clear. However, as noted at \eqref{eq:pig}, the existing IIC result \eqref{eqn: IIC-conv2} also suffices in the case $k = 3$.
\subsection{Preliminary steps}
Before beginning the proof, we slightly rephrase the claim of Proposition~\ref{prop:threept}. The integrals appearing in the proposition's statement naturally arise from a sum over the location where connections from $0$ to $x_1$ and from $0$ to $x_2$ branch. To make this precise, we first ensure that a pivotal edge exists at which such branching occurs. Recalling Definition~\ref{defin:pivs}, we note that when $\{0 \lra x_1\}$ and $\{0 \lra x_2\}$ occur but there is no common pivotal, then in fact $\{0 \lra x_1 \} \circ \{0 \lra x_2\}$ occurs. The BK inequality then shows
\begin{equation}\label{eqn: piv-exists}
\prob\left(0 \lra x_1 \lra x_2, \quad \opiv(0, \{x_1, x_2\}) = \varnothing\right) \leq C n^{4-2d}
\end{equation}
for a $C = C(y_1, y_2)$. 

Thus, Proposition~\ref{prop:threept} will follow once we have shown
\begin{equation}
    \label{eq:threept2}
    \lim_{n \rightarrow \infty} n^{2d-6} \sum_{g \in \edges} \mathbb{P}(0 \leftrightarrow x_1, x_2 ;\, g = \opiv(0, \{x_1, x_2\}) ) = 2d \alpha^3 \beta \rho \int_{\mathbb{R}^d}\frac{1}{|x|^{d-2}}\frac{1}{|x-x_1|^{d-2}}\frac{1}{|x-x_2|^{d-2}}\,\mathrm{d}x .
\end{equation}
To show \eqref{eq:threept2} we use the fact, presented as Lemma~\ref{lem:newcut} below, that we may re-express each term in the sum on the left-hand side of \eqref{eq:threept2}:
\begin{equation}
\label{eq:truncpreview}
\begin{split}
             \mathbb{P}(0 \leftrightarrow x_1, x_2 ;\, g = \opiv(0, \{x_1, x_2\}) ) = \beta \mathbb{P}(0\leftrightarrow \underline{g}\,\text{off}\,\clust(\overline{g}),\Gamma(\bar{g},x_1,x_2), \mathcal{P}_{\bar{g}}).
            \end{split}
\end{equation}
Here, for each $v \in\mathbb{Z}^d$, we introduced shorthand for the event that $\upiv(v, \{x_1, x_2\})$ does not exist:
\begin{equation}
\label{eq:mathcalpsubv}
\mathcal{P}_v=\{\mathcal{P}(v,x_1)\cap \mathcal{P}(v,x_2) = \varnothing\} = \{\mathcal{P}(v,\{x_1, x_2\}) = \varnothing\}.
\end{equation}

We make another adjustment to \eqref{eq:threept2} before proceeding with the proof. It will be helpful for $g$ to be ``macroscopically'' far away from $0$, $x_1$, and $x_2$. For fixed $y_1, y_2$, we define
\begin{equation}
    \label{eq:farregime}
    F(\epsilon, n):= \{g \in \edges:\, \min\{|\underline{g}|,|x_1-\underline{g}|,|x_2-\underline{g}|\} \geq \epsilon n, \, |\underline{g}| \leq \epsilon^{-1} n\}\ .
\end{equation}
This represents the edges which are ``far'' from $0$, $x_1$, and $x_2$, as well as ``from infinity''.

We show that the sum in \eqref{eq:threept2} can practically be taken over $g \in F(\epsilon, n)$ for small $\epsilon$. This is the content of the following lemma:
\begin{lemma}
    \label{lem:nearregime}
    \[\lim_{\epsilon \to 0}\limsup_{n \rightarrow \infty} n^{2d-6} \sum_{g \notin F(\epsilon, n)} \mathbb{P}(0 \leftrightarrow x_1, x_2 ;\, g = \opiv(0, \{x_1, x_2\}) ) =0\ .\]
\end{lemma}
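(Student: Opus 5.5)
We bound each term by a tree diagram and then show that the part of the diagram sum coming from $g\notin F(\epsilon,n)$ is small. On the event $\{0 \lra x_1, x_2;\, g = \opiv(0,\{x_1,x_2\})\}$, the edge $g$ is open and pivotal for both connections, and past $g$ there is no further common pivotal. Hence the event $\{0 \lra \underline{g}\} \circ \{\overline{g} \lra x_1\} \circ \{\overline{g} \lra x_2\}$ occurs: the segment from $0$ to $\underline g$ is disjoint from the two branches beyond $\overline g$, and by the Menger-type argument used for \eqref{eqn: piv-exists}, the absence of a common pivotal from $\overline g$ gives disjoint connections to $x_1$ and $x_2$. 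The BK inequality \eqref{eqn: BK} and \eqref{eqn: HHS} then give
\[
\mathbb{P}(0 \leftrightarrow x_1, x_2 ;\, g = \opiv(0, \{x_1, x_2\}) ) \le C\langle \underline g\rangle^{2-d}\langle \overline g - x_1\rangle^{2-d}\langle \overline g-x_2\rangle^{2-d}.
\]
Since $\overline g$ and $\underline g$ differ by a unit vector, and each vertex carries $2d$ edges, it suffices to show
\[
\lim_{\epsilon\to0}\limsup_n n^{2d-6}\sum_{z\notin F'(\epsilon,n)} \langle z\rangle^{2-d}\langle z-x_1\rangle^{2-d}\langle z-x_2\rangle^{2-d}=0,
\]
where $F'(\epsilon,n)$ is the vertex version of \eqref{eq:farregime}.

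The complement splits into two pieces. The first is the region near one of the three points, say $|z - x_1| < \epsilon n$ (the cases $|z|<\epsilon n$ and $|z-x_2|<\epsilon n$ are identical). For $n$ large and $\epsilon$ smaller than half the minimal distance among $0,y_1,y_2$, we have $|z|\ge cn$ and $|z-x_2|\ge cn$ there, with $c=c(y_1,y_2)$. The sum over this region is then at most
\[
Cn^{4-2d}\sum_{|w|<\epsilon n}\langle w\rangle^{2-d}\le C n^{4-2d}(\epsilon n)^2 = C\epsilon^2 n^{6-2d}.
\]
The second piece is the far region $|z| > \epsilon^{-1}n$. There all three factors are at most $C|z|^{2-d}$, since $|x_i|\le Cn \le |z|/2$ once $\epsilon$ is small. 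The sum is therefore bounded by
\[
C\sum_{|z|>\epsilon^{-1}n}|z|^{6-3d}\le C(\epsilon^{-1}n)^{6-2d}=C\epsilon^{2d-6}n^{6-2d},
\]
which is summable because $3d-6>d$. Multiplying by $n^{2d-6}$, taking $\limsup_n$ and then $\epsilon\to0$ gives zero in both cases, since $d>6$.

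The main obstacle is the first step, justifying the three-fold disjoint occurrence; everything after it is elementary. If the disjointness of the two branches beyond $\overline g$ is delicate, I would fall back on a weaker but sufficient route. I would use only $\{0\lra\underline g\}\circ\{\overline g\lra x_1,x_2\}$, bound $\tau_3(\overline g,x_1,x_2)$ by the Aizenman–Newman tree-graph bound $\sum_w\tau(\overline g,w)\tau(w,x_1)\tau(w,x_2)$, and then repeat the two region estimates. These now involve one extra convolution, handled with \eqref{eqn: std-conv} and \eqref{eqn: triple-sum}, and each still produces a factor $\epsilon^2$ or a positive power of $\epsilon$.
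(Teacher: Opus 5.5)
Your proposal is correct and follows essentially the paper's argument: bound each term by $\prob(\{0\lra\underline g\}\circ\{\overline g\lra x_1\}\circ\{\overline g\lra x_2\})$, apply BK and \eqref{eqn: HHS}, then split the complement of $F(\epsilon,n)$ into the three near regions (each $C\epsilon^2 n^{6-2d}$) and the far region ($C\epsilon^{2d-6}n^{6-2d}$). The only cosmetic difference is that the paper first applies the switching identity \eqref{eq:truncpreview} (Lemma~\ref{lem:newcut}), at the cost of a harmless factor $\beta$, before extracting the disjoint occurrence; you extract it directly from the original event, which is equally valid because closing $g$ separates $\clust(0)$ from $\clust(\overline g)$, and any common pivotal for $\overline g\lra x_1$ and $\overline g\lra x_2$ would be a common pivotal beyond $g$ (so your fallback route is not needed).
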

Assuming the veracity of Lemma~\ref{lem:nearregime}, the proof of Proposition~\ref{prop:threept} will be complete once we show 
\begin{equation}
    \label{eq:threept3}
    \begin{split}
    &\lim_{\epsilon \to 0} \lim_{n \rightarrow \infty} n^{2d-6} \sum_{g \in F(\epsilon, n)} \mathbb{P}\left(0 \leftrightarrow x_1, x_2 ;\, g = \opiv(0, \{x_1, x_2\})\right) \\
    =~&2d \alpha^3  \beta \rho \int_{\mathbb{R}^d}\frac{1}{|x|^{d-2}}\frac{1}{|x-x_1|^{d-2}}\frac{1}{|x-x_2|^{d-2}}\,\mathrm{d}x .
    \end{split}
\end{equation}
We prove Lemma~\ref{lem:nearregime} in Section \ref{sec:near3}; then, in Section~\ref{sec:far3} below, we complete the proof of Proposition~\ref{prop:threept} by establishing \eqref{eq:threept3}.

\subsection{Near-regime}\label{sec:near3}
In this section, we prove Lemma~\ref{lem:nearregime} via a diagrammatic estimate.
\begin{proof}[Proof of Lemma~\ref{lem:nearregime}]
We show that there is a $C = C(y_1, y_2) > 0$ uniform in $n$ and in $\epsilon$ small relative to $|y_1|, |y_2|$ such that
    \begin{equation}\sum_{g \notin F(\epsilon, n)} \mathbb{P}(0 \leftrightarrow x_1, x_2 ;\, g = \opiv(0, \{x_1, x_2\}) )  \leq C \epsilon^2 n^{6-2d}\ .\label{eq:keyest} \end{equation}
    This clearly suffices to complete the proof.

The sum in \eqref{eq:keyest} is bounded, using \eqref{eq:truncpreview} (that is,~Lemma~\ref{lem:newcut}) by
\begin{align*}
&\beta  \sum_{g \notin F(\epsilon,n)} \mathbb{P}(0 \leftrightarrow \underline{g}\,\text{off}\,\clust(\overline{g}), \overline{g}\leftrightarrow x_1,\overline{g}\leftrightarrow x_2, \mathcal{P}(\overline{g},x_1)\cap\mathcal{P}(\overline{g},x_2)=\varnothing)\\
\leq & \beta \sum_{g \notin F(\epsilon,n)} \prob(\{0 \leftrightarrow \underline{g}\} \circ \{\overline{g}\leftrightarrow x_1\} \circ\{\overline{g}\leftrightarrow x_2\}) \ .
\end{align*}
Using \eqref{eqn: BK} and the two-point function bound \eqref{eqn: HHS}, we can bound the expression in the last display by the following, up to a constant factor:
\begin{align*}
    &\sum_{|\underline{g}| < \epsilon n} \langle \underline{g}\rangle^{2-d}\langle x_1-\overline{g}\rangle^{2-d}\langle x_2-\overline{g}\rangle^{2-d}
    +\sum_{|x_1 - \underline{g}| < \epsilon n} \langle \underline{g}\rangle^{2-d}\langle x_1-\overline{g}\rangle^{2-d}\langle x_2-\overline{g}\rangle^{2-d}\\
    +&\sum_{|x_2 - \underline{g}| < \epsilon n} \langle \underline{g}\rangle^{2-d}\langle x_1-\overline{g}\rangle^{2-d}\langle x_2-\overline{g}\rangle^{2-d}\ + \sum_{|\underline{g}| > \epsilon^{-1} n} \langle \underline{g}\rangle^{2-d}\langle x_1-\overline{g}\rangle^{2-d}\langle x_2-\overline{g}\rangle^{2-d}
\end{align*}
Extracting factors corresponding to connections of length $O(n)$, for $\epsilon > 0$ small relative to $|y_1|$ and $|y_2|$, we have the bound
\[ C \cdot n^{4-2d} \left(\sum_{|\underline{g}| < \epsilon n} \langle \overline{g}\rangle^{2-d}+ \sum_{|x_1 - \overline{g}| < \epsilon n} \langle x_1-\underline{g}\rangle^{2-d} + \sum_{|x_2 - \overline{g}| < \epsilon n} \langle x_2-\underline{g}\rangle^{2-d}\right) + C\sum_{|\overline{g}| > \epsilon^{-1} n} \langle g\rangle^{6-3d},\]
which in turn is bounded by
\[ C \epsilon^2 n^{6-2d} \]
uniformly in $n$ and small $\epsilon > 0$, as claimed in \eqref{eq:keyest}.
\end{proof}

\subsection{Far-regime \label{sec:far3}}
With Lemma~\ref{lem:nearregime} proved, we proceed to prove Proposition~\ref{prop:threept}. 
For clarity, we recall and introduce some notation. We recall Definition~\eqref{defin:gammadef} in the special case of three vertices:
\begin{equation}\label{eq:gammarecall}
\Gamma(v,x_1,x_2):=\{v\leftrightarrow x_1 \leftrightarrow x_2\}.
\end{equation}

\begin{proof}[Proof of Proposition~\ref{prop:threept}]
We prove Proposition~\ref{prop:threept}, as announced, by establishing \eqref{eq:threept3}. We consider the sum appearing on the left-hand side of \eqref{eq:threept3}.
We introduce a new parameter $K \geq 1$ fixed relative to $n$ which will appear in intermediate expressions. We will ultimately take $n \to \infty$ with fixed $K, \epsilon$, then take $K \to \infty$, finally taking $\epsilon \to 0$, in a sense showing
\begin{equation}
\begin{split}
    \label{eq:threept3k}
    &\lim_{\epsilon \to 0} \lim_{K \to \infty} \lim_{n \rightarrow \infty} n^{2d-6} \sum_{g \in F(\epsilon, n)} \mathbb{P}\left(0 \leftrightarrow x_1, x_2 ;\, g = \opiv(0, \{x_1, x_2\})\right) \\
    =~&2d \alpha^3 \beta \rho \int_{\mathbb{R}^d}\frac{1}{|x|^{d-2}}\frac{1}{|x-x_1|^{d-2}}\frac{1}{|x-x_2|^{d-2}}\,\mathrm{d}x . 
    \end{split}
\end{equation}

 We again apply \eqref{eq:truncpreview}  (Lemma \ref{lem:newcut}) to the sum appearing in \eqref{eq:threept3} and \eqref{eq:threept3k}. We then introduce an independent copy $\tilde{\mathbb{P}}$ of the percolation measure from which we sample the cluster of $\overline{g}$, treating this cluster as fixed when we sample the cluster of $0$ from $\prob$. This yields
\begin{equation}\label{eqn: PL}
\begin{split}
&\beta \sum_{g \in F(\epsilon,n)} \mathbb{P}(0 \leftrightarrow \underline{g}\,\text{off}\,\clust(\overline{g}), \Gamma(\bar{g},x_1,x_2),\mathcal{P}_{\overline{g}})\\
&\geq \beta  \sum_{g \in F(\epsilon,n)} \wto{\mathbb{E}}\left[\mathbbm{1}_{\{x_1 \lra \overline{g} \} \circ \{x_2 \lra \overline{g}\}} \mathbb{P}\left( 0 \sa{\Z^d \setminus \wto{\clust}_{B(\overline{g}; 2K)}(\overline{g})} \underline{g} \right)\right] - C n^{6-2d} K^{(6-d)/d}\ .
\end{split}
\end{equation}
where in the second line we used Lemma~\ref{lem: GT-truncation} (see \eqref{eq:ktrunc3pt2} below the statement of that lemma). The second term in the second line of \eqref{eqn: PL} will not contribute to the limit appearing in \eqref{eq:threept3k} because the second term is much smaller than $n^{6-2d}$ for $K$ large.
The inequality
\[\beta \sum_{g \in F(\epsilon,n)} \mathbb{P}(0 \leftrightarrow \underline{g}\,\text{off}\,\clust(\overline{g}), \Gamma(\bar{g},x_1,x_2),\mathcal{P}_{\overline{g}}) \leq  \beta  \sum_{g \in F(\epsilon,n)} \wto{\mathbb{E}}\left[\mathbbm{1}_{\{x_1 \lra \overline{g} \} \circ \{x_2 \lra \overline{g}\}} \mathbb{P}\left( 0 \sa{\Z^d \setminus \wto{\clust}_{B(\overline{g}; 2K)}(\overline{g})} \underline{g} \right)\right] \]
is trivial; we thus focus on the first term of the right-hand side of \eqref{eqn: PL}, showing 
\begin{equation}
    \label{eqn: PL2}
    \begin{gathered}
   \lim_{\epsilon \to 0} \lim_{K \to \infty} \lim_{n \rightarrow \infty} \beta  \sum_{g \in F(\epsilon,n)} \wto{\mathbb{E}}\left[\mathbbm{1}_{\{x_1 \lra \overline{g} \} \circ \{x_2 \lra \overline{g}\}} \mathbb{P}\left( 0 \sa{\Z^d \setminus \wto{\clust}_{B(\overline{g}; 2K)}(\overline{g})} \underline{g} \right)\right]\\
   = 2d \alpha^3 \beta \rho \int_{\mathbb{R}^d}\frac{1}{|x|^{d-2}}\frac{1}{|x-x_1|^{d-2}}\frac{1}{|x-x_2|^{d-2}}\,\mathrm{d}x\ ,
   \end{gathered}
\end{equation}
which will complete the proof of the theorem.

We rewrite the left-hand side of \eqref{eqn: PL2} by partitioning over admissible values $\mathcal{D}$ of $\wto{\clust}_{B(\overline{g}; 2K)}$. 
\begin{align}
    &\beta  \sum_{g \in F(\epsilon,n)} \wto{\mathbb{E}}\left[\mathbbm{1}_{\{x_1 \lra \overline{g} \} \circ \{x_2 \lra \overline{g}\}} \mathbb{P}\left( 0 \sa{\Z^d \setminus \wto{\clust}_{B(\overline{g}; 2K)}(\overline{g})} \underline{g} \right)\right]\nonumber\\
    =~&\beta \sum_{g \in F(\epsilon,n)} \sum_{\mathcal{D}}  \wto{\mathbb{E}}\left[\mathbbm{1}_{\wto{\clust}_{B(\overline{g}; 2K)}(\overline{g}) = \mathcal{D}}\mathbbm{1}_{\{x_1 \lra \overline{g} \} \circ \{x_2 \lra \overline{g}\}} \mathbb{P}\left( 0 \sa{\Z^d \setminus \mathcal{D}} \underline{g} \right)\right]\ .\label{eqn: apply-IIC}
\end{align}

Applying the IIC result \eqref{eqn: IIC-conv2} (or its enhanced analogue, Lemma~\ref{lem:iicnew2}), we control the expression in \eqref{eqn: apply-IIC} as $n \to \infty$:
\begin{align*}
   \lim_{n \to \infty} \frac{\sum_{g \in F(\epsilon,n)} \sum_{\mathcal{D}}  \wto{\mathbb{E}}\left[\mathbbm{1}_{\wto{\clust}_{B(\overline{g}; 2K)}(\overline{g}) = \mathcal{D}}\mathbbm{1}_{\{x_1 \lra \overline{g} \} \circ \{x_2 \lra \overline{g}\}} \mathbb{P}\left( 0 \sa{\Z^d \setminus \mathcal{D}} \underline{g} \right)\right]}{\sum_{g\in F(\epsilon,n)} \tau(0,\underline{g}) \sum_{\mathcal{D}} \nu_{\underline{g}}(\Xi_{\underline{g}}(\mathcal{D}))\widetilde{\mathbb{P}}(\Gamma(\bar{g},x_1,x_2), \mathcal{P}_{\bar{g}}, \mathcal{D}=\wto{\clust}_{B(\overline{g};2K)}(\overline{g}))} = 1
\end{align*}
for fixed $\epsilon$ and $K$. It therefore suffices to show that the denominator of the last display approaches the right-hand side of \eqref{eq:threept3k} when we take $n$, then $K$ to infinity, followed by taking $\epsilon \to 0$.

Performing the sum over $\mathcal{D}$, that denominator is
\begin{align}
    \sum_{g\in F(\epsilon,n)} \tau(0,\underline{g})  \widetilde{\mathbb{E}}\left[ \nu_{\underline{g}}(\Xi_{\underline{g}}(\widetilde{\clust}_{B(\overline{g};2K)}(\overline{g}))) \mathbbm{1}_{\Gamma(\bar{g},x_1,x_2)} \mathbbm{1}_{\mathcal{P}_{\bar{g}}}\right] =: \sum_{g\in F(\epsilon,n)} \Pi(g)\ , \label{eq:pig}
\end{align} 
where each term of the sum on the left-hand side defines the quantity $\Pi(g)$ in the sum on the right-hand side.

Recall the definition of $\rho$ in \eqref{eq:rhodef}. Lemma~\ref{lem:genlem1} below shows that 
\begin{equation}
    \label{eq:genlempreview}
    \lim_{K \to \infty} \limsup_{n \to \infty} \sup_{g \in F(\epsilon, n) }\frac{\left|\Pi(g) - \rho \tau(0, \underline{g}) \tau(\overline{g}, x_1) \tau(\overline{g}, x_2)\right|}{\tau(0, \underline{g}) \tau(\overline{g}, x_1) \tau(\overline{g}, x_2)} = 0
\end{equation}
for each fixed $\epsilon > 0$. 
Pulling \eqref{eq:genlempreview} together with \eqref{eq:pig} and feeding back into \eqref{eqn: PL}, we have the following facts about the left-hand side of \eqref{eq:threept3}:
\begin{align}
    &\lim_{\epsilon \to 0} \lim_{n \rightarrow \infty} n^{2d-6} \sum_{g \in F(\epsilon, n)} \mathbb{P}(0 \leftrightarrow x_1, x_2 ;\, g = \opiv(0, \{x_1, x_2\})) \nonumber\\
    =~& \beta \rho \lim_{\epsilon \to 0} \lim_{n \rightarrow \infty} n^{2d-6} \sum_{g \in F(\epsilon, n)} \tau(0, \underline{g}) \tau(\overline{g}, x_1) \tau(\overline{g}, x_2)\nonumber\\
    =~& \alpha^3 \beta \rho \lim_{\epsilon \to 0} \lim_{n \rightarrow \infty} n^{2d-6} \sum_{g \in F(\epsilon, n)} \langle \underline{g}\rangle^{2-d} \langle\overline{g}- x_1\rangle^{2-d} \langle\overline{g}- x_2\rangle^{2-d}\ , \label{eq:riemann}
\end{align}
assuming the limit exists. In the third-line, we used \eqref{eqn: HHS}. The above is a Riemann sum approximation to an appropriate integral. Letting 
\[J(\epsilon) =  \{z \in \mathbb{R}^d: \, \epsilon^{-1} >  |z| > \epsilon, \, |z - y_1| > \epsilon, \, |z - y_2| > \epsilon \}\ , \]
it follows that \eqref{eq:riemann} is equal to
\begin{equation}
    \label{eq:riemann2}
    2d \alpha^3 \beta \rho \lim_{\epsilon \to 0}  \int_{J(\epsilon)} |z|^{2-d} |z - y_1|^{2-d} |z- y_2|^{2-d} \, \mathrm{d}z = (2d) \alpha^3 \beta \rho  \int_{\mathbb{R}^d} |z|^{2-d} |z - y_1|^{2-d} |z- y_2|^{2-d} \, \mathrm{d}z\ .
\end{equation}

From the fact that \eqref{eq:riemann} is identical to \eqref{eq:riemann2}, the claimed equality \eqref{eq:threept3} immediately follows, and the proposition is proved.
\end{proof}

\section{$k$-point convergence} \label{sec:generalk}

\subsection{Connectivity tree}\label{sec: connect-tree}
In this section, we associate to each configuration $\omega\in \Gamma(x_0, \dots, x_k)$ a canonical directed tree $T(\omega)$ encoding the minimal branching structure in the connections implied by $\Gamma(x_0,\ldots,x_k)$. The purpose of this construction is to organize subevents of $\Gamma(x_0,\ldots,x_k)$ according to the tree structure $T(\omega)$ they generate.

Let $\omega\in \Gamma(x_0,\ldots, x_k)$. 
\begin{enumerate}
    \item For each $1\le i\le k$, let
    \[\mathcal{B}_i:=\{\underline{e}: e\text{ is pivotal for } x_i\lra x_0\},\]
    where we denote by $\underline{e}$ the endpoint of $e$ first encountered on a path from $x_i$ to $x_0$. 
    This is a linearly ordered set, since every path must traverse the pivotal edges in the same order and orientation: for $u,v\in \mathcal{B}_i$, we write $u \prec v$ if $v$ appears after $u$ in every open path  $x_i\rightarrow x_0$. We write $u\prec v$ if $u\prec v$ and $u\neq v$.
    \item For $i\neq j$, define
    \[m_{ij}:=\begin{cases}
    \text{minimal element of } \mathcal{B}_i\cap \mathcal{B}_j,& \ \text{if } \mathcal{B}_i\cap\mathcal{B}_j\neq \emptyset\\
    x_0, &\text{ if } \mathcal{B}_i\cap \mathcal{B}_j=\emptyset. 
    \end{cases}.\]
    \item We define 
    \[V(T):=\{x_0,\ldots, x_k\}\cup \{m_{ij}, 1\le i\neq j\le k\},\]
    identifying equal vertices.
    \item For each $v\in V(T)\setminus \{x_0\}$, define its parent
    \[p(v)=\mathrm{min}\{w\in V(T): v\prec w\}.\]
    \item $E(T)$ consists of all oriented edges $v\rightarrow p(v)$ with $v\in V(T)$. The root is $x_0$.
\end{enumerate}

\begin{proposition}
    For every $\omega\in \Gamma(x_0,\ldots,x_k)$, the connectivity tree $T(\omega)$ is a directed tree with leaves in the set $\{x_0,\ldots,x_k\}$. $T$ is also a tree when viewed as an undirected graph.
    \begin{proof}
    By construction, every vertex in $T$ except $x_0$ has a unique parent, so $\# E(T)=\#V(T)-1$ which implies that $T$ is a tree. Starting at any vertex of $v$ and applying the parent operation repeatedly, we obtain a path in $T$ ending at $x_0$. Since $m_{ij}$ appears on any path from $x_i$, only the vertices $x_j$, $0\le j\le k$ can have 0 children.
    \end{proof}
\end{proposition}

\subsection{The connectivity tree is regular with high probability \label{sec:conntree}}
For $k \geq 3$, let 
$\mathfrak{T}_{k}$
be the set of oriented trees with $k$ leaves labeled by the symbols $0, 1,\, \dots, k-1$, with the property that each non-leaf vertex has in-degree two and such that every edge is oriented toward $0$. Trees which are isomorphic as directed graphs but have different labelings are regarded as distinct elements of $\mathfrak{T}_k$.
We now define an event $E_{\mathrm{deg}}$ such that if $\omega \in \Gamma(x_0,\ldots,x_k)\cap E_{\mathrm{deg}}^c$, $T(\omega)$ can be identified with an element $\mathcal{T}(\omega)\in \mathfrak{T}_{k+1}$ obtained by replacing the label $x_i$ with $i$ for $0 \leq i \leq k$, because in that case, all non-leaf vertices of $T(\omega)$ exhibit binary branching. 

We now introduce the main regime under which we prove our results. Let $\epsilon>0$. 

\begin{definition}
    The set of far-regime points  $G(\epsilon, n)$ is defined by
    \begin{equation}
    G(\epsilon,n):=\{(x_0,x_1,\dots,x_k) \in [-\epsilon^{-1} n,\epsilon^{-1} n]^{k+1}: \min_{i=1,\dots,k} |x_i - x_{i-1}| \geq \epsilon n\}.
    \label{farregime-def}
    \end{equation}
\end{definition}
We define $E_{\mathrm{deg}}$ to be the event that:
\begin{enumerate}
\item there is $0 \le i \le k$ such that $x_i$ is not a leaf in $T(\omega)$, or
\item some vertex of $T(\omega)$ has in-degree $\ge 3$.
\end{enumerate}

The core result of this section is the following.
\begin{lemma} \label{lem:binbran}
Assume $d>6$ and $x_0,\ldots, x_k$ are in the far-separation regime \eqref{farregime-def}. Let $E_{\mathrm{deg}}$ denote the event defined above. Then, there is a constant $C(\epsilon,k)$ such that:
\begin{align}\label{eqn: Edeg-bd}
\mathbb{P}(E_{\mathrm{deg}}, \Gamma(x_0, \dots, x_k)) &\leq C n^{(4-d)k - 2}\times \begin{cases}
    n^{-1}& \quad d=7\\
    (\log n) n^{-2}& \quad d =8\\
    n^{-2}&\quad d>8
    \end{cases}.
\end{align}
\end{lemma}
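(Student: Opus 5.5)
On the event $E_{\mathrm{deg}}\cap\Gamma(x_0,\dots,x_k)$ I would extract, using the pivotal structure encoded in $T(\omega)$, a family of disjointly occurring open paths forming a ``degenerate'' tree diagram. I would then bound its probability with the BK inequality \eqref{eqn: BK} and the two-point bound \eqref{eqn: HHS}, and sum over the positions of the branch points. The reference scale is the binary tree diagram: $k+1$ leaves, $k-1$ internal vertices, $2k-1$ edges. Its value is $\sum_{u_1,\dots,u_{k-1}}\prod_{e}\langle u_a-u_b\rangle^{2-d}\asymp n^{d(k-1)+(2-d)(2k-1)}=n^{(4-d)k-2}$, matching the main term. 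The claim is that degenerate configurations cost an extra factor $n^{-1}$, $n^{-2}\log n$ or $n^{-2}$.

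\emph{Step 1 (combinatorial reduction).} If no common pivotal exists between two branches, the two connections occur disjointly, as in \eqref{eqn: piv-exists}. If $m_{ij}$ exists, then $x_i\lra m_{ij}$ and $x_j\lra m_{ij}$ occur edge-disjointly, and disjointly from the connection from $m_{ij}$ toward $x_0$. Iterating along $T(\omega)$ should give the following. On case (2) of $E_{\mathrm{deg}}$, some vertex $v$ has three incoming edge-disjoint open paths plus one outgoing one, all disjoint from the rest of the tree. The event is therefore contained in a union, over tree shapes with one vertex of degree $\ge4$ (plus possibly other vertices), of disjoint occurrences of the corresponding path events. On case (1), some $x_i$ is an interior vertex of $T$; then $x_i$ has degree $\ge2$ in a tree on the same leaves, and the diagram has a leaf identified with a branch point. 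I would encode both cases as: the BK-bound diagram is a tree on at most $k-2$ free internal vertices, where either one internal vertex has degree $4$ or one internal vertex is pinned at some $x_i$. The case where $v=x_0$ has in-degree $\ge3$ is handled similarly.

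\emph{Step 2 (power counting).} Each such diagram is a sum over $k-2$ free vertices of products of $\langle\cdot\rangle^{2-d}$. I would evaluate it by iterating \eqref{eqn: std-conv} and \eqref{eqn: triple-sum} from the leaves inward. In the far regime $G(\epsilon,n)$ all leaf separations are of order $n$. Losing one free vertex and one edge changes the naive count by $n^{-d}\cdot n^{d-2}=n^{-2}$, giving $n^{(4-d)k-4}$ as long as all sums converge at the scale $n$.

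\emph{Step 3 (local singularities; the main obstacle).} The danger is a degree-4 vertex $v$ near, or coinciding with, another internal vertex; similarly a leaf $x_i$ with internal vertices accumulating near it. Small loops then produce local sums such as $\sum_{w}\langle w\rangle^{2(2-d)}\langle w\rangle^{?}$. Precisely, collapsing two adjacent internal vertices at distance $r$ gives a factor $\sum_{r}r^{d-1}r^{2-d}\cdot(\text{gain }r^{2-d}\text{ vs }n^{2-d})$. The short-edge contribution behaves like $\sum_{|w|\le n}\langle w\rangle^{2-d}\cdot n^{-?}$, and after normalizing relative to $n^{-2}$ it becomes $n^{-2}\sum_{|w|\le n}\langle w\rangle^{6-2d}\cdot\langle w\rangle^{?}$. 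This is where the dichotomy $d=7,8,>8$ appears: sums of the form $\sum_{|w|\le n}\langle w\rangle^{-(d-6)-?}$ behave like $n^{1}$, $\log n$ or $O(1)$, i.e.\ $\sum_{|w|\le n}\langle w\rangle^{8-2d}\cdot\langle w\rangle^{d-?}$. I would organize this by a dyadic decomposition of the distance between the degenerate vertex and its nearest neighbour in the diagram, using \eqref{eqn:log-conv} for the borderline $d=8$ case. Getting these exponents right uniformly over all tree shapes, via an induction on $k$ that removes a cherry of leaves using \eqref{eqn: triple-sum}, is the step I expect to require the most care. Summing the finitely many shapes then yields the constant $C(\epsilon,k)$.
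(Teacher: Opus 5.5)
Your case (1) is fine and matches the paper. If some $x_i$ is interior, then $\Gamma(A)\circ\Gamma(B)$ occurs with $A\cap B=\{x_i\}$, and BK plus the tree-graph bound gives $n^{(4-d)k-4}$.

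\textbf{The gap: case (2) in Step 1.} From $m_{12}=m_{13}=m_{23}=v$ you only get \emph{pairwise} disjointness: $\{w_i\lra v\}\circ\{w_j\lra v\}$ for each pair $i\neq j$. Iterating this along $T(\omega)$ does not produce three mutually edge-disjoint open paths into $v$. Here is a concrete configuration. Let the cluster near $v$ be an open cycle through $v,a,c,b$ (in that order). Hang the branch of $w_1$ at $a$, that of $w_2$ at $b$, and that of $w_3$ at $c$, and let $e$ be the pivotal edge leaving $v$ toward $x_0$.
\begin{itemize}
\item No cycle edge is pivotal, so all three pairwise first common pivotals equal $v$, and $v$ has in-degree $3$ in $T(\omega)$.
\item Paths from the $w_i$ to $v$ cannot use $e$, since $e$ is pivotal for $w_i\lra x_0$.
\item Besides $e$, the vertex $v$ has only two open edges, so three edge-disjoint paths into $v$ do not exist.
\end{itemize}
What is true is Lemma~\ref{lem: Diagram}(3): there is an open cycle through $v$, with the subtrees attached at possibly coinciding points $a,b,c$. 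The BK bound in case (2) is therefore a diagram containing a loop, not a tree with one degree-$4$ vertex on $k-2$ free internal vertices.

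\textbf{Why this matters for Step 3.} The loop is exactly where the $d$-dependence of the lemma comes from, so Step 3 is looking in the wrong place; with its unspecified exponents it is also not yet an argument. Tree diagrams with one degree-$4$ vertex or one pinned leaf are $O(n^{(4-d)k-4})$ for every $d>6$. Moving the degree-$4$ vertex to distance $r$ from a neighbour costs a factor of order $r^2$, so the sum is dominated by $r\sim n$, with no $\log n$ and no loss of a power. By contrast, a macroscopic $4$-cycle with $k=3$ already contributes $n^{4d}\cdot n^{8(2-d)}=n^{16-4d}$. Against the main term $n^{10-3d}$ this is a suppression of only $n^{6-d}$, which is $n^{-1}$ in $d=7$. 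Your reduction would miss this contribution entirely.

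\textbf{How the paper handles the loop.} Proposition~\ref{cor: cycle-negl} proceeds in three steps:
\begin{enumerate}
\item Reduce the attached subtrees with Corollary~\ref{cor: tree-reduct}.
\item Shorten the cycle to four vertices with Corollary~\ref{cor: path-red}.
\item Sum two opposite cycle vertices with \eqref{eqn: triple-sum}, which creates the bubble factor $\langle z_0-z_2\rangle^{8-2d}$.
\end{enumerate}
Lemma~\ref{prop: one-loop} then finishes the estimate. Its three regimes (that factor summable, borderline, non-summable) are precisely $d>8$, $d=8$ and $6<d<8$. Without the cycle structure of Lemma~\ref{lem: Diagram}(3), case (2) of $E_{\mathrm{deg}}$ is not covered by your argument.
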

The proof appears at the end of the next section, after we develop several auxiliary results used in the proof.

\subsection{Diagrammatic Lemmas}\label{sec:diagrams}
Throughout this section, as well as Section \ref{sec:inductive}, we repeatedly use a familiar strategy in high-dimensional percolation to convert estimates into sums over diagrams. The method proceeds in two steps. 

First, we identify disjoint open connections that must be present in a given configuration, as in the next Lemma \ref{lem: Diagram}. Second, we apply the BK inequality \eqref{eqn: BK} to factor the probability into a product of two-point functions, then bound each factor using \eqref{eqn: HHS}. The resulting expression is a sum over the positions of internal vertices of a \emph{diagram}, a graph whose edges carry factors $\langle u-v\rangle^{2-d}$, and evaluate these sums using the convolution estimates \eqref{eqn: std-conv} and \eqref{eqn: triple-sum}. 

The diagrammatic lemmas below (Propositions \ref{prop: contract}, \ref{prop: interior-delta} and their corollaries) systematize this evaluation: they show that each internal vertex of a tree-shaped diagram can be ``contracted'' at a cost of $n^{4-d}$ per internal vertex. The main point of the current subsection is that diagrams containing cycles produce terms of strictly smaller order than the leading tree-shaped contributions.

\begin{lemma}\label{lem: Diagram}
Let $v$ be a vertex of the connectivity tree $T(\omega)$. Suppose $v$ has $m$ children, $w_1,\ldots, w_m$ in $T$, that is, $p(w_i)=v$ for $1\le i\le m$.
\begin{enumerate}
    \item The configuration $\omega$ contains an open tree spanning the subset of $x_i$ corresponding to the subset of $\{x_0,\ldots,x_k\}$ in $T_{\prec v}$, the part of $T$ below $v$: $T_{\prec v}:=\{w\in V(T): w\prec v\}$.
    \item For any $1\le i\neq j\le m$, the configuration $\omega$ contains two edge-disjoint paths $\pi_1$, resp. $\pi_2$, between $w_i$ and $v$, respectively $w_j$ and $v$.
    
    \item If $m \ge 3$,  then choosing $\pi_1$ and $\pi_2$ corresponding to $w_1$ and $w_2$ respectively, in $\omega$ additionally:
    \begin{enumerate}
        \item  for each $j$, there are vertices $a,b,c\in \mathbb{Z}^d$ such that $a\in \pi_1$, $b\in \pi_2$, such that 
        \[\omega\in \{v\lra a\}\circ \{a\lra c\}\circ \{w_j\lra c\}\circ \{c \lra b\}\circ \{b\lra v\},\]
        so that in particular $\gamma: v\rightarrow a\rightarrow b\rightarrow c\rightarrow v$ forms an open cycle,
        \item each $w_j$, $1\le j \le m$ lies in a open rooted tree, whose leaf set lies in $\{x_i\}_{0\le i\le k}$, as is attached to $\gamma$ at a single vertex, but is edge disjoint from it. For $j \geq 3$, choosing $a, b, c$ as above, we have that $w_1$ lies in the tree attached at $a$, $w_2$ in the tree attached at $b$, and $w_j$ in the tree attached at $c$.
    \end{enumerate}
\end{enumerate}
\begin{proof}
Let $v\in T$ and $e(v)$ be the unique pivotal edge emanating from $v$. Closing $e(v)$ disconnects some subset $C=\{y_1,\ldots,y_\ell\}\subset \{x_1,\ldots,x_k\}$ from $x_0$. Choose $x\in C$. There exists an open path $t_1$ in $\omega$ from $y_1$ to $v$. We then construct a spanning tree $t_n$ iteratively by attaching the remaining $y_n$ to the tree by the portion of the path $y_n\rightarrow v$ until the first point it hits $t_{n-1}$, until $C$ is exhausted.

If $w_i$, $w_j$ are direct descendants of $v$, then $v$ coincides with $m_{ij}$. The existence of $\pi_1$ and $\pi_2$ follows directly from this. 

For the third item, for $j \geq 3$ we select a path $\pi_j: w_j\rightarrow v$ disjoint from $\pi_1$ and a path $\pi_j':w_j\rightarrow v$ disjoint from $\pi_2$. This is possible by the same argument guaranteeing the existence of $\pi_1$ and $\pi_2$. We then  let $a$ be the first intersection of $\pi_j'$ with $\pi_1$, and let $b$ be the first intersection of $\pi_j$ with $\pi_2$ and $c$ be the last common vertex between $\pi_j$ and $\pi_j'$ appearing before  $a$ and $b$ on either path. (Note that some of these points may coincide.)
\end{proof}
\end{lemma}

\begin{definition}
Fix $v,w_1\ldots, w_k\in \mathbb{Z}^d$ and let $S$ be an undirected graph with vertex set $\{0,\ldots, r\}$, with $r\ge k$. We assume the leaves (i.e. vertices of degree 1) are labeled by $\{0,\ldots, k\}$. 

We let $I(S)=\{k+1,\ldots, r\}$ denote the non-leaf (internal) vertices.
A map 
\[\psi: V(S)=\{0,\ldots,r\}\rightarrow \mathbb{Z}^d\]
is \emph{admissible} if $\psi(0)=v$ and $\psi(i)=w_i$ for $0\le i \le k$. Note that we do not require $\psi$ to be injective.

We define the \emph{valuation} of $S$ by
\begin{equation}
\label{eqn: val-def}
\mathrm{val}(S)=\sum_{\psi \text{ admissible }}\prod_{e=\{x,y\}\in E(S)}\tau\big(\psi(x),\psi(y)\big).
\end{equation}
\end{definition}
Recall that $\tau=\tau_2$ denotes the 2-point function. Note that the sum over $\psi$ in \eqref{eqn: val-def} is equivalent to a sum over 
\[(z_x)_{x\in I(S)}\in (\mathbb{Z}^d)^{\# I(S)}.\]
That is, $\mathrm{val}(S)$ is a diagram obtained by associating a two point function factor to each edge of $S$, taking the product over the edges and then summing over possible maps of the vertices in $I(S)$. By the BK inequality \eqref{eqn: BK} and the two-point function bound \eqref{eqn: HHS}, $\mathrm{val}(S)$ bounds the probability of any event whose occurrence requires disjoint open connections along the edges of S.

\begin{proposition}\label{prop: contract}
Suppose $|w_1-w_2|\ge \epsilon n$ and $|w_1|, |w_2|\le \epsilon^{-1}n$. Then, for $p\in \mathbb{Z}^d$,
\begin{equation}\label{eqn: G-prod}
\sum_{v\in \mathbb{Z}^d}\langle p-v\rangle^{-d+2}\langle v-w_1\rangle^{-d+2}\langle v-w_2\rangle^{-d+2}\le C(\epsilon, d)n^{4-d}\times \sum_{i=1}^2\langle p-w_i\rangle^{-d+2}.
\end{equation}
\end{proposition}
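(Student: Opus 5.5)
We prove \eqref{eqn: G-prod} by splitting the sum over $v$ according to which of $p, w_1, w_2$ is closest, reducing each piece to the triple-sum estimate \eqref{eqn: triple-sum}. Set $r := |w_1 - w_2| \ge \epsilon n$. Note that $r \le 2\epsilon^{-1} n$, since $|w_i| \le \epsilon^{-1}n$, so $r$ is comparable to $n$ with constants depending only on $\epsilon$.

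We apply \eqref{eqn: triple-sum} directly with $(x,y,w) = (p, w_1, w_2)$. This bounds the left-hand side by a constant times the sum of three terms:
\[
\frac{\min\{\langle p-w_1\rangle,\langle p-w_2\rangle\}^2}{\langle p-w_1\rangle^{d-2}\langle p-w_2\rangle^{d-2}}
+\frac{\min\{\langle w_1-w_2\rangle,\langle w_1-p\rangle\}^2}{\langle w_1-w_2\rangle^{d-2}\langle w_1-p\rangle^{d-2}}
+\frac{\min\{\langle w_2-p\rangle,\langle w_2-w_1\rangle\}^2}{\langle w_2-p\rangle^{d-2}\langle w_2-w_1\rangle^{d-2}}.
\]

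The second and third terms are immediate. Bounding the minimum by $\langle w_1 - w_2\rangle$, the second term is at most
\[
\langle w_1-w_2\rangle^{4-d}\langle p-w_1\rangle^{2-d} \le (\epsilon n)^{4-d}\langle p-w_1\rangle^{2-d},
\]
using $d > 4$. The third term is handled the same way, giving $(\epsilon n)^{4-d}\langle p-w_2\rangle^{2-d}$.

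For the first term, suppose without loss of generality that $|p-w_1| \le |p-w_2|$. By the triangle inequality, $\langle p-w_2\rangle \ge r/2 \ge \epsilon n/2$. Then
\[
\frac{\langle p-w_1\rangle^{2}}{\langle p-w_1\rangle^{d-2}\langle p-w_2\rangle^{d-2}} = \langle p-w_1\rangle^{2}\,\langle p-w_2\rangle^{2-d}\,\langle p-w_1\rangle^{2-d}.
\]
We must show $\langle p-w_1\rangle^2\langle p-w_2\rangle^{2-d} \le C n^{4-d}$, and we split into two cases.

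\emph{Case $|p - w_1| \le r$.} Here $\langle p-w_1\rangle^2 \le C r^2$ and $\langle p-w_2\rangle^{2-d} \le C r^{2-d}$. The product is therefore at most $C r^{4-d} \le C(\epsilon) n^{4-d}$.

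\emph{Case $|p - w_1| > r$.} Here $|p-w_2| \le |p-w_1| + r \le 2|p-w_1|$, so $\langle p-w_1\rangle$ and $\langle p-w_2\rangle$ are comparable. The product is then at most $C\langle p-w_2\rangle^{4-d} \le C(\epsilon n)^{4-d}$, again using $d > 4$.

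Summing the three contributions gives \eqref{eqn: G-prod}.

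The only mildly delicate point is the first term when $p$ is far from both $w_i$. There the factor $\min^2$ must be absorbed into a comparable distance, and this is exactly where the lower bound $|w_1-w_2|\ge\epsilon n$ is needed. The hypothesis $|w_i|\le\epsilon^{-1}n$ is used only to keep $r$ comparable to $n$; in fact, since $d>4$, the lower bound $r\ge\epsilon n$ alone already suffices.
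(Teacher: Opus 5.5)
Your proof is correct and follows the paper's route: apply \eqref{eqn: triple-sum} with $(x,y,w)=(p,w_1,w_2)$, absorb the two terms centered at $w_1$ and $w_2$ using $\langle w_1-w_2\rangle^{4-d}\le(\epsilon n)^{4-d}$, and control the term centered at $p$ using the separation of $w_1,w_2$. Your handling of that last term is more careful than the paper's: the paper asserts $\min\{|p-w_1|,|p-w_2|\}\ge\epsilon n/2$, but only the corresponding $\max$ bound holds. Your case split is also unnecessary, since $\langle p-w_1\rangle\le\langle p-w_2\rangle$ already gives $\langle p-w_1\rangle^2\langle p-w_2\rangle^{2-d}\le\langle p-w_2\rangle^{4-d}\le C(\epsilon)n^{4-d}$.
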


Proposition \ref{prop: contract} can be interpreted in terms of a contraction operation: let $S$ be a tree diagram with an internal vertex $v$ having a parent $p=p(v)$ and leaf children $w_1, w_2$. The \emph{contracted} tree $S^{(i)}$, $1\le i\le 2$ is obtained by deleting $v$ and attaching $p(v)$ directly to $w_i$ (and deleting the other leaf-edges from $v$). Then \eqref{eqn: G-prod} can be written as
\[\mathrm{val}(S)\le C(\epsilon,d)n^{4-d} \sum_{i=1}^2 \mathrm{val}(S^{(i)}).\]
That is, summing over $v$ replaces three edges by a single edge, introducing a factor $n^{2-d}$ for each deleted edge, and $n^d$ for the summation.
\begin{proof}[Proof of Proposition \ref{prop: contract}]
Applying \eqref{eqn: triple-sum}, we find, up to a constant factor, the following bound for the left-hand side of \eqref{eqn: G-prod}:
\begin{align}
&\langle p-w_1\rangle^{-d+2}\langle w_1-w_2\rangle^{-d+4}\label{eqn: cherry-first}\\
+~& \langle p-w_2\rangle^{-d+2}\langle w_1-w_2\rangle^{-d+4} \label{eqn: cherry-2nd}\\
+~& \langle p-w_1\rangle^{-d+2}\langle p-w_2\rangle^{-d+2}\min \{\langle p-w_1\rangle^2, \langle p-w_2\rangle^2\} \label{eqn: cherry-last}.
\end{align}
The desired estimate follows from  $|w_1-w_2|\ge \epsilon n$ in cases \eqref{eqn: cherry-first} and \eqref{eqn: cherry-2nd}.
For \eqref{eqn: cherry-last}, we use the separation condition $|w_1-w_2|\ge \epsilon n$ to find
\[\min \{|p-w_1|,|p-w_2|\}\ge \frac{\epsilon}{2} n,\]
from which we find
\[\eqref{eqn: cherry-last}\le C(\epsilon)n^{-d+4}(\langle p-w_1\rangle^{-d+2}+\langle p-w_2\rangle^{-d+2}).\]
\end{proof}

\begin{corollary}[Complete Tree Reduction]\label{cor: tree-reduct}
Let $S$ be an undirected, binary branching tree with $\ell+1$ leaves: all non-leaf vertices in $S$ have degree three. Suppose the admissible $\psi$ map the leaves to $v,w_1,\ldots,w_\ell\in \mathbb{Z}^d$, where $|w_i-w_j|\ge \epsilon n$ and $|w_i|\le \epsilon^{-1} n$.

Then
\begin{equation}
\mathrm{val}(S)\le C(\epsilon,k) n^{(4-d)(\ell-1)}\sum_{i=1}^\ell \langle v-w_i\rangle^{-d+2}.
\end{equation}
In particular, if 
\[|v-w_i|\ge \epsilon n, \quad i=1,\ldots,\ell\]
then
\begin{equation}\label{eqn:apriori}
\mathrm{val}(S)\le Cn^{(4-d)\ell-2}.
\end{equation}
\begin{proof}
    This is a straightforward induction using Proposition \ref{prop: contract} to contract all internal vertices.
\end{proof}
\end{corollary}

\begin{proposition}\label{prop: interior-delta}
    Suppose $w_1\in \mathbb{Z}^d$. Then, for $u,v\in \mathbb{Z}^d$,
\begin{equation}\label{eqn: interior-delta}
\begin{split}
&\sum_{w\in \mathbb{Z}^d}\langle w-u\rangle^{-d+2}\langle w-v\rangle^{-d+2}\langle w-w_1\rangle^{-d+2}\\
\le &~Cn^2(n^{-d+2}+\langle u-w_1\rangle^{-d+2}+\langle v-w_1\rangle^{-d+2})\langle u -v \rangle^{-d+2}.
\end{split}
\end{equation}
\end{proposition}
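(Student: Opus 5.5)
The plan is to obtain \eqref{eqn: interior-delta} directly from the triple-sum estimate \eqref{eqn: triple-sum}, applied with the three points $u, v, w_1$. After that it is a short case analysis on whether the relevant distances are smaller or larger than $n$. Note that no relation between $n$ and the points is assumed, so $n$ here is simply a free scale parameter. By \eqref{eqn: triple-sum}, the left-hand side is bounded, up to a constant, by the sum of three cyclic terms:
\begin{equation*}
\frac{\min\{\langle u-v\rangle,\langle u-w_1\rangle\}^2}{\langle u-v\rangle^{d-2}\langle u-w_1\rangle^{d-2}}
+\frac{\min\{\langle v-u\rangle,\langle v-w_1\rangle\}^2}{\langle v-u\rangle^{d-2}\langle v-w_1\rangle^{d-2}}
+\frac{\min\{\langle w_1-u\rangle,\langle w_1-v\rangle\}^2}{\langle w_1-u\rangle^{d-2}\langle w_1-v\rangle^{d-2}}.
\end{equation*}
I will bound each term separately by the right-hand side of \eqref{eqn: interior-delta}.

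\emph{The term centered at $u$.} This term already carries the factor $\langle u-v\rangle^{2-d}$, so it remains to bound $\min\{\langle u-v\rangle,\langle u-w_1\rangle\}^2\langle u-w_1\rangle^{2-d}$ by $C n^2(n^{2-d}+\langle u-w_1\rangle^{2-d})$.
\begin{itemize}
\item If $\langle u-w_1\rangle\le n$, bound the minimum by $\langle u-w_1\rangle^2\le n^2$. This gives $n^2\langle u-w_1\rangle^{2-d}$.
\item If $\langle u-w_1\rangle> n$, bound the minimum by $\langle u-w_1\rangle^2$. This gives $\langle u-w_1\rangle^{4-d}\le n^{4-d}=n^2\cdot n^{2-d}$, using $d>4$.
\end{itemize}
The term centered at $v$ is handled identically with the roles of $u$ and $v$ swapped, producing the $\langle v-w_1\rangle^{2-d}$ contribution.

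\emph{The term centered at $w_1$.} This term has no explicit factor $\langle u-v\rangle^{2-d}$, so I recover one from the triangle inequality. Without loss of generality $\langle w_1-u\rangle\le\langle w_1-v\rangle$. Then $|u-v|\le 2|w_1-v|$, so $\langle u-v\rangle\le 2\langle w_1-v\rangle$ and hence $\langle w_1-v\rangle^{2-d}\le C\langle u-v\rangle^{2-d}$. Since the minimum equals $\langle w_1-u\rangle^2$, the term is at most
\begin{equation*}
C\,\langle w_1-u\rangle^{4-d}\,\langle u-v\rangle^{2-d}.
\end{equation*}
The same dichotomy as before applies:
\begin{itemize}
\item if $\langle w_1-u\rangle\le n$, then $\langle w_1-u\rangle^{4-d}=\langle w_1-u\rangle^2\langle w_1-u\rangle^{2-d}\le n^2\langle u-w_1\rangle^{2-d}$;
\item otherwise $\langle w_1-u\rangle^{4-d}\le n^{4-d}$, again using $d>4$.
\end{itemize}

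Summing the three bounds gives \eqref{eqn: interior-delta}. I expect the $w_1$-centered term to be the only delicate point, since it is the one where the factor $\langle u-v\rangle^{2-d}$ is not explicit. The triangle-inequality step handles it, and the constants coming from comparing $|\cdot|$ and $\langle\cdot\rangle$ are harmless.
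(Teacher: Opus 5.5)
Your proof is correct and follows the paper's argument. Like the paper, you apply \eqref{eqn: triple-sum} to $u,v,w_1$, bound the $u$- and $v$-centered terms via the dichotomy $\langle u-w_1\rangle\le n$ versus $>n$ (using $d>4$), and recover $\langle u-v\rangle^{2-d}$ in the $w_1$-centered term by the triangle inequality. The only cosmetic difference is that you split that last term by which of $u,v$ is closer to $w_1$, where the paper compares $|u-w_1|$ with $\tfrac12|u-v|$; your version is, if anything, slightly cleaner.
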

Proposition \ref{prop: interior-delta} has diagrammatic interpretation as contraction along a path $u - w - v$. The right side of  \eqref{eqn: G-prod} replaces the two edges $(u,w)$ and $(w,v)$ with a single edge $(u,v)$, deleting the vertex $w$.

\begin{proof}[Proof of Proposition \ref{prop: interior-delta}]
Applying \eqref{eqn: triple-sum}, we estimate the left-hand side of \eqref{eqn: interior-delta} by the upper bound
\begin{align}
&\langle u-v\rangle^{-d+2}\langle u-w_1\rangle^{-d+4}\label{eqn: 3-first}\\
+~& \langle u-v\rangle^{-d+2}\langle v-w_1\rangle^{-d+4} \label{eqn: 3-2nd}\\
+~& \langle u-w_1\rangle^{-d+2}\langle v-w_1\rangle^{-d+2} \min\{\langle u-w_1\rangle^2, \langle v-w_1\rangle^2\} \label{eqn: 3-last}
\end{align}
If $|u-w_1|\le n$, then the first term \eqref{eqn: 3-first} is bounded by
\[Cn^2\langle u-v \rangle^{-d+2}\langle u-w_1\rangle^{-d+2}.\]
If instead $|u-w_1|>n$, we have
\[\eqref{eqn: 3-first}\le Cn^{-d+4}\langle u-v\rangle^{-d+2},\]
provided $d>4$. An identical argument applies to \eqref{eqn: 3-2nd}, yielding
\[\eqref{eqn: 3-2nd}\le n^2(n^{-d+2}+\langle w-w_1\rangle^{-d+2})\langle u-v\rangle^{-d+2}.\]
For \eqref{eqn: 3-last}, we split into cases according to whether $|u-w_1|> \frac{1}{2}|u-v|$ or $|u-w_1|\le \frac{1}{2}|u-v|$. The latter case implies $|v-w_1|\ge\frac{1}{2}|u-v|$, so that
\[\eqref{eqn: 3-last}\le C\langle u-v\rangle^{-d+2}\langle u-w_1\rangle^{-d+4}.\]
In the former case, we have
\[\eqref{eqn: 3-last}\le C\eqref{eqn: 3-first}.\]
\end{proof}
In a graph consisting of a path decorated by binary branching subtrees, we can combine tree reduction with the above to sum over all unconstrained vertices along a path.

\begin{corollary}[Path reduction]\label{cor: path-red}
 Suppose $|w_1|, \, |w_2|, \, |w_3| \le \epsilon^{-1} n$ and $|w_i-w_j|\ge\epsilon n$ for $i\neq j$. Then, for $u,v\in \mathbb{Z}^d$, we have
\begin{equation}\label{eqn: path-reduction-eq}
\begin{split}
&\langle u-w_2\rangle^{-d+2}\langle v-w_3\rangle^{-d+2} \sum_{w\in \mathbb{Z}^d}\langle w-u\rangle^{-d+2}\langle w-v\rangle^{-d+2}\langle w-w_1\rangle^{-d+2}\\
\le &~Cn^{4-d}(\langle u-w_1\rangle^{-d+2}+\langle u-w_2\rangle^{-d+2})(\langle v-w_3\rangle^{-d+2}+\langle v-w_1\rangle^{-d+2}) \langle u -v \rangle^{-d+2}.
\end{split}
\end{equation}
\end{corollary}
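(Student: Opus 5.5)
The plan is to first apply Proposition~\ref{prop: interior-delta} to the sum over $w$, and then absorb the resulting factor $n^2(\cdots)$ using the two external factors $\langle u-w_2\rangle^{-d+2}\langle v-w_3\rangle^{-d+2}$ together with the separation of $w_1,w_2,w_3$. Proposition~\ref{prop: interior-delta} bounds the left-hand side of \eqref{eqn: path-reduction-eq} by
\[
Cn^2\langle u-v\rangle^{-d+2}\,\langle u-w_2\rangle^{-d+2}\langle v-w_3\rangle^{-d+2}\big(n^{-d+2}+\langle u-w_1\rangle^{-d+2}+\langle v-w_1\rangle^{-d+2}\big).
\]
The three terms in the parenthesis are treated separately. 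In each case the goal is to exhibit one factor of size $O(n^{2-d})$, which combines with $n^2$ to give $n^{4-d}$. The remaining product must then be of the form $(\langle u-w_1\rangle^{-d+2}+\langle u-w_2\rangle^{-d+2})(\langle v-w_3\rangle^{-d+2}+\langle v-w_1\rangle^{-d+2})$.

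\emph{First term.} Here the factor $n^{-d+2}$ is already explicit. The product $\langle u-w_2\rangle^{-d+2}\langle v-w_3\rangle^{-d+2}$ is one of the four cross terms of the target, so this term is immediate.

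\emph{Second term.} This term is $\langle u-w_1\rangle^{-d+2}\langle u-w_2\rangle^{-d+2}$, multiplied by $\langle v-w_3\rangle^{-d+2}$. Since $|w_1-w_2|\ge\epsilon n$, the triangle inequality gives $\max\{|u-w_1|,|u-w_2|\}\ge \epsilon n/2$. Whichever of the two factors has the larger argument is therefore at most $C(\epsilon)n^{2-d}$, and the other factor survives. This leaves $Cn^{4-d}(\langle u-w_1\rangle^{-d+2}+\langle u-w_2\rangle^{-d+2})\langle v-w_3\rangle^{-d+2}$, which has the required form.

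\emph{Third term.} This term is $\langle v-w_1\rangle^{-d+2}\langle v-w_3\rangle^{-d+2}$, multiplied by $\langle u-w_2\rangle^{-d+2}$. The same argument applies with $|w_1-w_3|\ge\epsilon n$: one of $|v-w_1|,|v-w_3|$ is at least $\epsilon n/2$. We obtain $Cn^{4-d}\langle u-w_2\rangle^{-d+2}(\langle v-w_1\rangle^{-d+2}+\langle v-w_3\rangle^{-d+2})$, again of the required form.

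Summing the three cases and keeping the common factor $\langle u-v\rangle^{-d+2}$ yields \eqref{eqn: path-reduction-eq}. The constant depends only on $\epsilon$ and $d$; the hypothesis $|w_i|\le\epsilon^{-1}n$ is not even needed beyond what is already used in Proposition~\ref{prop: interior-delta}.

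There is no real obstacle here; the only point requiring care is the pairing in the second and third terms. The factor involving $w_1$ must be matched with the external factor at the \emph{same} endpoint ($u$ with $w_2$, and $v$ with $w_3$), so that the separation of the corresponding pair of $w$'s yields the $n^{2-d}$ gain. In particular, we do not need the pair $w_2,w_3$.
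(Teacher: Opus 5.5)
Your proposal is correct and follows the paper's own proof. Like the paper, you apply Proposition~\ref{prop: interior-delta} and then use the separations $|w_1-w_2|\ge\epsilon n$ (at $u$) and $|w_1-w_3|\ge\epsilon n$ (at $v$) to trade one factor for $C(\epsilon)n^{2-d}$. Your explicit three-term bookkeeping, and your triangle-inequality bound $\epsilon n/2$ in place of the paper's $\epsilon n$, only make the argument more precise.
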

This corollary says that we can sum over the interior vertex of a path in a binary tree (after having performed tree reduction on the dangling trees) and obtain a diagram of the same form with the vertex removed but with an added factor $n^{4-d}$.
\begin{proof}[Proof of Corollary \ref{cor: path-red}]
    This follows from Proposition \ref{prop: interior-delta} and the separation condition on the $w_i$, which ensures
    \[\max\{|u-w_1|,|u-w_2|\}\ge \epsilon n\] and 
    \[\max\{|v-w_1|,|v-w_3|\}\ge \epsilon n,\]
    since these imply
\[\langle u-w_2\rangle^{-d+2}\langle u-w_1\rangle^{-d+2}\le C(\epsilon) n^{-d+2}(\langle u-w_2\rangle^{-d+2}+\langle u-w_1\rangle^{-d+2}).\]
Similarly,
\[\langle v-w_3\rangle^{-d+2}\langle v-w_1\rangle^{-d+2}\le C(\epsilon)n^{-d+2}(\langle v-w_3\rangle^{-d+2}+\langle v-w_1\rangle^{-d+2}).\]
The claimed result follows at once.
\end{proof}

\begin{proposition}
    \label{cor: cycle-negl}
    Suppose $S$ is an undirected graph with $k+1$ leaves. We assume $S$ contains a cycle
\[\gamma= (u_0=u,u_1,\ldots,u_\ell,u_{\ell+1}=u_0)\]
with $\ell \ge 3$. To each $u_i$, $i=0,\ldots,\ell$ is attached a binary branching subtree $T_i$ rooted at $u_i$, such that removing the edges of $\gamma$ leaves a disjoint collection of subtrees. Define $\mathrm{val}(S)$ by \eqref{eqn: val-def}, with the admissible $\psi$ mapping the leaves to $\{w_0,\ldots, w_{k}\}$; assume that $\min\{|w_i - w_j| \} \geq \epsilon n$ and $\max\{ |w_i|\} \leq \epsilon^{-1} n$. Then, for $d>6$
    \begin{equation}\label{eqn: cycle-val}
    \mathrm{val}(S)\le Cn^{(4-d)k-2}\times \begin{cases}
    n^{-1}& \quad d=7\\
    (\log n) n^{-2}& \quad d =8\\
    n^{-2}&\quad d>8
    \end{cases}.
    \end{equation}
\end{proposition}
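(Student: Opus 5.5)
First contract every attached subtree, then bound the bare cycle. Each $T_i$ is a binary branching tree rooted at $u_i$ with leaves among the $w_j$. We sum over its internal vertices using Corollary~\ref{cor: tree-reduct}, applied recursively via Proposition~\ref{prop: contract} from the leaves inward. This replaces $T_i$ by a single edge from $u_i$ to one of its leaves, $w_{j(i)}$ say. The cost is a factor $C n^{(4-d)(\ell_i-1)}$, where $\ell_i$ is the number of leaves of $T_i$, and there is a finite sum over the choice of $j(i)$. (If $T_i$ is trivial, $u_i$ is itself a leaf and nothing is done.)

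After this step, $\mathrm{val}(S)$ is bounded by a finite sum of valuations of \emph{sunflower} diagrams. Each consists of the cycle $u_0,\dots,u_\ell$, with each $u_i$ joined by one edge to a leaf $w_{j(i)}$ (or pinned at a leaf). Counting exponents, it suffices to show that such a sunflower with $m\le \ell+1$ petals at distinct, far-separated leaves satisfies
\[
\mathrm{val}\le C n^{(4-d)(m-1)-2}\cdot n^{-\delta_d},
\]
where $n^{-\delta_d}$ is the dimension-dependent gain in \eqref{eqn: cycle-val}. The tree with the same leaves has value $n^{(4-d)(m-1)-2}$ by \eqref{eqn:apriori}, so the task is to show that the extra loop costs $n^{-\delta_d}$.

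Next, reduce the cycle to a triangle. Pick cycle vertices whose petals go to pairwise distinct leaves. If several cycle vertices share a leaf, first sum over the repeated ones with the standard convolution \eqref{eqn: std-conv}; this only produces chains of cycle edges with exponents $-d+2k$. Then sum over the interior vertices $u_i$ along the cycle one at a time using Corollary~\ref{cor: path-red}. Each application deletes one vertex, merges its two cycle edges into one, and contributes $n^{4-d}$ times a choice of which neighbouring leaf the endpoint attaches to. This continues until a triangle $u_0u_1u_2$ remains, with each vertex attached by one edge to a leaf. The triangle is the configuration of Lemma~\ref{lem: Diagram}(3).

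The triangle is the main obstacle. We must bound
\[
\sum_{a,b,c}\langle a-b\rangle^{2-d}\langle b-c\rangle^{2-d}\langle c-a\rangle^{2-d}\langle a-w_1\rangle^{2-d}\langle b-w_2\rangle^{2-d}\langle c-w_3\rangle^{2-d},
\]
and the target is $n^{2(4-d)-2}\cdot n^{-\delta_d}$, against $n^{2(4-d)-2}$ for a tree with three leaves. First sum over $c$ with \eqref{eqn: triple-sum}, which reduces the triangle to an effective edge $\langle a-b\rangle^{4-d}$ plus terms pinned near $w_3$. The diagram becomes a tree whose central edge carries $\langle a-b\rangle^{6-2d}$ instead of the doubled structure. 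Then split according to whether $|a-b|$ is small or comparable to $n$. When $|a-b|\le n$, sum the bubble-like factor $\sum_{z}\langle z\rangle^{6-2d}$ restricted to $|z|\le n$ (a triangle-type sum). The remaining contraction of the cherry at $a\approx b$ gives $n^{4-d}$ from Proposition~\ref{prop: contract}. The loss against the tree comes from the mismatch $\sum_{|z|\le n}\langle z\rangle^{-d+2}\langle z\rangle^{4-d}$ versus $n^{2}$:
\begin{itemize}
\item for $d=7$ this sum is $\sim n^{-1}\cdot n^{2}$, giving a gain of $n^{-1}$;
\item for $d=8$ it is $\log n$, giving $(\log n)n^{-2}$;
\item for $d>8$ it is bounded, giving $n^{-2}$.
\end{itemize}
When $|a-b|\ge \epsilon n$, every factor is at scale $n$, and direct counting gives an extra $n^{6-d}\le n^{-1}$ or better.

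I expect the delicate part to be the bookkeeping of the logarithmic case $d=8$, and checking that the cases generated by \eqref{eqn: triple-sum} (which vertex $c$ is closest to) each reproduce the stated rate. Finally, sum the finitely many contraction choices.
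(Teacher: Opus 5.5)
\textbf{There is a genuine gap: the three-vertex cycle you reduce to cannot carry the gain.} Your final step needs the three-petal triangle to be at most $n^{2(4-d)-2}\cdot n^{-\delta_d}$, and this is false. Restricting the sum to $a=b=c$ already gives
\[
\sum_{a}\langle a-w_1\rangle^{2-d}\langle a-w_2\rangle^{2-d}\langle a-w_3\rangle^{2-d}\ \ge\ c\,n^{6-2d}=c\,n^{2(4-d)-2}.
\]
This is exactly the order of the three-leaf tree. For $d>6$ the triangle diagram is finite, so a small triangle at a branch point is just a decorated binary vertex.

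Each application of Corollary~\ref{cor: path-red} costs exactly $n^{4-d}$, the same price as a tree branch point. So any chain of bounds that passes through a triangle returns at best $n^{(4-d)(k-2)}\cdot n^{6-2d}=n^{(4-d)k-2}$, with no gain.

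Your bookkeeping hides this in two places. After the $c$-sum the central factor is $\langle a-b\rangle^{6-2d}$, whose sum is $O(1)$ in every $d>6$, so nothing dimension-dependent can come out of it. The $n^2$ you compare against would come from an internal tree edge, which a three-leaf tree does not have. The rates $n$, $\log n$, $O(1)$ you quote are those of $\sum_{|z|\le n}\langle z\rangle^{8-2d}$, not of $\langle z\rangle^{6-2d}$. Note also that the configuration in Lemma~\ref{lem: Diagram}(3) is the four-vertex cycle through $v,a,b,c$, not a triangle.

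\textbf{The missing idea is that the gain lives in four-valent structure.} This is why the statement requires $\ell\ge 3$, i.e.\ a cycle with at least four vertices, and you must not shorten it below four.

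The paper proceeds as follows:
\begin{enumerate}
\item Contract the subtrees with Corollary~\ref{cor: tree-reduct}, then path-reduce the cycle only down to a $4$-cycle $z_0z_1z_2z_3$. The accumulated prefactor is $n^{(4-d)(k-3)}$.
\item Sum over the two opposite vertices $z_1$ and $z_3$ with \eqref{eqn: triple-sum}. Each sum produces $\langle z_0-z_2\rangle^{4-d}$ times a leaf factor.
\item The separation of the $w_j$ extracts $n^{4-2d}$. This leaves
\[
\sum_{z_0,z_2}\langle w_i-z_0\rangle^{2-d}\langle z_0-z_2\rangle^{8-2d}\langle z_2-w_j\rangle^{2-d},
\]
which is bounded by Lemma~\ref{prop: one-loop}.
\end{enumerate}
The doubled edge $\langle z_0-z_2\rangle^{8-2d}$ is what produces the three rates $n^{-1}$, $(\log n)n^{-2}$ and $n^{-2}$. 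It measures the cost of a four-valent junction, possibly stretched to size up to $n$, relative to two binary branch points. A three-cycle has no such junction.
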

    \begin{proof}

We denote by $\mathcal{L}_i$ the leaves of $S$ contained in $S_i$ and set $m_i=\#\mathcal{L}_i$, so that
\begin{equation}\label{eqn: mi-sum2}
\sum_{i=0}^{\ell}m_i=k+1.
\end{equation}
We define the valuation of $S$ and the subtrees $T_i$ by the formula \eqref{eqn: val-def}. We have, with the assignment $z_i=\psi(u_i)$, and using the notation $\mathrm{val}(T_i)(z_i)$ to denote the dependence of this valuation on the location of the vertex $z_i$,
\[\mathrm{val}(S)\le \sum_{z_0,z_1,\ldots, z_\ell} \prod_{i=1}^{\ell+1}\mathrm{val}(T_i)(z_i)\langle z_i-z_{i-1}\rangle^{-d+2}.\]
Applying Corollary \ref{cor: tree-reduct} in each subtree $T_i$, $i=0,\ldots,\ell$, we find the bound
\[Cn^{(4-d)\sum_{i=0}^{\ell}( m_i-1)}
\sum_{j_0\in \mathcal{L}_0,j_1\in \mathcal{L}_1,\ldots, j_i\in \mathcal{L}_i}\sum_{z_0,z_1,\ldots, z_\ell}\langle z_0-z_\ell\rangle^{-d+2}\langle z_0-x_{j_0}\rangle^{-d+2}\prod_{i=1}^{\ell}\langle z_i-z_{i-1}\rangle^{-d+2}\langle z_i-w_{j_i}\rangle^{-d+2}.\]
We then use Corollary \ref{cor: path-red} to sum over $z_{4},\ldots,z_{\ell}$, leaving only 4 vertices in the cycle:
\begin{equation}\label{eqn: 4-cycle}
Cn^{(4-d)\sum_{i=0}^{\ell}( m_i-1)}n^{(4-d)(\ell-3)}
\sum_{\substack{j_0,j_1,j_2,j_3\\\text{ distinct}}}\sum_{z_0,z_1,z_2,z_3}\prod_{i=0}^{3}\langle z_i-z_{i-1}\rangle^{-d+2}\langle z_i-w_{j_i}\rangle^{-d+2},
\end{equation}
where we set $z_{-1}:=z_3$. The prefactor is
\begin{equation}\label{eqn: pre-factor0}
n^{(4-d)\sum_{i=0}^{\ell}( m_i-1)}n^{(4-d)(\ell-3)}=n^{(4-d)(k-3)}.
\end{equation}
We now sum over $z_1$ in \eqref{eqn: 4-cycle}, using \eqref{eqn: triple-sum}:
\begin{align*}
&\sum_{z_1}\langle z_1-z_0\rangle^{-d+2}\langle z_2-z_1\rangle^{-d+2}\langle w_{j_1}-z_1\rangle^{-d+2}\\
\le~&C\langle z_0-z_2\rangle^{-d+4}(\langle z_0-w_{j_1}\rangle^{-d+2}+\langle z_2-w_{j_1}\rangle^{-d+2})\\
+~&C\langle z_0-w_{j_1}\rangle^{-d+2}\langle z_2-w_{j_1}\rangle^{-d+2}\min\{\langle z_0-w_{j_1}\rangle^2,\langle z_2-w_{j_1}\rangle^2\}.
\end{align*}
When $|z_0-w_{j_1}|\ge \frac{1}{2}|z_0-z_2|$,
then the last line is
\[\le C\langle z_0-z_2\rangle^{-d+4}\langle z_2-w_{j_1}\rangle^{-d+2},\]
while if $|z_0-w_{j_1}|< \frac{1}{2}|z_0-z_2|$, we have the bound
\[\le C\langle z_0-z_2\rangle^{-d+4}\langle z_0-w_{j_1}\rangle^{-d+2}.\]
Put together, we obtain
\begin{equation}\label{eqn: z_1-sum}
\begin{split}
&\sum_{z_1}\langle z_1-z_0\rangle^{-d+2}\langle z_2-z_1\rangle^{-d+2}\langle w_{j_1}-z_1\rangle^{-d+2}\\
\le~&C\langle z_0-z_2\rangle^{-d+4}(\langle z_0-w_{j_1}\rangle^{-d+2}+\langle z_2-w_{j_1}\rangle^{-d+2})
\end{split}
\end{equation}

Similarly, we sum over $z_3$ to find
\begin{equation}\label{eqn: z_3-sum}
\begin{split}
&\sum_{z_3}\langle z_3-z_0\rangle^{-d+2}\langle z_2-z_3\rangle^{-d+2}\langle w_{j_3}-z_3\rangle^{-d+2}\\
\le~&C\langle z_0-z_2\rangle^{-d+4}(\langle z_0-w_{j_3}\rangle^{-d+2}+\langle z_2-w_{j_3}\rangle^{-d+2}).
\end{split}
\end{equation}
Combining \eqref{eqn: z_1-sum}, \eqref{eqn: z_3-sum} and the separation $|w_i-w_j|\ge \epsilon n$, we find an estimate
\begin{align}
&\sum_{z_1,z_3}\prod_{i=0}^{3}\langle z_i-z_{i-1}\rangle^{-d+2}\langle z_i-w_{j_i}\rangle^{-d+2}\nonumber\\
\le~& Cn^{-2d+4}\sum_{\substack{A,B\subset\{0,\ldots k\}\\A\cap B=\emptyset}}\sum_{i\in A, j\in B} \langle z_0-w_{i}\rangle^{-d+2}\langle z_0-z_2\rangle^{-2d+8}\langle z_2-w_{j}\rangle^{-d+2}\label{eqn: cycle-pre0}
\end{align}
Lemma~\ref{prop: one-loop} evaluates the sum over $z_0$ and $z_2$ in \eqref{eqn: cycle-pre0}. Combined with the factor in \eqref{eqn: pre-factor0}, we obtain that \eqref{eqn: 4-cycle}, and thus $\mathrm{val}(S)$, is bounded by
    \[\mathrm{val}(S)\le Cn^{(4-d)(k-3)}n^{-2d+4}n^{-d+4}=Cn^{(4-d)k-2}\]
    if $d>8$,
    \[\mathrm{val}(S)\le C(\log n)n^{(4-d)(k-3)}n^{-2d+4}n^{-d+4}=C(\log n)n^{(4-d)k-2}\]
    if $d=8$
    and
    \[\mathrm{val}(S)\le Cn^{(4-d)(k-3)}n^{-2d+4}n^{-2d+12}=Cn^{(4-d)k-(d-6)}\]
    if $6<d< 8$.
    \end{proof}

\begin{lemma}[One loop diagram]\label{prop: one-loop}
Let $|x_1-x_2|\ge \epsilon n$, and define
\[D_{\mathrm{loop}}:=\sum_{z_0,z_2}\langle w_1-z_0\rangle^{-d+2}\langle z_0-z_2\rangle^{-2d+8}\langle z_2-w_2\rangle^{-d+2}.\]
Then, we have
\[D_{\mathrm{loop}}\le C\begin{cases}
    n^{-d+4} & \quad d>8,\\
    n^{-4}\log n& \quad d=8,\\
    n^{-2d+12}& \quad 6<d<8
\end{cases}.\]
\begin{proof}
The case $6<d<8$ is a direct application of \eqref{eqn: std-conv} (twice). The case $d>8$ follows from 
\[\sum_{z_2} \langle z_0-z_2\rangle^{-d-(d-8)}\langle z_2-w_2\rangle^{-d+2}\le C\langle z_0-w_2\rangle^{-d+2}\]
followed by an application of \eqref{eqn: std-conv}. For $d=8$, we first use \eqref{eqn:log-conv}:
\[\sum_{z_2}\langle z_0-z_2\rangle^{-d}\langle z_2-w_2\rangle^{-d+2}\le C\frac{\log\langle w_2-z_0\rangle}{\langle w_2-z_0\rangle^{d-2}}.\]
The result then follows by summing over $z_0$.
\end{proof}
\end{lemma}

\subsubsection{Proof of Lemma \ref{lem:binbran}}
\begin{proof}
If there is an $i\in \{1,\ldots, k\}$ such that $x_i$ is an interior vertex of $T(\omega)$, then there is a decomposition  $A\cup B$ of $\{x_0,\ldots, x_k\}$ such that $A\cap B=\{x_i\}$ with $|A|,|B|<k$ and $x_j \prec x_i$ for $j\in A$ , and for all $j\in B$, we have $m_{ij}=x_0$, in particular $\{x_j\lra x_0\}\circ\{x_i \lra x_0\}$,. This implies the occurrence of the event
\[\Gamma(A)\circ \Gamma(B).\]
From this, by applying the BK inequality, we obtain the bound
\begin{align*}
\mathbb{P}(\Gamma(x_0,\ldots,x_k), \exists \,1\le i\le k: x_i \text{ not a leaf in } T)\le&~ \sum_{A\cup B=\{x_0,\ldots,x_k\}}\mathbb{P}(\Gamma(A)\circ \Gamma(B))\\
\le &~\sum_{A\cup B=\{x_0,\ldots,x_k\}}\mathbb{P}(\Gamma(A))\mathbb{P}(\Gamma(B))\\
\le&~Cn^{(4-d)(\#A-1)-2+(4-d)(\#B-1)-2}\\
\le&~Cn^{(4-d)k-4}.
\end{align*}
In the last step we used the Aizenman-Newman tree diagram bound \cite[(4.3)]{AN} (or, equivalently point 1. of Lemma \ref{lem: Diagram}) to estimate the probability that the vertices in $A$, resp. $B$ are all connected:
\[\tau_\ell(x_0,\ldots,x_{\ell-1})\le{\sum_{G}}' \sum_{y_1,\ldots,y_{\ell-2}}\prod_{\{z,z'\}\in E(G)}\tau(z,z'),\]
where the outer sum is over binary branching graphs $G$ with leaves $x_0,\ldots,x_{\ell-1}$ and internal vertices $y_1,\ldots,y_{\ell-2}$. The sums are estimated by \eqref{eqn:apriori} in Corollary \ref{cor: tree-reduct}.

If $x_0$ is a non-leaf vertex of $T$, then there are two disjoint subtrees of the connectivity tree rooted at $x_0$, and the same argument as above applies to show
\[\mathbb{P}(x_0 \text{ not a leaf })\le Cn^{(4-d)k-4}.\]

We may thus assume that the $k+1$ leaves of $T$ are $\{x_0,\ldots,x_k\}$. The result now follows from Proposition \ref{cor: cycle-negl}, since
\[\mathbb{P}(E_{\mathrm{deg}}, \Gamma(x_0,\ldots,x_k), \text{leaves}(T)={x_0,\ldots,x_k})\le \sum_{S\in \mathcal{S}}\mathrm{val}(S),\]
where $S$ is the set of graphs with $k+1$ leaves $\{0,\ldots,k\}$ containing a cycle as in that proposition.
\end{proof}

\subsection{The Inductive Step \label{sec:inductive}}
We decompose $\prob(\Gamma(x_0, \dots, x_k))$ into a sum over trees. The induction will essentially show that Theorem~\ref{thm:kpt} holds for a given value of $k$, assuming that it holds for all smaller values of $k$. In the process of the induction, we remove a number of error terms, similar to our argument for the case $k = 2$. The remaining portion of $\prob(\Gamma(x_0, \dots, x_k))$ after removing error terms will be shown to converge to the quantity appearing in Theorem~\ref{thm:kpt}.  This term, and hence $\tau_{k+1}(x_0, \dots, x_k)$,  is at least $\min\{|x_i - x_j|: \, i \neq j\}^{(4-d)k - 2}$ for all $k$ and all $x_0, \dots, x_k$, as indicated in Theorem~\ref{thm:kpt}. The error terms will be shown to be of smaller order, and hence not survive in the limit.

If $T \in \mathfrak{T}_{k+1}$, we write
\[\tau_T(x_0, \dots, x_k) = \prob(\Gamma(x_0, \dots, x_k), \mathcal{T}(\omega) = T)\ .\]
Lemma~\ref{lem:binbran} shows that for each fixed $\epsilon > 0$, if $\min\{|x_i - x_j|\} \geq \epsilon n$, we have
\[\tau_{k+1}(x_0, \dots, x_k) = \sum_{T \in \mathfrak{T}_{k+1}} \tau_T(x_0, \dots, x_k) + C n^{(4-d)k - 4}. \]

\begin{proposition}\label{prop: tree-reduct} For each $k \geq 3$,
\begin{equation}
\label{eq:inedge}
\begin{gathered}
\text{each $T \in \mathfrak{T}_k$ has a non-leaf vertex $v$ whose two in-edges originate from leaves labeled $I, J \neq 0$.}
\end{gathered}
\end{equation}
\end{proposition}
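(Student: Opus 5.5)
The statement is purely combinatorial: every rooted binary tree with at least three leaves contains a ``cherry'' whose two leaves are both different from the root label $0$. I will argue by a depth (or extremality) argument on $T\in\mathfrak{T}_k$, directed toward the leaf $0$.

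First I pin down the structure of $T$. Every edge is oriented toward $0$, and every non-leaf vertex has in-degree two. Apart from the leaf $0$, every vertex has out-degree one, namely the edge to its parent. The leaf $0$ is the root and has out-degree zero and in-degree one, so its unique neighbor $r$ is an interior vertex. Removing $0$ leaves a rooted tree at $r$ in which every interior vertex has exactly two children and the leaves are labeled $1,\dots,k-1$. Since $k\ge 3$, this subtree has at least two leaves, so $r$ is indeed interior. In fact the count of in-degrees forces this: with $m$ interior vertices, each of in-degree two, we have $\#E=2m=\#V-1=k+m-1$, so $m=k-1\ge 2$.

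Next I choose an interior vertex $v$ of the subtree rooted at $r$ at maximal distance from $r$ (equivalently, from $0$). Its two in-neighbors are at strictly greater distance from $r$, so by maximality neither of them can be interior, and both must be leaves. Neither of these leaves is $0$, because $0$ lies outside the subtree; in particular $0$ is the parent side of $r$, not a child of anything. So the two in-edges of $v$ come from leaves labeled $I,J\neq 0$, which is exactly \eqref{eq:inedge}.

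There is no real obstacle here. The only point needing care is ruling out $0$ as a child leaf, which holds because $0$ is the sink of the orientation and has no out-edge.
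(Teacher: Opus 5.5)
Your argument is correct and is essentially the paper's extremality argument: the paper takes a leaf $I$ at maximal graph distance from $0$ and uses its neighbor $v$, while you take a deepest interior vertex, which makes the fact that both in-neighbors are leaves immediate. One slip in your side remark: the root leaf $0$ has in-degree one, so the count is $\#E=2m+1=k+m-1$, giving $m=k-2$ (a single interior vertex when $k=3$), not $m=k-1\ge 2$; this aside is not needed, since $r$ being interior already follows from $k\ge 3$.
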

\begin{proof}
Letting $I$ be the label of a leaf at furthest graph distance in $T$ from the vertex labeled $0$, and letting $v$ be the vertex adjacent to the vertex labeled $J$, we see this $v$ satisfies \eqref{eq:inedge}.
\end{proof}

By the previous proposition, there is a largest $I = I(T)$ such that the vertex labeled $I$ appears as a leaf as in \eqref{eq:inedge}. For each $k$ and each $T \in \mathfrak{T}_k$, we fix an arbitrary choice of such $I(T)$ once and for all; we then write $J(T)$ and $V(T)$ for the corresponding vertices $j, v$ as above.
For $k \geq 4$ and $T \in \mathfrak{T}_k$, we set $\phi(T)$ to be the tree in $\mathfrak{T}_{k-1}$ obtained by 
\begin{itemize}
\item Deleting the vertices labeled $I$ and $J$, along with their edges to $v$;
\item Decrementing the labels of already labeled vertices to fill gaps while preserving relative order, and then labeling $v$ by $k-2$.
\end{itemize}

We write $\phi(\ell)$ for the label assigned in $\phi(T)$ to the vertex labeled by $\ell$ in $T$; we treat $\phi(I)$ and $\phi(J)$ as empty, so that $(\phi(0), \dots, \phi(k-1))$ is a vector with $k-2$ entries.

\begin{theorem} \label{thm:treeinduct}
Fix $k \geq 3$, $T \in \mathfrak{T}_{k+1}$, and $\epsilon > 0$. 
Suppose that there exists a $c > 0$ such that
\begin{equation}\label{eqn: inductive-num-lwr-bd}
\liminf_{n \to \infty} \inf_{(x_0, \dots, x_{k-1}) \in G(\epsilon, n)} \frac{\tau_{\phi(T)}(x_0, \dots, x_{k-1})}{n^{(4-d)(k-1) - 2}} \geq c\ . 
\end{equation}

Then we have
\[\lim_{n \to \infty} \sup_{(x_0, \dots, x_k) \in G(\epsilon, n)} \left|\frac{\tau_T(x_0, \dots, x_k)}{  2d \beta \alpha^2 \rho \sum_{v \in \Z^d} \tau_{\phi(T)}(x_{\phi(0)}, \dots, x_{\phi(k)}, v) \langle x_{J} - v\rangle^{2-d} \langle x_{I} - v\rangle^{2-d}}- 1 \right| = 0  , \ \]
where $I = I(T)$ and $J = J(T)$.
\end{theorem}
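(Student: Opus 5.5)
We follow the template of the proof of Proposition~\ref{prop:threept}, now with the cherry $(x_I,x_J)$ at $v=V(T)$ in place of $(x_1,x_2)$. On $\{\mathcal{T}(\omega)=T\}$, the vertex $v$ is $m_{IJ}$, the tail of the last common pivotal $g=\opiv(x_0,\{x_I,x_J\})$. We therefore write $\tau_T(x_0,\dots,x_k)=\sum_{g}\prob(\Gamma(x_0,\dots,x_k),\,\mathcal{T}=T,\,g=\opiv(x_0,\{x_I,x_J\}))$.

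\textbf{Switching and reduction.} We apply Lemma~\ref{lem:newcut} to close $g$. This rewrites each term as $\beta$ times the probability of three conditions: $\Gamma(\overline{g},x_I,x_J)$ holds with $\mathcal{P}_{\overline{g}}$; the remaining points $\{x_\ell\}_{\ell\ne I,J}\cup\{\underline{g}\}$ are connected off $\clust(\overline{g})$; and the connectivity tree of this configuration is $\phi(T)$, with $\underline{g}$ playing the role of the new leaf.

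\textbf{Near regime.} As in Lemma~\ref{lem:nearregime}, we discard the $g$ within $\epsilon' n$ of some $x_\ell$ or outside $B(\epsilon'^{-1}n)$. The BK inequality together with Corollary~\ref{cor: tree-reduct} bounds this contribution by $C\epsilon'^2 n^{(4-d)k-2}$. The hypothesis \eqref{eqn: inductive-num-lwr-bd}, combined with the restriction to $G(\epsilon,n)$, shows that the denominator is at least $c'n^{(4-d)k-2}$ uniformly, so discarding these $g$ costs only a vanishing relative error. The same bound shows that the $v$ with $\langle x_I-v\rangle$ or $\langle x_J-v\rangle$ small contribute negligibly to the denominator.

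\textbf{Truncation and decoupling.} For $g$ in the far regime, we use Lemma~\ref{lem: GT-truncation} to replace $\clust(\overline{g})$ by its truncation $\wto{\clust}_{B(\overline{g};2K)}(\overline{g})$ in an independent copy $\wto{\prob}$, at cost $CK^{(6-d)/d}$ relative to the main term. Conditioning on the value $\mathcal{D}$ of this truncated cluster, the $\prob$-part is the event $\Gamma(\{x_\ell\}_{\ell\ne I,J},\underline{g})$ off $\mathcal{D}$ with tree $\phi(T)$. Lemma~\ref{lem:iicnew2} then gives that this probability is asymptotic to $\tau_{\phi(T)}(\dots,\underline{g})\,\nu_{\underline{g}}(\Xi_{\underline{g}}(\mathcal{D}))$, uniformly over $\mathcal{D}\subseteq B(\overline{g};2K)$ and over far-regime $g$. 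This is exactly why the multi-point IIC convergence is needed here.

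We also need the tree constraint to behave. Requiring $\mathcal{T}=T$ forces the reduced configuration to have tree $\phi(T)$, and conversely, up to errors with a degenerate tree, which are controlled by Lemma~\ref{lem:binbran}. The $\wto{\prob}$ side then reproduces $\Pi(g)$ from \eqref{eq:pig}, with $\tau(0,\underline{g})$ replaced by $\tau_{\phi(T)}$. Lemma~\ref{lem:genlem1} gives $\Pi$-type factor $\rho\,\tau(\overline{g},x_I)\tau(\overline{g},x_J)$ asymptotically, uniformly in $g$. Using \eqref{eqn: alphadef}, we replace $\tau(\overline{g},x_I)\tau(\overline{g},x_J)$ by $\alpha^2\langle x_I-v\rangle^{2-d}\langle x_J-v\rangle^{2-d}$, where $v=\underline{g}$; this is uniform since the distances are at least $\epsilon'n$. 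Summing over the $2d$ edges $g$ with $\underline{g}=v$ produces the factor $2d$, and $\tau_{\phi(T)}$ changes by a relative $o(1)$ under unit shifts of $v$.

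\textbf{Main obstacle.} The hardest step is the uniform application of Lemma~\ref{lem:iicnew2}. Conditioning on a $(k-1)$-point connection with a prescribed tree shape, rather than on a single long arm, must still yield IIC local statistics near $\underline{g}$, with errors uniform in $\mathcal{D}$, in $g$, and over $G(\epsilon,n)$. It is also delicate to check that the event $\{\mathcal{T}=T\}$ is compatible with the local modification at $g$; we would handle this by showing the events differ only on $E_{\mathrm{deg}}$ or on events with an extra disjoint connection near $g$, which BK bounds by $o(n^{(4-d)k-2})$. Taking $n\to\infty$, then $K\to\infty$, then $\epsilon'\to0$ concludes.
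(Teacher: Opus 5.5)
Your proposal is correct and follows essentially the same route as the paper: switching at $g=\opiv(x_0,\{x_I,x_J\})$ via Lemma~\ref{lem:newcut}, discarding near-regime edges, replacing $\clust(\overline{g})$ by its truncation via Lemma~\ref{lem: GT-truncation} (made relative through \eqref{eqn: inductive-num-lwr-bd}), decoupling with Lemma~\ref{lem:iicnew2}, and extracting $\rho$ and the factor $2d$ from Lemma~\ref{lem:genlem1}. The differences are minor. You control the near regime with the first part of Corollary~\ref{cor: tree-reduct} (taking $\underline{g}$ as the unconstrained leaf) and a separate parameter $\epsilon'$, where the paper uses Lemma~\ref{lem:apriori} with $\epsilon^R$; and your remark about unit shifts of $\tau_{\phi(T)}$ is not needed, since $\tau_{\phi(T)}$ is already evaluated at $\underline{g}=v$ and only the two-point factors must be moved from $\overline{g}$ to $v$.
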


We begin with the following a priori bound on the $k$-point function: 
\begin{lemma} \label{lem:apriori}
    For each $\epsilon > 0$, there is a $C_k=C_k(\epsilon)$ such that if $(x_1, \dots, x_k) \in G(\epsilon, n)$ then 
\[\tau_{k+1}(x_0, x_1, \dots, x_k) \leq C_k(\epsilon) \mathrm{dist}(x_0, \{x_1, \dots, x_k\})^{4-d} n^{(4-d) (k-1) - 2}. \]
Dependence of $C(\epsilon)$ on $\epsilon$ is polynomial:
\begin{equation}\label{eqn:poly-C}
C_k(\epsilon)\le c\epsilon^{-10dk},
\end{equation}
where $c$ does not depend on $\epsilon$.
\end{lemma}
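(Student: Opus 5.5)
Start from the Aizenman--Newman tree-graph bound already used in the proof of Lemma~\ref{lem:binbran}:
\[
\tau_{k+1}(x_0,\dots,x_k)\le {\sum_{G}}'\sum_{y_1,\dots,y_{k-1}}\prod_{\{z,z'\}\in E(G)}\tau(z,z'),
\]
where $G$ ranges over the finitely many (a number depending only on $k$) binary branching trees with leaves $x_0,\dots,x_k$. Each $\tau$ is then replaced by $C\langle z-z'\rangle^{2-d}$ using \eqref{eqn: HHS}. The hypothesis of the lemma constrains only $x_1,\dots,x_k$; nothing prevents $x_0$ from being close to some $x_i$. We therefore treat $x_0$ as the distinguished leaf $v$ of Corollary~\ref{cor: tree-reduct}. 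More precisely, we apply that corollary with $\ell=k$ and with $w_1,\dots,w_k=x_1,\dots,x_k$, which we may assume are pairwise $\epsilon n$-separated and lie within $\epsilon^{-1}n$ of the origin. This yields
\[
\mathrm{val}(G)\le C(\epsilon,k)\,n^{(4-d)(k-1)}\sum_{i=1}^k\langle x_0-x_i\rangle^{2-d}
\le C(\epsilon,k)\,k\,n^{(4-d)(k-1)}\,\mathrm{dist}(x_0,\{x_1,\dots,x_k\})^{2-d}.
\]

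This is not yet the claimed form. The target is $\mathrm{dist}^{4-d}\, n^{(4-d)(k-1)-2}$, and the bound above differs from it by the factor $(n/\mathrm{dist})^{2}$. When $\mathrm{dist}\le n$ this factor is not small, so the crude contraction loses too much. The remedy is to avoid contracting all the way down to a single edge at $x_0$. Instead, we contract every internal vertex except the parent $y$ of $x_0$ in $G$, using Proposition~\ref{prop: contract} repeatedly from the leaves inward. At the end of this process $y$ carries three edges: one to $x_0$, and two whose far ends are leaves $x_i,x_j$ with $i\ne j$ (these come from the two contracted subtrees, which have disjoint leaf sets). The resulting sum over $y$ is bounded by \eqref{eqn: triple-sum}:
\[
\sum_y\langle x_0-y\rangle^{2-d}\langle y-x_i\rangle^{2-d}\langle y-x_j\rangle^{2-d}.
\]
Since $|x_i-x_j|\ge\epsilon n$, at least one of $|x_0-x_i|,|x_0-x_j|$ is at least $\epsilon n/2$. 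The cyclic terms of \eqref{eqn: triple-sum} then produce $\langle x_0-x_i\rangle^{2-d}(\epsilon n)^{4-d}$ together with terms of the form $\min\{\cdot\}^2\langle\cdot\rangle^{2-d}\langle\cdot\rangle^{2-d}$. Using $d>6$, each of these is at most $C\epsilon^{-O(d)}n^{2-d}\,\mathrm{dist}(x_0,\cdot)^{4-d}$, with the $\epsilon$-dependence entering through $\langle x_0-x_j\rangle\ge\epsilon n/2$. The remaining $k-2$ contractions each contribute $n^{4-d}$ with a polynomial constant. Counting exponents gives $n^{(4-d)(k-2)}\cdot n^{2-d}\cdot\mathrm{dist}^{4-d}=n^{(4-d)(k-1)-2}\,\mathrm{dist}^{4-d}$, as required. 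If $\mathrm{dist}\gtrsim\epsilon n$, the claim already follows from \eqref{eqn:apriori}.

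The main obstacle is bookkeeping rather than any single estimate. Each contraction via Proposition~\ref{prop: contract} uses the $\epsilon n$ separation of the two child leaves, and one must check that every step introduces only a factor $C\epsilon^{-c d}$. Tracking this explicitly through \eqref{eqn: triple-sum} (the constants there are $\epsilon$-free) and through the ``$\min\ge\epsilon n/2$'' step gives at most $\epsilon^{-2d}$ per contraction. Over at most $k$ contractions, with the extra factor $\epsilon^{-d}$ coming from $|x_i|\le\epsilon^{-1}n$ in the far-from-origin cases, the total is $\epsilon^{-10dk}$, which is \eqref{eqn:poly-C}. The number of trees $G$ depends only on $k$ and so does not affect the $\epsilon$-dependence.
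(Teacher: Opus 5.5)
Your plan (tree-graph bound, contract every internal vertex except the neighbour $y$ of $x_0$, then estimate the three-arm sum at $y$) is viable and differs from the paper's. However, the one estimate that is supposed to produce the improvement is stated incorrectly. You say the cyclic terms of \eqref{eqn: triple-sum} produce $\langle x_0-x_i\rangle^{2-d}(\epsilon n)^{4-d}$, and that this is at most $C\epsilon^{-O(d)}n^{2-d}\,\mathrm{dist}(x_0,\cdot)^{4-d}$. Take $x_i$ to be the point nearest $x_0$, at distance $r\ll n$. Then that term exceeds $n^{2-d}r^{4-d}$ by the factor $(n/r)^2$, so the claim is false. This term is exactly \eqref{eqn: cherry-first} from the proof of Proposition~\ref{prop: contract}, where the minimum is discarded via $\min\{\langle x_i-x_0\rangle,\langle x_i-x_j\rangle\}^2\le\langle x_i-x_j\rangle^2$. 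Multiplied by $n^{(4-d)(k-2)}$, it gives back exactly the crude bound $\mathrm{dist}^{2-d}n^{(4-d)(k-1)}$ you set out to improve. So leaving $y$ uncontracted changes nothing by itself. The loss in Corollary~\ref{cor: tree-reduct} occurs precisely at the contraction of $y$, and the gain must come from estimating the $y$-sum differently.

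The repair is to keep the minimum.
\begin{itemize}
\item The term of \eqref{eqn: triple-sum} centred at $x_i$ is at most $\langle x_i-x_0\rangle^{4-d}\langle x_i-x_j\rangle^{2-d}\le(\epsilon n)^{2-d}\,\mathrm{dist}^{4-d}$, using $\min^2\le\langle x_i-x_0\rangle^2$.
\item The term centred at $x_j$ is bounded the same way.
\item The term centred at $x_0$ equals $\langle x_0-x_a\rangle^{4-d}\langle x_0-x_b\rangle^{2-d}$, where $x_a$ is the nearer of $x_i,x_j$ to $x_0$ and $\langle x_0-x_b\rangle\ge\epsilon n/2$.
\end{itemize}
Hence
\[\sum_y\langle x_0-y\rangle^{2-d}\langle y-x_i\rangle^{2-d}\langle y-x_j\rangle^{2-d}\le C\epsilon^{2-d}n^{2-d}\,\mathrm{dist}(x_0,\{x_1,\dots,x_k\})^{4-d},\]
and your exponent count then goes through. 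The constant is of order $\epsilon^{-(d-4)(k-2)-(d-2)}$, well within \eqref{eqn:poly-C}. This needs only $d>4$ and covers every position of $x_0$.

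Two smaller points:
\begin{itemize}
\item Your separate case $\mathrm{dist}\gtrsim\epsilon n$ is superfluous. It is also incorrect as argued: \eqref{eqn:apriori} gives only $n^{(4-d)k-2}$, which does not suffice when $x_0$ lies far outside the box.
\item Your argument requires $k\ge 2$. For $k=1$ the inequality is false when $|x_0-x_1|\ll n$, and holds only in the separated case that the paper takes as its base case.
\end{itemize}

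With this fix, your proof is a direct, non-inductive alternative to the paper's. The paper instead inducts on $k$: it splits the tree at the neighbour $z$ of the leaf $x_k$ and applies the inductive hypothesis to both subtrees. It then uses \eqref{eqn: std-conv} in the form $\sum_z\langle z-x_1\rangle^{4-d}\langle z-x_k\rangle^{2-d}\lesssim\mathrm{dist}^{6-d}$, which is where $d>6$ enters. Your version isolates the entire near-field gain in one sharp three-arm sum and makes the $\epsilon$-dependence explicit.
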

\begin{proof}
    We prove this by induction on $k$; the case $k = 1$ follows from usual two-point asymptotic \eqref{eqn: HHS}. In this case, we can take  $C(\epsilon)\le c\epsilon^{-d+2}$ since $|x_0-x_1|\ge \epsilon n$ by assumption.
    
    Assuming the lemma holds for $k-1$, we show it for $k$. We assume we have shown the second claim of the lemma holds for $\tau_{\ell}$ for $\ell < k$, and  
 we show it for $\tau_{k}$. We write using a weaker form of the tree-graph decomposition
\begin{align}
\label{eq:weaktree}
\tau_{k+1}(x_0, \dots, x_k) \leq \sum_{T \in \mathfrak{T}_k} \sum_{w_1, \dots, w_{k-1}} \prob( u_a \lra u_b \quad \forall \{a, b\} \in E(T))\ ,
\end{align}
where $w_1, \dots, w_{k-1}$ are the $\mathbb{Z}^d$-representatives of interior vertices of the tree, and $u_a$, $u_b$ are lattice sites labeled by the tree vertices $a$ and $b$. We fix a value of $T$ in the outer sum and show the claimed bound holds uniformly for the corresponding term of the inner sum.

Since $x_k$ is a leaf of $T$, it appears in exactly one edge $\{z, x_k\}$ of $T$. Removing $x_k$ from $T$, the vertex $z$ now has degree two. Splitting into the two connected sets on either side of $z$ produces a graph with two disjoint components $G_1$ and $G_2$ each containing a copy of $z$ as a leaf. Letting $A_i = \{x_j: j \neq k, \, x_j \in G_i\}$, we have $\max\{|A_1|, |A_2|\} \leq k-1$ and $|A_1| + |A_2|= k$. By relabeling if necessary, we assume that $x_1$ is the nearest element of $\{x_i\}_{i=1}^{k-1}$ to $x_k$ and that $x_1 \in A_1$. We write $E_i$ for the edges of $T$ internal to $A_i$ for $i = 1, 2$.

We use the BK inequality and the inductive hypothesis, as well as the first part of the theorem to upper bound the typical term of \eqref{eq:weaktree} by
\begin{align*}
&\sum_{w_1, \dots, w_{k-1}}  \tau(x_k, z) \tau(G_1) \tau(G_2)\\
\leq~& C\epsilon^{-d+2}C_{|A_1|}(\epsilon)C_{|A_2|}(\epsilon)n^{(4-d)(k-1) - 4} \sum_{z} \left[ \mathrm{dist}(z, A_1)^{4-d} + \mathrm{dist}(z, A_2)^{4-d} \right] \langle z-x_k\rangle^{2-d}\ ,
\end{align*}
where $C$ does not depend on $\epsilon$.
By our assumption about $x_1$, the dominant term is the one with the factor $\mathrm{dist}(z, A_1)^{4-d}$, so we upper bound by
\[C\epsilon^{-10d(|A_1|+|A_2|)-d+2} n^{(4-d)(k-1) - 4} \sum_{z} \mathrm{dist}(z, A_1)^{4-d} \langle z-x_k\rangle^{2-d} . \]

Since $d > 6$ and $|A_1|+|A_2|\le k-1$, the above is further bounded by
\[C\epsilon^{-10d(k-1)-d+2} n^{(4-d)(k-1) - 4} \sum_{z} \langle z-x_1\rangle^{4-d} \langle z-x_k\rangle^{2-d} \leq C n^{(4-d)(k-1) -4} \mathrm{dist}(x_k, \{x_i\}_{i=1}^{k-1})^{6-d} . \]
Finally, the last expression is at most
\[  C\epsilon^{-10dk} n^{(4-d)(k-1) -2} \mathrm{dist}(x_k, \{x_i\}_{i=1}^{k-1})^{4-d}.\]
\end{proof}

\begin{lemma}\label{lem:lcpiv}
Consider a set $\{x_0, \dots, x_k\} \subseteq G(\epsilon,n)$ and set $A = \{x_1, \dots, x_k\}$. There is a $C = C(k, \epsilon) > 0$ such that
\[\prob(\Gamma(x_0, \dots, x_k), \, \upiv(x_0, A) = \varnothing) \leq C n^{(4-d)k - 4}\ .  \]

\end{lemma}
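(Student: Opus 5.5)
We mimic the argument behind \eqref{eqn: piv-exists}. The idea is that when the connections from $x_0$ to the points of $A$ have no common open pivotal, the cluster must split at $x_0$ into two disjoint open trees. BK then factors the probability into two lower-order connectivity functions, each controlled by the Aizenman--Newman tree bound and Corollary~\ref{cor: tree-reduct}.

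\textbf{Combinatorial step.} Suppose $\Gamma(x_0,\dots,x_k)$ occurs and $\upiv(x_0,A)=\varnothing$. We claim there is a partition $A = A_1 \sqcup A_2$ with both parts nonempty such that
\[
\Gamma(\{x_0\}\cup A_1)\circ \Gamma(\{x_0\}\cup A_2)
\]
occurs. To see this, look at the open cluster $\clust(x_0)$ with $x_0$ as a base point. Since there is no edge pivotal for every connection $x_0 \lra x_i$, $i \le k$, there are two possibilities.

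1. Some pair $x_i, x_j$ has no common pivotal with $x_0$. Then Menger's theorem gives two edge-disjoint open paths, one from $x_0$ to $x_i$ and one from $x_0$ to $x_j$.
2. Otherwise, we group the targets by the first edge of the pivotal chain from $x_0$, that is, the edge adjacent to $x_0$, or take the block structure of $\clust(x_0)$ at $x_0$. If two targets lie in different branches at $x_0$, we get a disjoint decomposition.

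More concretely, consider the set of open edges incident to $x_0$ (or, more generally, the 2-edge-connected component of $x_0$) that are pivotal to some target. If every target went through a single pivotal $e$, then $e$ would be a common pivotal. So the targets split into at least two classes reachable through edge-disjoint open subgraphs meeting only at $x_0$ or at its 2-connected component.

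To avoid an unclean split, I would instead use the connectivity tree $T(\omega)$ of Section~\ref{sec: connect-tree}. The condition $\upiv(x_0,A)=\varnothing$ means $x_0$ has at least two children in $T$, or the targets have $m_{ij}=x_0$ for some pair. The children subtrees of $x_0$ then give $A_1,A_2$. Item 1 of Lemma~\ref{lem: Diagram}, together with the argument already used for ``$x_0$ not a leaf'' in the proof of Lemma~\ref{lem:binbran}, shows that the two subtrees can be spanned by edge-disjoint open trees. Taking unions of children, we may group the subtrees into just two classes.

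\textbf{Estimate.} Summing over the at most $2^k$ partitions and applying BK \eqref{eqn: BK} gives
\[
\prob(\cdots)\le \sum_{A_1\sqcup A_2=A}\tau_{|A_1|+1}(\{x_0\}\cup A_1)\,\tau_{|A_2|+1}(\{x_0\}\cup A_2).
\]
Since the points are in $G(\epsilon,n)$, each factor is bounded via the Aizenman--Newman tree-graph bound and \eqref{eqn:apriori} (or Lemma~\ref{lem:apriori}) by $C n^{(4-d)|A_i|-2}$. Multiplying the two factors gives
\[
C n^{(4-d)k-4},
\]
which is the claimed bound.

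\textbf{Main obstacle.} The main obstacle is the rigorous combinatorial step: showing that the absence of a common pivotal from $x_0$ to all of $A$ genuinely produces disjoint occurrence. The issue arises when pairwise common pivotals exist but are not shared globally, for example $x_1,x_2$ sharing a pivotal and $x_2,x_3$ sharing another. I would handle this by taking the first pivotal edge on each $\mathcal{P}(x_0,x_i)$ closest to $x_0$ and grouping targets by it. Targets with no pivotal, or in different groups, are then separated by Menger's theorem applied at the vertex $x_0$. If all targets shared the same first pivotal, that pivotal would be common, which is a contradiction.
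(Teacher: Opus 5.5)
Your proposal follows the paper's route. The paper proves Lemma~\ref{lem:lcpiv} in one line by appealing to the ``$x_0$ is not a leaf'' case of Lemma~\ref{lem:binbran}, which is exactly the disjoint occurrence $\Gamma(\{x_0\}\cup A_1)\circ\Gamma(\{x_0\}\cup A_2)$ followed by BK and the tree-graph bound \eqref{eqn:apriori}, giving $Cn^{(4-d)k-4}$. The combinatorial step is worth tightening (the paper is equally terse there), since an arbitrary two-way grouping of first-pivotal classes can fail when three or more classes leave the 2-edge-connected block of $x_0$: instead note that if all pairs $\piv(x_0,x_i)$, $\piv(x_0,x_j)$ intersected they would share their first element, so pick $i\neq j$ with disjoint pivotal sets, take edge-disjoint open paths $x_0\to x_i$, $x_0\to x_j$ by Menger, and attach each remaining target by an open path stopped at its first vertex in the current union, which keeps the two growing subgraphs edge-disjoint.
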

By Theorem~\ref{thm:kpt},  $\prob(\Gamma(x_0, \dots, x_k))$ is at least $c n^{(4-d) k - 2}$, so the above represents an error term.
 
\begin{proof}[Proof of Lemma~\ref{lem:lcpiv}]
This is a consequence of Lemma \ref{lem:binbran}, since $\upiv(x_0, A) = \varnothing$ implies the existence of (at least) two disjoint subtrees of the connectivity tree spanning $\{x_0,\ldots,x_k\}$ and rooted at $x_0$.
\end{proof}

\subsection{Proof of Theorem \ref{thm:treeinduct}}
        We induct on $k$. The case $k = 3$ involves (up to relabeling) only one tree $T$; we have treated this case above in Section~\ref{sec: 3pt-scaling}. We assume that the statement holds for all $T \in \mathfrak{T}_\ell$ for $\ell \leq k$ and prove it also holds in the case $\ell = k+1$. 
        
        We fix an arbitrary $T \in \mathfrak{T}_{k+1}$.
        As at Proposition \ref{prop: tree-reduct}, we let $I, J \neq 0$ refer to two indices of leaves in $T$ which are at graph distance $2$ from each other in $T$.
        Applying Lemma~\ref{lem:lcpiv} gives that it suffices to show the claimed asymptotic holds for
    \begin{equation}
        \label{eq:truncA}
        \prob( \Gamma(x_0, \dots, x_{k}), \, \mathcal{T}(x_0, \dots, x_k) = T, \, \piv(x_0, \{x_I, x_J\}) \neq \varnothing).
    \end{equation}
    We sum over the value $g$ of $\opiv(x_0, \{x_I, x_J\})$:
    \[\eqref{eq:truncA} = \sum_{g \in \edges} \prob( \Gamma(x_0, \dots, x_{k}), \, \mathcal{T}(x_0, \dots, x_k) = T, \, \opiv(x_0, \{x_I, x_J\}) = g). \]
    In an outcome of the event from the last display, the connectivity tree $\mathcal{T}$ associated to $(x_{\phi(0)}, \dots, x_{\phi(k)}, \underline{g})$ is  $\phi(T)$. We apply Lemma~\ref{lem:newcut} to express \eqref{eq:truncA} as
    \begin{equation}
    \begin{split}
        \label{eq:truncA2}
        &\beta \sum_{g \in \edges} \mathbb{P}\left(\begin{array}{c}\Gamma(x_{\phi(0)}, \dots, x_{\phi(k)}, \underline{g}), \mathcal{T}(x_{\phi(0)}, \dots, x_{\phi(k)}, \underline{g}) = \phi(T), \\
        x_I, x_J \lra \overline{g} \text{ off }\,C(\underline{g}),
        \mathcal{P}(\overline{g},\{x_I, x_J\})=\varnothing
        \end{array}\right)\\
        =~&\beta \sum_{g \in \edges} \mathbb{P}\left(\begin{array}{c}\Gamma(x_{\phi(0)}, \dots, x_{\phi(k)}, \underline{g}), \mathcal{T}(x_{\phi(0)}, \dots, x_{\phi(k)}, \underline{g}) = \phi(T), \\
        \{x_I \lra \overline{g} \text{ off }\,C(\underline{g})\} \circ \{x_J \lra \overline{g} \text{ off }\,C(\underline{g})\}
        \end{array}\right)\ .
        \end{split}
    \end{equation}

    The completion of the proof of Theorem~\ref{thm:treeinduct} proceeds by analysis of \eqref{eq:truncA2}. 
    We again introduce an auxiliary small parameter $\epsilon > 0$. 
    In Section~\ref{sec:knear}, we show that the contribution to the sum in \eqref{eq:truncA2} from $g$ near some $x_i$ --- that is, such that $(x_0, \dots, x_k, \underline{g}) \notin G(\epsilon^R,n)$ --- is negligible as $n \to \infty$. In Section~\ref{sec:kfar}, we control the remaining terms of \eqref{eq:truncA2} for $n$ large, and we combine the two estimates to establish Theorem~\ref{thm:treeinduct}.

\subsubsection{Near-regime \label{sec:knear}}
We bound the contribution to \eqref{eq:truncA2} from edges $g$ such that $(\underline{g}, x_{\phi(0)}, \dots, x_{\phi(k)}) \notin G(\epsilon^R,n)$ for $R\ge 1$ to be determined. That is, we control the following partial sum of \eqref{eq:truncA2}:
\begin{equation}\label{eq:truncA20}
     \sum_{\substack{g \in \edges: \\ |\underline{g} -x_i| < \epsilon^R n \text{ for some $i$}  }} \mathbb{P}\left(\begin{array}{c}\Gamma(x_{\phi(0)}, \dots, x_{\phi(k)}, \underline{g}), \mathcal{T}(x_{\phi(0)}, \dots, x_{\phi(k)}, \underline{g}) = \phi(T), \\
        \{x_I \lra \overline{g} \text{ off }\,\clust(\underline{g})\} \circ \{x_J \lra \overline{g} \text{ off }\,\clust(\underline{g})\}
        \end{array}\right),
\end{equation}
which in turn is bounded by
\begin{align}
        \label{eq:truncA3}
 & \sum_{\substack{g \in \edges: \\ |\underline{g} -x_i| < \epsilon^R n \text{ for some $i$}  }} \tau_k(x_{\phi(0)}, \dots, x_{\phi(k)}, \underline{g}) \tau(x_I, \overline{g}) \tau(x_J, \overline{g})\\
 \le &~ \sum_{j=0}^k \sum_{\substack{g \in \edges: \\ |g- x_j| < \epsilon^R n}} \tau_k(x_{\phi(0)}, \dots, x_{\phi(k)}, \underline{g}) \tau(x_I, \overline{g}) \tau(x_J, \overline{g}). \label{eq:Anear}
\end{align}
For the term $j = I$ of \eqref{eq:Anear}, note that when $|g - x_I| \leq \epsilon^R n/2$  we have 
\[|g - x_i| > \epsilon n-\epsilon^Rn\ge \epsilon n/2\]
for all $i \neq I$.  We can therefore bound the factors of \eqref{eq:Anear} other than $\tau(x_I, \overline{g})$ using \eqref{eqn: HHS} and \eqref{eqn:apriori}, yielding
    \begin{align}
        & \sum_{\substack{g \in \edges: \\ \mathrm{dist}(g, x_I) < \epsilon^R n}} \tau_k(x_{\phi(0)}, \dots, x_{\phi(k)}, \underline{g}) \tau(x_I, \overline{g}) \tau(x_J, \overline{g}) \nonumber\\
        \leq~&C(\epsilon) n^{(4-d)(k-1) - 2 + 2 - d}\sum_{\substack{g \in \edges: \\ \mathrm{dist}(g, x_I) < \epsilon^R n}} \langle x_I - \overline{g}\rangle^{2-d}\nonumber\\
        \leq~&C(\epsilon) \epsilon^{2R} n^{(4-d) k - 2}\ .\label{eq:sumI}
    \end{align}
    where the constant $C(\epsilon)$ depends on $\epsilon$ but not on $R$.
    A bound identical to \eqref{eq:sumI} holds for the term $j = J$ of \eqref{eq:Anear}.

    We now bound the remaining terms of \eqref{eq:Anear}, using Lemma~\ref{lem:apriori}:
    \begin{align*}
        & \sum_{\substack{g \in \edges: \\ \mathrm{dist}(g, x_j) < \epsilon^R n}} \tau_k(x_{\phi(0)}, \dots, x_{\phi(k)}, \underline{g}) \tau(x_I, \overline{g}) \tau(x_J, \overline{g})\\
        &\leq~C(\epsilon) n^{4-2d} n^{(4-d)(k-2)} \sum_{\substack{g \in \edges: \\ \mathrm{dist}(g, x_j) < \epsilon^R n}} \langle \overline{g} - x_j\rangle^{4-d}\\
        &\leq C(\epsilon) \epsilon^{2R} n^{(4-d) k - 2}\ .
    \end{align*}
    Here we note that the constant in Lemma \ref{lem:apriori} has polynomial order dependence on $\epsilon^{-1}$ which does not depend on $R$, so choosing $R$ sufficiently large, the last quantity can be bounded by $C\epsilon$ for some $C$ independent of $C$.
    Pulling the last display together with \eqref{eq:sumI} (and the analogous $J$ term) gives that 
    \begin{equation}
        \label{eq:lastsub}
        \eqref{eq:truncA20} \leq C \epsilon n^{(4-d) k - 2}\ .
    \end{equation} 
    This is an error term compared to the scale in Theorem~\ref{thm:kpt}.

\subsubsection{Far-regime \label{sec:kfar}}
To complete the proof of Theorem~\ref{thm:treeinduct}, we analyze the terms of \eqref{eq:truncA2} corresponding to $(\overline g, x_0, \dots, x_k) \in G(\epsilon^R,n)$. The partial sum of these terms is:
\begin{equation}
    \label{eq:truncAfar}
     \beta \sum_{\substack{g \in \edges: \\ (x_0, \dots, x_k, \underline{g}) \in G(\epsilon^R,M)}} \mathbb{P}\left(\begin{array}{c}\Gamma(x_{\phi(0)}, \dots, x_{\phi(k)}, \underline{g}), \mathcal{T}(x_{\phi(0)}, \dots, x_{\phi(k)}, \underline{g}) = \phi(T), \\
        \{x_I \lra \overline{g} \text{ off }\,\clust(\underline{g})\} \circ \{x_J \lra \overline{g} \text{ off }\,\clust(\underline{g})\}
        \end{array}\right)\ .
\end{equation} 
The result \eqref{eq:lastsub} shows that
\begin{equation}
    \label{eq:truncAeps}
    \lim_{\epsilon \to 0} \limsup_{n \to \infty} \frac{\eqref{eq:truncAfar}}{\eqref{eq:truncA2}} = 1\ .
\end{equation}

We recall the expression appearing in Theorem~\ref{thm:treeinduct}:
\begin{equation}\label{eq:truncAfar2}
    2d\beta \alpha^2 \rho \sum_{v \in \Z^d} \tau_{\phi(T)}(x_{\phi(0)}, \dots, x_{\phi(k)}, v) \langle x_{J} - v\rangle^{2-d} \langle x_{I} - v\rangle^{2-d}\ .
\end{equation}
By \eqref{eq:truncAeps}, to complete the proof of the theorem, it suffices to show
\begin{equation}
    \label{eq:truncAeps2}
        \lim_{\epsilon \to 0} \limsup_{n \to \infty} \frac{\eqref{eq:truncAfar}}{\eqref{eq:truncAfar2}} = 1\ .
\end{equation}

We turn to the sum \eqref{eq:truncAfar}; as in the case of the three-point function, the portions of the event depending on the clusters of $\underline{g}$ and $\overline{g}$ can be decoupled. For this, we introduce a new approximating event.
Let $\mathcal{D} \subseteq \Z^d$ be a fixed finite set. For arbitrary $x_0, x_1, \dots, x_k$ such that $(x_0, x_1, \dots, x_k) \in F(\epsilon,n)$ and $T \in \mathfrak{T}_{k+1}$, and for each $0 \leq i \leq k$, set
\begin{equation}
    \label{eq:GTdef}
    \Gamma_{T, i}(x_0, \dots, x_k; \mathcal{D}) = \bigcap_{j=1}^k\left\{x_0 \sa{\Z^d \setminus [x_i + \mathcal{D}]} x_j  \right\} \cap \{\mathcal{T}_{k+1} = T \}\ . 
\end{equation}

We also introduce a new auxiliary parameter $K$, playing a virtually identical role as it did in \eqref{eq:threept3k}. By considering a $K$-dependent approximation using the events \eqref{eq:GTdef}, we show
\begin{equation}
    \label{eq:truncAeps3}
        \lim_{\epsilon \to 0} \lim_{K \to \infty} \limsup_{n \to \infty} \frac{\eqref{eq:truncAfar}}{\eqref{eq:truncAfar2}} = 1\ ,
\end{equation}
which establishes \eqref{eq:truncAeps2} and completes the proof of Theorem~\ref{thm:treeinduct}.

As in the proof of Proposition~\ref{prop:threept}, we introduce a new independent copy of our probability space and an associated independently distributed outcome $\wto{\omega}$, writing $\wto{\prob}$ for probabilities with respect to this independent percolation process.   Lemma~\ref{lem: GT-truncation} shows that for each fixed $\epsilon > 0$, uniformly in $n$ and in $(x_0, \dots, x_k) \in G(\epsilon,n)$, if we consider the expression
\begin{equation}
    \label{eq:trunchelp}
    \beta \sum_{\substack{g \in \edges: \\ (x_0, \dots, x_k, \underline{g}) \in G(\epsilon^R, n)}}  \widetilde{\mathbb{E}}\left[\mathbbm{1}_{\{x_I \lra \overline{g} \} \circ \{x_J \lra \overline{g}\}} \mathbb{P}(\Gamma_{\phi(T), k}(x_{\phi(0)}, \dots, x_{\phi(k)}, \underline{g}; \clust_{B(\overline{g}; 2K)}(\overline{g})))\right],
\end{equation}
we have
\[\left|\eqref{eq:truncAfar} - \eqref{eq:trunchelp} \right| \leq C n^{(4-d)k - 2} K^{(6-d)/d}\ ,\]
and so, using \eqref{eqn: inductive-num-lwr-bd}:
\[\lim_{\epsilon \to 0}\lim_{K \to \infty}\limsup_{n \to \infty}\left|\frac{\eqref{eq:trunchelp}-\eqref{eq:truncAfar}}{\eqref{eq:truncAfar2}}\right| = 0\ . \]

We apply the enhanced IIC result, Lemma~\ref{lem:iicnew2}, to control \eqref{eq:trunchelp} as $n \to \infty$. Similar to the estimates below \eqref{eqn: apply-IIC}, if we define the expression
\begin{equation}
    \label{eq:iicmorehelp}
   \beta \sum_{\substack{g \in \edges: \\ (x_0, \dots, x_k, \underline{g}) \in G(\epsilon^R,n)}}  \tau_{\phi(T)}(x_{\phi(0)}, \dots, x_{\phi(k)}, \underline{g})  \wto{\mathbb{E}}\left[\mathbf{1}_{\{x_I \lra \overline{g} \} \circ \{x_J \lra \overline{g}\}} \nu_{\underline{g}}(\Xi_{\underline{g}}(\clust_{B(\overline{g}; 2K)}(\overline{g}))) \right]
\end{equation}
we have
\begin{equation}
    \label{eq:iicmorehelp2}
   \lim_{n \to \infty}  \sup_{(x_0, \dots, x_k) \in G(\epsilon,n)}\left|\frac{\eqref{eq:trunchelp}}{\eqref{eq:iicmorehelp}} - 1 \right| = 0\quad \text{for each fixed $K \geq 1$, $\epsilon > 0$.}
\end{equation}

Applying Lemma~\ref{lem:genlem1} and recalling that each vertex is the endpoint of $2d$ edges, we see
\[  \lim_{\epsilon \to 0} \lim_{K \to \infty} \lim_{n \to \infty} \frac{\eqref{eq:iicmorehelp} - 2d\alpha^2 \beta \rho \sum_{v \in \Z^d} \tau_{\phi(T)}(x_{\phi(0)}, \dots, x_{\phi(k)}, v) \langle x_{J} - v\rangle^{2-d} \langle x_{I} - v\rangle^{2-d}}{2d\alpha^2 \beta \rho \sum_{v \in \Z^d} \tau_{\phi(T)}(x_{\phi(0)}, \dots, x_{\phi(k)}, v) \langle x_{J} - v\rangle^{2-d} \langle x_{I} - v\rangle^{2-d}} = 0\ .\]
This completes the proof of Theorem~\ref{thm:treeinduct}.\qed

\subsection{Proof of Theorem~\ref{thm:kpt}}
In this section, we prove Theorem~\ref{thm:kpt} using Theorem~\ref{thm:treeinduct}.

We prove the result inductively in $k$. The induction involves a decomposition over trees of $\mathfrak{T}_{k+1}$. 

The statement on which we perform the induction is
\begin{equation}
    \label{eq:howtoind}
    \begin{gathered}
    \text{for all $\ell < k$, all $T \in \mathfrak{T}_{\ell+1},$ and all distinct $y_0, \dots, y_\ell \in \mathbb{R}^d$,}\\
    \text{ setting $x_i^{(n)} = \lfloor n y_i \rfloor$ for $0 \leq i \leq \ell$,}\\
    n^{-(4-d)(\ell-1)-2}\tau_{T}(x_0^{(n)},\ldots, x_{\ell-1}^{(n)})\rightarrow \alpha^{2\ell-3}(2d\beta \rho)^{\ell-2}\mathcal{I}_{T}(y_0,\ldots, y_{\ell-1})\ .
    \end{gathered}
\end{equation}
By Lemma~\ref{lem:binbran}, with each $y_i$ and $x_i^{(n)}$ as above, we have
\[\lim_{n \to \infty} \frac{\tau_{\ell+1}(x_0^{(n)}, \dots, x_\ell^{(n))})}{\sum_{T \in \mathfrak{T}_{\ell+1}} \tau_T(x_0^{(n)}, \dots, x_\ell^{(n)}) } = 1\ , \]
and so establishing \eqref{eq:howtoind} for all $k\geq 2$ will complete the proof of Theorem~\ref{thm:kpt}.

Since there is only one tree $T \in \mathfrak{T}_{3}$, and since we proved the special case of Theorem~\ref{thm:kpt} for $k = 2$ as Proposition~\ref{prop:threept}, we just need to prove the inductive step. We assume the case $k-1$ of \eqref{eq:howtoind} holds and then establish it for the case $k$.

In turn, applying Theorem~\ref{thm:treeinduct}, instead of showing \eqref{eq:howtoind} in case $k$, we need only show
\begin{equation} \label{eq:peel} \begin{split}&2d\alpha^2 \rho n^{-(4-d)k-2} \sum_{v \in \Z^d} \tau_{\phi(T)}(x_{\phi(0)}, \dots, x_{\phi(k)}, v) \langle x_{J} - v\rangle^{2-d} \langle x_{I} - v\rangle^{2-d}\\
\to~&\alpha^{2k-3}(2d\beta \rho)^{k-2}\mathcal{I}_{T}(y_0,\ldots, y_k)\quad \text{as $n \to \infty$} . \end{split} \end{equation}
We rewrite the normalized sum on the left-hand side of \eqref{eq:peel} as
\begin{equation}
    \label{eq:peel2}
     \int_{\mathbb{R}^d} H^{(n)}(z) \, \mathrm{d}z = \int_{\mathbb{R}^d} h_1(z) h_2(z) \, \mathrm{d}z\ ,
\end{equation}
where we make the change of variables $z =  v/n$ and where
\[h_1^{(n)}(z) = n^{-(4-d)(k-1)-2}\tau_{\phi(T)}(x_{\phi(0)}, \dots, x_{\phi(k)}, \lfloor nz\rfloor) \]
and
\[h_2^{(n)}(z) = n^{(2d-4)}  \langle x_{J} - nz\rangle^{2-d} \langle x_{I} - \lfloor nz \rfloor\rangle^{2-d}\ . \]

The inductive hypothesis gives the pointwise convergence
\[h_1^{(n)}(z) \rightarrow \alpha^{2(k-1)-3}(2d\beta \rho)^{(k-1)-2}\mathcal{I}_{\phi(T)}(y_{\phi(0)}, \dots, y_{\phi(k)}, z)\ . \]
Similarly,
\[h_2^{(n)}(z) \to |y_J-z|^{2-d} |y_I - z|^{2-d}\ . \]
The a priori bounds of Lemma~\ref{lem:apriori} give the existence of a constant $C> 0$, dependent on the $y_j$s but not on $z$, such that
\[H^{(n)}(z) \leq C \sum_{i \neq I, J} |y_i - z|^{4-d} |y_I - z|^{2-d} |y_J - z|^{2-d}\ .  \]
Since $d > 6$, the upper bound of the last display is integrable. We may therefore apply the dominated convergence theorem 
with the representation \eqref{eq:peel2} to see that the left-hand side of \eqref{eq:peel} converges as $n \to \infty$ to
\[ \alpha^{2k-3}(2d\beta \rho)^{k-2} \int_{\mathbb{R}^d} \mathcal{I}_{\phi(T)}(y_{\phi(0)}, \dots, y_{\phi(k)}, z) |y_I - z|^{2-d} |y_J - z|^{2-d} \, \mathrm{d}z\ . \]

The edges $\{y_I, z\}$ and $\{y_J, z\}$ are exactly what is removed from $T$ to produce $\phi(T).$ We thus see that the right-hand side of the last display is identical to the right-hand side of \eqref{eq:peel}, completing the proof.

\section{Auxiliary Lemmas \label{sec:aux}}
\subsection{Switching}
The following lemma allows us to control the probability of closing a single pivotal edge in an open cluster. We choose this edge to separate the vertices $x_I, x_J$ from Proposition~\ref{prop: tree-reduct} from the rest of the cluster. In so doing, we reduce the number of leaf vertices in the resultant connectivity tree, allowing us to argue via induction on the size of the tree.
\begin{lemma}
\label{lem:newcut}
 Fix $k \geq 2$; let $T \in \mathfrak{T}_{k+1}$, and let $x_0, \dots, x_k \in \mathbb{Z}^d$ be distinct. Suppose $x_{k-1}$ and $x_k$ are the vertices $x_I$, $x_J$ as defined in Proposition \ref{prop: tree-reduct} and set 
\[D_T(x_0, \dots, x_k, g) = \Gamma(x_0, \dots, x_k) \cap \{\mathcal{T}(x_0, \dots, x_k) = T\} \cap \{g = \opiv(x_0, \{x_{k-1}, x_k\}) \}\ . \]
Then the following identity holds:
\begin{align*}
\prob(D_T(x_0, \dots, x_k, g)) = \beta \prob\left(\begin{array}{c}
\Gamma(x_0, \dots, x_{k-2}, \underline{g}) \cap \{\mathcal{T}(x_0, \dots, x_{k-2}, \underline{g}) = \phi(T)\}\\
\cap \{x_{k-1}, x_k \lra \overline{g} \text{ off } \clust(\underline{g})\} \cap \{\piv(\overline{g}, \{x_{k-1}, x_k\}) = \varnothing \} 
\end{array} \right)\ .
\end{align*}
\end{lemma}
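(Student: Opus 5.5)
The plan is a standard switching (closing-a-pivotal) argument based on independence of the state of $g$ from everything else. Write $\omega^g$ and $\omega_g$ for the configurations obtained from $\omega$ by opening, respectively closing, the edge $g$ (as an undirected edge, i.e.\ both orientations). Since $\omega(g)$ is independent of $\omega$ off $g$, for any event $A$ not depending on the state of $g$ we have $\prob(A\cap\{g \text{ open}\}) = p_c\,\prob(A)$ and $\prob(A\cap\{g\text{ closed}\})=(1-p_c)\prob(A)$, hence $\prob(A,\, g\text{ open}) = \beta\,\prob(A,\, g \text{ closed})$. So it suffices to exhibit an event $A$, measurable with respect to the edges other than $g$, such that
\[
D_T(x_0,\dots,x_k,g) = A\cap\{g\text{ open}\}, \qquad A\cap\{g\text{ closed}\} = \text{(the event on the right-hand side of the lemma)}.
\]
The natural choice is $A=\{\omega_g \in R\}$, where $R$ denotes the right-hand event; this is automatically independent of the state of $g$, and its intersection with $\{g \text{ closed}\}$ is exactly $R$. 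So the entire content is the deterministic claim: $\omega\in D_T(g)$ if and only if $g$ is open in $\omega$ and $\omega_g\in R$.

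For the forward direction, take $\omega\in D_T(g)$. Since $g=\opiv(x_0,\{x_{k-1},x_k\})$ is pivotal, closing it splits $\clust(x_0)$ into $\clust_{\omega_g}(\underline g)\ni x_0$ and $\clust_{\omega_g}(\overline g)\ni x_{k-1},x_k$. I would check: (i) all $x_j$, $j\le k-2$, lie on the $\underline g$ side; this uses that $T$ has $x_{k-1},x_k$ as the only leaves below the parent vertex $v$, so $g$ is not pivotal for any $x_0\lra x_j$, $j\le k-2$ (if it were, $x_j$ would be a descendant of the branching point of $x_{k-1},x_k$ in $T$, contradicting the structure guaranteed by Proposition~\ref{prop: tree-reduct}). (ii) $\piv_{\omega_g}(\overline g,\{x_{k-1},x_k\})=\varnothing$, since a common pivotal there would be a common pivotal for $x_0\lra x_{k-1},x_k$ in $\omega$ further than $g$, contradicting maximality of $g$. (iii) The connectivity tree of $(x_0,\dots,x_{k-2},\underline g)$ in $\omega_g$ equals $\phi(T)$: pivotals for $x_j\lra x_0$ with $x_j$ on the $\underline g$ side are unchanged by closing $g$ (every path avoiding $g$ stays in $\clust_{\omega_g}(\underline g)$, and paths through $g$ must return across $g$, so can be shortcut), while pivotals for $\underline g \lra x_0$ in $\omega_g$ are exactly the pivotals of $x_{k-1}\lra x_0$ in $\omega$ lying before $g$ (between $\underline g$ and $x_0$). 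Hence the sets $\mathcal B_i$ and minima $m_{ij}$ transfer, with $\underline g$ playing the role of the merged vertex $v$, which is exactly the relabelling $\phi$.

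For the converse, given $g$ open and $\omega_g\in R$, opening $g$ connects $\clust_{\omega_g}(\overline g)$ to $\clust_{\omega_g}(\underline g)$ through the single edge $g$, since the clusters are disjoint ("off $\clust(\underline g)$"); thus $g$ is pivotal for $x_0\lra x_{k-1},x_k$, and absence of common pivotals from $\overline g$ shows $g$ is the last one. Running (iii) backwards recovers $\mathcal T(x_0,\dots,x_k)=T$.

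The main obstacle is step (iii), the bookkeeping showing the connectivity tree transforms exactly by $\phi$ in both directions: one must verify carefully that closing/opening $g$ neither creates nor destroys pivotals on the $x_0$ side, and that the in-degree-two structure at $\underline g$ corresponds to the vertex $v$ of $T$. I would first prove the auxiliary statement that $\piv_{\omega}(x_j,x_0)=\piv_{\omega_g}(x_j,x_0)$ for $j\le k-2$ and $\piv_\omega(x_i,x_0)=\piv_{\omega_g}(\overline g,x_i)\cup\{g\}\cup\piv_{\omega_g}(\underline g,x_0)$ for $i\in\{k-1,k\}$, from which the tree identification follows by unwinding the definitions of $m_{ij}$ and $p(\cdot)$.
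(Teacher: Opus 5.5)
Your proposal is essentially the paper's proof: you close the open pivotal $g$ to gain the factor $\beta=p_c/(1-p_c)$, which is equivalent to the paper's bijection $\psi$ via your independence formulation with $A=\{\omega_g\in R\}$, and you establish the event identity by double inclusion, with your (i)--(iii) and the pivotal decomposition making explicit what the paper only sketches. One caveat applies equally to the paper. Tree vertices are the leaf-side endpoints of pivotals, so the merge vertex of $x_{k-1},x_k$ is $\overline{g}$, not $\underline{g}$; if the edge after $g$ toward $x_0$ is also pivotal for some $x_j$ whose connections enter at $\underline{g}$, then $\underline{g}=m_{j,k-1}$, and after closing $g$ the labelled point $\underline{g}$ is no longer a leaf, so the forward inclusion in (iii) fails on this event and the identity is exact only up to it (I expect this event to be of lower order, but neither you nor the paper bounds it).
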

\begin{proof}

We note that, for each outcome in the event $D_T(x_0, \dots, x_k, g)$, the edge $g$ is open. Consider the mapping $\psi: \, \{0, 1\}^{\edges} \to \{0, 1\}^{\edges}$, where $\psi(\omega)$ is obtained from $\omega$ by closing $g$ (if open in $\omega$) and leaving the status of all other edges unchanged. It is immediate that,  $\psi: D_T(x_0, \dots, x_k, g) \to \psi(D_T(x_0, \dots, x_k, g))$ is a bijection, and that 
\[\prob(D_T(x_0, \dots, x_k, g)) = \beta \prob(\psi(D_T(x_0, \dots, x_k, g)))\ .\]
In fact, $\psi$ is a bijection from the event $\{g \text{ is open}\}$ onto its image $\{g \text{ is closed}\}$, and is hence invertible when restricted to these sets.
The claim will then follow as soon as we show
\begin{equation}
    \label{eq:psiD}
    \psi(D_T(x_0, \dots, x_k, g)) = 
    \begin{array}{c}
\Gamma(x_0, \dots, x_{k-2}, \underline{g}) \cap \{\mathcal{T}(x_0, \dots, x_{k-2}, \underline{g}) = \phi(T)\}\\
\cap \{x_{k-1}, x_k \lra \overline{g} \text{ off } \clust(\underline{g})\} \cap \{\piv(\overline{g}, \{x_{k-1}, x_k\}) = \varnothing 
\end{array}
\ .
\end{equation}

We show this by showing that the right-hand side of \eqref{eq:psiD} is contained in the left-hand side, and vice-versa. Let us start by considering an outcome $\omega$ in the right-hand side of \eqref{eq:psiD}, noting that $g$ must be closed in $\omega$ so that $\overline{g}$ and $\underline{g}$ lie in different open clusters. We show that $\psi^{-1}(\omega) \in D_T(x_0, \dots, x_k, g)$, which suffices to show the claimed containment. To do this, we show that $\psi^{-1}(\omega)$ lies in each of the three events whose intersection defines $D_T(x_0, \dots, x_k, g)$.

In $\omega,$ there are open connections from $\underline{g}$ to each $x_i$ for $i < k-1$, and there are open connections from $\overline{g}$ to $x_{k-1}$ and $x_k$. Since $g$, and all edges in open connections from the previous sentence, are open in $\psi^{-1}(\omega)$, it follows that $\psi^{-1}(\omega) \in \Gamma(x_0, \dots, x_k)$. Since $x_{k-1}$ and $x_k$ are not connected to $x_0$ in $\omega$, it follows that $g \in \piv(x_0, \{x_{k-1}, x_k\})$ in $\psi^{-1}(\omega)$.   In $\psi^{-1}(\omega)$, all open paths from $x_0$ to $x_{k-1}$ or $x_k$ traverse $g$ from $\underline{g}$ to $\overline{g}$. If $\psi^{-1}(\omega)$ were not an element of $\{g = \opiv(x_0, \{x_{k-1}, x_k\}) \}$, there would be another pivotal $f$ in $\piv(x_0, \{x_{k-1}, x_k\})$ appearing after $g$ in open paths from $x_0$. Then $f \in \piv(\overline{g}, \{x_{k-1}, x_k\})$ in $\psi^{-1}(\omega)$, and hence in $\omega$. But this would contradict the fact that $\omega \in \{\piv(\overline{g}, \{x_{k-1}, x_k\}) = \varnothing\}$, and so we see that in fact $\psi^{-1}(\omega) \in \{g = \opiv(x_0, \{x_{k-1}, x_k\}) \}$.

The fact that $\psi^{-1}(\omega) \in \{\mathcal{T}(x_0, \dots, x_k) = T\}$ follows easily from the previous observations and the fact that $x_{k-1}, x_k$ are $x_I$ and $x_J$ from below Proposition~\ref{prop: tree-reduct}. Indeed, this fact implies that 
\[\piv(x_0, x_\ell) \cap \piv(x_0, x_{k-1}) = \piv(x_0, \underline{g}) \cap \piv(x_0, x_{k-1})\ ,\]
with a similar statement holding when $x_{k-1}$ is replaced by $x_k$. Since $\omega \in \{\mathcal{T}(x_0, \dots, x_{k-2}, \underline{g}) = \phi(T)\}$, this ensures that $\mathcal{T}(x_0, \dots, x_{k})$ in $\omega$ is produced from $\phi(T)$ by adjoining two children, namely $x_{k-1}$ and $x_k$, to $\underline{g}$. This completes the proof that the right-hand side of \eqref{eq:psiD} is contained in the left-hand side.

The proof of the fact that the left-hand side of \eqref{eq:psiD} follows from similar considerations, so we sketch it. If $\omega \in D_T(x_0, \dots, x_k, g)$, then by the fact that $x_{k-1}$ and $x_k$ are adjacent leaves in $T$, we see $\psi(\omega) \in \Gamma(x_0, \dots, x_{k-2}, \underline{g})$, and as in the previous paragraph, we see that in $\psi(\omega)$, we have $\mathcal{T}(x_0, \dots, x_{k-2}, \underline{g}) = \phi(T)$. The fact that $\psi(\omega)$ is an element of the remaining two events from the right-hand side of \eqref{eq:psiD} follow from the fact that in $\omega$, the edge $g$ is the extremal pivotal for $\{x_0 \lra x_{k-1}\}$ and $\{x_0 \lra x_k\}$. This proves \eqref{eq:psiD} and hence the lemma.
\end{proof}

\subsection{Modified truncation lemma}
We recall the definition \eqref{eq:GTdef} here: for a fixed vertex set $\mathcal{D}$, we set
\begin{equation*}
    \Gamma_{T,k}(x_0, \dots, x_k; \mathcal{D}) = \bigcap_{j=1}^k\left\{x_0 \sa{\Z^d \setminus \mathcal{D}} x_j  \right\} \cap \{\mathcal{T}_{k+1} = T \}\ . 
\end{equation*}
\begin{lemma}\label{lem: GT-truncation}
    Let $R>0$. Suppose that $(x_0, \dots, x_k, \underline{g}) \in G(\epsilon^R, n)$. Let $x_I, x_J$ be chosen as at Proposition \ref{prop: tree-reduct}. We have, uniformly in $n$ and in $T \in \mathfrak{T}_{k}$, that
 \begin{align}
        &\mathbb{P}\left(\begin{array}{c}\Gamma(x_{\phi(0)}, \dots, x_{\phi(k)}, \underline{g}), \mathcal{T}(x_{\phi(0)}, \dots, x_{\phi(k)}, \underline{g}) = \phi(T), \\
        \{x_I \lra \overline{g} \text{ off }\,\clust(\underline{g})\} \circ \{x_J \lra \overline{g} \text{ off }\,\clust(\underline{g})\}
        \end{array}\right)\label{eq:ktrunc1}\\
        \geq~& \widetilde{\mathbb{E}}\left[\mathbbm{1}_{\{x_I \lra \overline{g} \} \circ \{x_J \lra \overline{g}\}} \mathbb{P}(\Gamma_{\phi(T), k}(x_{\phi(0)}, \dots, x_{\phi(k)}, \underline{g}; \clust_{B(\overline{g}; 2K)}(\overline{g})))\right] - C(\epsilon,R) n^{(4-d)k - 2 - d} K^{(6-d)/d}\ .\label{eq:ktrunc2}
    \end{align}
\end{lemma}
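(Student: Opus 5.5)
We first rewrite the left-hand side \eqref{eq:ktrunc1} by conditioning on the cluster of $\overline{g}$. On the event in \eqref{eq:ktrunc1}, $g$ is closed and $\clust(\overline{g})$ is disjoint from $\clust(\underline{g})$. Partition according to the value $\mathcal{C}$ of $\clust(\overline{g})$. The event $\{\clust(\overline{g}) = \mathcal{C}\}$ depends only on edges with at least one endpoint in $\mathcal{C}$. The event that the tree of $\underline{g}$ is $\phi(T)$ with all connections avoiding $\mathcal{C}$ depends only on edges with both endpoints off $\mathcal{C}$. These edge sets are disjoint, so the two events are independent. Introducing an independent copy $\wto{\omega}$ used to sample $\wto{\clust}(\overline{g})$, this gives the exact identity
\[\eqref{eq:ktrunc1} = \wto{\mathbb{E}}\Big[\mathbbm{1}_{\{x_I \lra \overline{g}\}\circ\{x_J\lra \overline{g}\}}\,\mathbb{P}\big(\Gamma_{\phi(T),k}(x_{\phi(0)},\dots,x_{\phi(k)},\underline{g};\wto{\clust}(\overline{g}))\big)\Big].\]
(One should check that the disjoint-occurrence event for $x_I,x_J$ matches the requirement $\piv(\overline{g},\{x_I,x_J\})=\varnothing$ in Lemma~\ref{lem:newcut}; this is already done in \eqref{eq:truncA2}.) The only difference from \eqref{eq:ktrunc2} is that the full cluster $\wto{\clust}(\overline{g})$ is replaced by the smaller restricted cluster $\wto{\clust}_{B(\overline{g};2K)}(\overline{g})$. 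Avoiding a smaller set is easier, so the probability in \eqref{eq:ktrunc2} is at least that in \eqref{eq:ktrunc1}, and we need a bound on the difference.

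The difference is bounded by the expectation of
\[\mathbb{P}\big(\Gamma_{\phi(T),k}(\cdots;\wto{\clust}_{B(\overline{g};2K)}(\overline{g}))\setminus\Gamma_{\phi(T),k}(\cdots;\wto{\clust}(\overline{g}))\big),\]
restricted to $\{x_I\lra\overline{g}\}\circ\{x_J\lra\overline{g}\}$. We will also need to handle the tree-shape constraint $\{\mathcal{T}=\phi(T)\}$, which is not monotone. The cleanest route is to bound the difference by the probability that some required connection of the tree touches $\wto{\clust}(\overline{g})$ outside $B(\overline{g};2K)$ while avoiding the inner part. The changes to the tree shape are controlled the same way, since these are the only configurations where the two indicator functions differ. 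In the difference event, the cluster of $\underline{g}$, which realizes $\Gamma(x_{\phi(0)},\dots,\underline{g})$, must hit a vertex $z\in\wto{\clust}(\overline{g})$ with $|z-\overline{g}|\ge 2K$.

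We then bound the difference diagrammatically, using BK for each of $\prob$ and $\wto{\prob}$ separately. There are two cases, according to where $z$ sits relative to the two clusters.

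In the $\wto{\omega}$-cluster, either $z$ lies off the geodesic structure of the tree connecting $\overline{g}$ to $x_I,x_J$, or it branches off at some vertex $w$. This gives a factor
\[\sum_w \tau(\overline{g},w)\tau(w,x_I)\tau(w',x_J)\tau(w,z)\]
with the usual branching arrangements.

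In the $\omega$-cluster, the tree spanning $x_{\phi(0)},\dots,\underline{g}$ passes through $z$ via a branch point $u$, giving $\tau(u,z)$ times the tree diagram.

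Summing over $z$ with $|z-\overline{g}|\ge 2K$ produces convolutions such as
\[\sum_{|z-\overline{g}|\ge 2K}\tau(u,z)\tau(z,w),\]
which sit near $\overline{g}$ and contain a loop through $\underline{g},\overline{g}$. The decay in $K$ comes from the bubble-type sum
\[\sum_{|z-\overline g|\ge K}\langle z-\overline g\rangle^{2-d}\langle z-\underline g\rangle^{2-d}\cdots \lesssim K^{6-d}\]
(a triangle or bubble tail). The remaining far parts of the diagram are contracted by Corollary~\ref{cor: tree-reduct} and Corollary~\ref{cor: path-red}, giving $n^{(4-d)(k-1)-2}$ from the $\omega$-tree and $n^{4-2d}$ from the two arms to $x_I,x_J$. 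The separation $(x_0,\dots,\underline{g})\in G(\epsilon^R,n)$ gives constants $C(\epsilon,R)$.

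The main obstacle is making this diagram sum give the stated exponent with decay in $K$, rather than a $K$-independent bound. Branch points $u$ or $w$ far from $\overline{g}$, at distance of order $n$, must be handled by the path reduction, and the near region must yield a convergent triangle tail in $K$. A cruder route would lose this, which is presumably why the statement carries the weak power $K^{(6-d)/d}$. If the direct diagrammatic bound is hard to organize, we would first try an intermediate scale $K^{1/d}$. Either $\wto{\clust}(\overline{g})$ exits $B(\overline{g};K)$ near the arm geodesics, which is controlled by \eqref{eqn: KN}-type one-arm and bubble bounds, or the intersection point is at distance between $K^{1/d}$ and $K$, which is controlled by the triangle tail. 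Optimizing the two scales then produces an exponent of the form $(6-d)/d$.
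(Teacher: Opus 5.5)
Your first step is the paper's own: conditioning on $\clust(\overline g)$ and resampling it from an independent copy gives exactly \eqref{eq:minustree0}. What remains is replacing $\wto{\clust}(\overline g)$ by $\wto{\clust}_{B(\overline g;2K)}(\overline g)$, and there is a genuine gap in how you reduce that difference.

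\textbf{The near region is missing.} You claim that on the difference event the $\omega$-tree must meet $\wto{\clust}(\overline g)$ at a point $z$ with $|z-\overline g|\ge 2K$. This is false. The set $\wto{\clust}(\overline g)\setminus\wto{\clust}_{B(\overline g;2K)}(\overline g)$ can contain vertices inside the box, even next to $g$, that are joined to $\overline g$ only by open paths leaving $B(\overline g;2K)$. The $\omega$-tree may meet $\wto{\clust}(\overline g)$ only at such points, and no tail sum over $|z-\overline g|\ge 2K$ sees these configurations.

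In the paper this is a separate case that cannot be avoided. Take $w\in\wto{\clust}(\overline g)$ to be a vertex of an open spanning tree of $\{x_{\phi(0)},\dots,x_{\phi(k)},\underline g\}$. Then one of two things holds:
\begin{itemize}
\item either $|w-\overline g|\ge K^{1/d}$, and one uses the diagram from \eqref{eq:wbox};
\item or $|w-\overline g|<K^{1/d}$, and one of the events in \eqref{eq:wbox2} must occur. Each of these contains a crossing to $\partial B(\overline g;2K)$ disjoint from the arms to $x_I,x_J$, so \eqref{eqn: KN} gives a factor $K^{-2}$ against the volume $K$ of $B(\overline g;K^{1/d})$.
\end{itemize}
The intermediate scale is forced. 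Summing the weight of $w$ over the whole box costs at least order $K^2$, which cancels the one-arm factor. This split is the source of the exponent $(6-d)/d$. Your last paragraph gestures at such a split, but only as an optional reorganization. It does not recognize that the near case is a part of the event your main argument simply misses.

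\textbf{The weight of the intersection point is too large.} The tail $K^{6-d}$ you quote requires the intersection point to lie \emph{on} an open tree spanning the $\omega$-leaves. That is, $\omega\in\Gamma(w,A)\circ\Gamma(w,B)$ for a nontrivial partition $A\cup B$ of $\{x_{\phi(0)},\dots,x_{\phi(k)},\underline g\}$. The spanning-tree argument gives exactly this, since every open spanning tree must meet the obstacle. In diagram terms, a tree edge $\tau(a,b)$ is replaced by $\tau(a,w)\tau(w,b)$, so near $g$ the point carries weight $\langle w-\underline g\rangle^{2-d}$.

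The diagram you actually write is different. It attaches $z$ to the $\omega$-tree through a branch point $u$ with an extra factor $\tau(u,z)$, and uses convolutions such as $\sum_z\tau(u,z)\tau(z,w)$. This puts cluster density on both sides, with two consequences:
\begin{itemize}
\item near $g$ it produces $\sum_{|z-\overline g|\ge K}\langle z-\overline g\rangle^{8-2d}$, which diverges for $d\le 8$;
\item points $z$ at distance of order $n$ contribute a relative error of order $n^{8-d}$, which is not small for $d=7,8$.
\end{itemize}
So the step you label ``the main obstacle'' is not bookkeeping. The insertion structure must be in place before Corollaries~\ref{cor: tree-reduct} and~\ref{cor: path-red} can produce an error of order $n^{(4-d)k-2-d}$ times a decaying power of $K$.
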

In the special case that $k = 3$, taking $x_0 = 0$ for specificity, the expression \eqref{eq:ktrunc1} is
\begin{equation*}
    \mathbb{P}\left( 0 \lra  \underline{g},\, \{x_1 \lra \overline{g} \text{ off }\,C(\underline{g})\} \circ \{x_2 \lra \overline{g} \text{ off }\,C(\underline{g})\}
         \right)\ ,
\end{equation*}
and the expression \eqref{eq:ktrunc2} is
\begin{equation}
    \label{eq:ktrunc3pt2}
    \wto{\mathbb{E}}\left[\mathbbm{1}_{\{x_1 \lra \overline{g} \} \circ \{x_2 \lra \overline{g}\}} \mathbb{P}\left( 0 \sa{\Z^d \setminus \clust_{B(\overline{g}; 2K)}(\overline{g})} \underline{g} \right)\right] - C n^{6-3d} K^{(6-d)/d}\ .
    \end{equation}

\begin{proof}
We assume that $x_I = x_{k-1}$ and $x_J = x_k$, since the argument is virtually identical otherwise; then $\phi(i) = i$ for $i < k-1$.

For clarity, we note that the expression \eqref{eq:ktrunc1} is identical to 
\begin{equation}
\mathbb{P}\left(\begin{array}{c}\Gamma(x_0, \dots, x_{k-2}, \underline{g}), \mathcal{T}(x_0, \dots, x_{k-2}, \underline{g}) = \phi(T), \\
         \cap \{\clust(\underline{g}) \cap \clust(\overline{g}) = \varnothing \} \cap \left[\{x_{k-1} \lra \overline{g}\} \circ \{x_k \lra \overline{g}\}\right]
        \end{array}\right)\ .
        \label{eq:ktrunc1a}
\end{equation}
As in our argument for the three-point function at \eqref{eqn: apply-IIC} above, we diagramatically localize the non-intersection probability. We introduce two copies of our probability space and two independent copies $\prob, \wto{\prob}$ of our probability measure; we let $\omega$ and $\wto{\omega}$ denote corresponding typical sample points. 
The expression \eqref{eq:ktrunc1} above may be rewritten as
\begin{align}
    = \widetilde{\mathbb{E}}\left[ \mathbbm{1}_{\left\{\overline{g} \lra x_{k-1}\right\}\circ \left\{\overline{g} \lra x_{k}\right\}}   \prob\left( \Gamma_{\phi(T), k}(x_{0}, \dots, x_{k-1}, \underline{g}; \wto{\clust}(\overline{g})) \right) \right]\label{eq:minustree0}\ .
\end{align}

To understand \eqref{eq:minustree0}, we consider a more general setting where the cluster of $\overline{g}$ is replaced by an arbitrary vertex set. Let $\mathcal{D} \subseteq \Z^d$ and consider an outcome $\omega$ for which  $\Gamma_{\phi(T), k}(x_{0}, \dots, x_{k-1}, \underline{g})$ occurs but $\Gamma_{\phi(T), k}(x_{0}, \dots, x_{k-1}, \underline{g}; \mathcal{D})$ does not. We argue that certain connectivity properties must be satisfied in $\omega$. For this, we use a spanning tree argument. We emphasize that the trees discussed in the next paragraph are subtrees of clusters; in particular, they are subgraphs of $\mathbb{Z}^d$. We caution the reader not to confuse these with the abstract connectivity trees appearing in $\mathfrak{T}_{k}$.

Choose an arbitrary subtree of $\mathbb{Z}^d$ whose edges are open in the outcome $\omega$, having leaves $x_0, \dots, x_{k-2}, \underline{g}$. Choose some $w \in \mathcal{D}$ which is a vertex of this tree. Either $w$ is a leaf of the tree, or removing $w$ from this open tree produces at least two components containing distinct leaves $x_i$ and $x_j$. Thus, there exists a nontrivial partition $A \cup B = \{x_0, \dots, x_{k-2}, \underline{g}\}$ such that 
\[\omega \in \Gamma(A) \circ \Gamma(B)\ .\]
If $\mathcal{D}$ is replaced by the random set $\wto{\clust}$, then in the configuration $\wto{\omega}$, with the vertex $w$ chosen for $\omega$ as in the preceding sentences, we have
\begin{equation}
    \label{eq:longw}
    \wto{\omega} \in \left[\left\{\overline{g} \lra x_{k-1}\right\}\circ \left\{\overline{g} \lra x_{k}\right\} \right] \cap \{\overline{g} \lra w \}\ . 
\end{equation}
We would like to replace the event $\{\overline{g} \lra w \}$ above with $\{\overline{g} \sa{B(\overline{g}, 2K)} w \}$ to compare to \eqref{eq:ktrunc2}. If $(\omega, \wto{\omega})$ is not in the event appearing in \eqref{eq:ktrunc2}, or in other words, if any $w$ chosen as above satisfies $w \notin \wto{\clust}_{B(\overline{g}, 2K)}(\overline{g})$ in the outcome $\wto{\omega}$, either $w \notin B(\overline{g}; K^{1/d})$, or $w \in B(\overline{g}; K^{1/d})$, but every open path from $\overline{g}$ to $w$ exits $B(\overline{g}; 2K)$.

In the former case, for $w \notin B(\overline{g}; K^{1/d})$, the event in \eqref{eq:longw} implies there is some vertex $z$ on an open path from $\overline{g}$ to either $x_k$ or $x_{k-1}$
\begin{equation}\label{eq:wbox}
\begin{split}
    \Big(\bigcup_z \left[\{\overline{g} \lra x_{k-1}\} \circ \{\overline{g} \lra z \}\circ \{z \lra w\} \circ \{z \lra x_k\} \right]\Big)
       \\
       \cup \Big(\bigcup_z \left[\{\overline{g} \lra x_{k}\} \circ \{\overline{g} \lra z \}\circ \{z \lra w\} \circ \{z \lra x_{k-1}\} \right]\Big)
       \end{split}
\end{equation}
occurs. In the latter case, we choose disjoint $\gamma_1$, $\gamma_2$ realizing the open connections from $\overline{g}$ to $x_{k-1}$ and $x_{k}$ respectively and choose an open path from $w$ until its first intersection with $\gamma_1 \cup \gamma_2$. Choosing subpaths of these open paths, we find open paths witnessing
\begin{equation}
    \label{eq:wbox2}
    \begin{split}
        &\left\{\overline{g} \lra x_{k-1}\right\}\circ \left\{\overline{g} \lra x_{k}\right\} \circ \{w \lra \partial B(\overline{g}; 2K) \}\\
        \cup & \left\{w \lra x_{k-1}\right\}\circ \left\{\overline{g} \lra x_{k}\right\}  \circ \{\overline{g} \lra \partial B(\overline{g}; 2K) \}\\
                \cup & \left\{w \lra x_{k-1}\right\}\circ \left\{w \lra x_{k}\right\}  \circ \{\overline{g} \lra \partial B(\overline{g}; 2K) \}\ .
    \end{split}
\end{equation}

Returning to \eqref{eq:longw} and the associated discussion, and applying \eqref{eq:wbox} and \eqref{eq:wbox2}, we see that \eqref{eq:minustree0} is bounded below by
\begin{equation}
\label{eq:minusterms}
\begin{split}
      &~\widetilde{\mathbb{E}}\left[ \mathbbm{1}_{\left\{\overline{g} \lra x_{k-1}\right\}\circ \left\{\overline{g} \lra x_{k}\right\}}   \prob\left( \Gamma_{\phi(T), k}(x_{0}, \dots, x_{k-1}, \underline{g}; \wto{\clust}(\overline{g}) \cap B(\overline{g}; 2K) \right) \right] \nonumber\\
     &\quad - \sum_{A, B}\sum_{\substack{w \notin B(\overline{g}; K^{1/d}) \\ z \in \Z^d}} \prob \left( \Gamma(w, A)  \right) \prob \left( \Gamma(w, B)  \right)\\ 
     &\qquad \qquad \tau(\overline{g}, z) \tau(z, w) \left[\tau(\overline{g}, x_{k-1}) \tau(z, x_k) + \tau(\overline{g}, x_{k}) \tau(z, x_{k-1}) \right]\\
     &\quad - \sum_{A, B}\sum_{\substack{w \in B(\overline{g}; K^{1/d}) \\ z \in \Z^d}} \prob \left( \Gamma(w, A)  \right) \prob \left( \Gamma(w, B)  \right)\\
     &\qquad \qquad \prob(0 \lra \partial B(K))\left[\tau(\overline{g}, x_{k-1}) \tau(\overline{g}, x_k) + \tau(w, x_{k-1}) \tau(\overline{g}, x_k) + \tau(\overline{g}, x_{k-1}) \tau(w, x_k)\right]\ .
     \end{split}
\end{equation}
We upper bound the magnitude of the first negative term from \eqref{eq:minusterms} for a fixed choice of $A$ and $B$. Applying Lemma~\ref{lem:apriori} to control the probabilities of the $\Gamma$ events, we see the negative term above is bounded in magnitude by
\begin{align*}
    C n^{(4-d)k-6}\sum_{\substack{w \notin B(\overline{g}; K^{1/d}) \\ z \in \Z^d}} \langle z-w\rangle^{2-d} \langle z-x_k\rangle^{2-d} \langle \overline{g}-z\rangle^{2-d} \mathrm{dist}(w, \{x_0, \dots, x_{k-2}, g\})^{4-d}\ .
\end{align*}
The dominant contribution to the above sum comes from $w$ and $z$ within distance of order $n$ from $\{x_0, \dots, x_{k-1}, g\}$. We thus bound by
\begin{align}
        \leq&~C n^{(4-d)k-2-d} n^{-2}\sum_{\substack{w \notin B(\overline{g}; K^{1/d})}} \langle z-w\rangle^{2-d} \langle \overline{g}-z\rangle^{2-d}  \mathrm{dist}(w, \{x_0, \dots, x_{k-2}, g\})^{4-d} \nonumber\\
        \leq&~C n^{(4-d)k-2-d} n^{-2}\sum_{\substack{w \notin B(\overline{g}; K^{1/d})}} \langle \overline{g}-w\rangle^{8-2d}\nonumber \\
        \leq&~C n^{(4-d)k - 2 - d} K^{(6-d)/d}\ , \label{eq:Kest1}
\end{align}
where we have used the standard convolution estimate \eqref{eqn: std-conv} and isolated the dominant contribution with $w$ closest to $g$.

The other term is bounded similarly, now also using the one-arm probability estimate \eqref{eqn: KN}. This leads to the bound
\begin{align}
    \nonumber &C K^{-2} n^{(4-d)(k-1)-4} \sum_{\substack{w \in B(\overline{g}; K^{1/d})}} \mathrm{dist}(w, \{x_0, \dots, x_{k-2}, g\})^{4-d}\\
    \nonumber &\qquad  \left[ \langle\overline{g}- x_{k-1}\rangle^{2-d} \langle\overline{g}- x_k\rangle^{2-d} + \langle w- x_{k-1}\rangle^{2-d} \langle\overline{g}- x_k\rangle^{2-d} + \langle\overline{g}- x_{k-1}\rangle^{2-d} \langle w- x_k\rangle^{2-d}\right]\ ,
\end{align}
which is at most
\begin{align}
    \nonumber &C K^{-2} n^{(4-d)(k-1)-4} \sum_{\substack{w \in B(\overline{g}; K^{1/d})}}   \langle w - \overline{g} \rangle^{4-d} n^{4-2d}\\
    \label{eq:Kest2} \leq~&C K^{-1} n^{(4-d)k-4 -d}\ .
\end{align}

Applying the estimates \eqref{eq:Kest1} and \eqref{eq:Kest2} in \eqref{eq:minusterms} shows \eqref{eq:ktrunc2} and completes the proof.
\end{proof}

\subsection{Bubble lemmas}
We have proved several lemmas about the tree-like behavior of open clusters conditional on $\Gamma(x_0, \dots, x_k)$. We also require some refined estimates for the behavior near an interior vertex (i.e.~near a first common pivotal edge) of this tree. The analysis has much the same spirit, but is slightly more complex in some ways because an interior vertex necessarily exhibits multiple long disjoint open connections. On the other hand, the inductive analysis of Section \ref{sec:inductive} focuses on particularly simple interior vertices which are adjacent to at least two leaves of the connectivity tree. 

In this section, we prove two lemmas which provide the necessary control on clusters near an interior vertex. The first result, Lemma~\ref{beta-lem2}, is similar in spirit to Lemma~\ref{lem:newcut}. For its statement, we define
\begin{lemma}   \label{beta-lem2}
    Let $G$ be any  real function defined on vertex subsets of $B(2K)$; suppose $x_1, x_2 \notin B(2K)$. For each edge $f$, we have
    \begin{equation}
\label{eq:fexpandlem}
\mathbb{E}\left[G(\clust_{B(2K)}(0)); \,  \Gamma(0, x_1, x_2) \cap \mathcal{P}_0 \cap \{f = \upiv(0, x_2)\} \right]
\end{equation}
is equal to
\begin{equation}
\label{eq:fexpandlem2}
\beta  \mathbb{E}\left[G(\clust_{B(2K)}(\{0, \overline{f}\})); \,  0 \Longleftrightarrow \underline{f}, \, 0 \leftrightarrow x_1, \, \overline{f} \leftrightarrow x_2 \text{ off } \clust(\underline{f}) \right]\ .
\end{equation}
\end{lemma}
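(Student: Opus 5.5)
The plan is to run the same edge-switching argument as in Lemma~\ref{lem:newcut}, now applied to the edge $f$ and keeping track of the functional $G$. Let $\psi$ close $f$ and leave every other edge unchanged. On the event in \eqref{eq:fexpandlem} the edge $f$ is open, because it is a pivotal. So $\psi$ is a bijection from that event onto its image, and the measure picks up exactly the factor $(1-p_c)/p_c$. Hence
\[\eqref{eq:fexpandlem}=\beta\,\mathbb{E}\bigl[G(\clust_{B(2K)}(0))\circ\psi^{-1};\ \psi(\text{event})\bigr].\]
Two things remain to show:
\begin{enumerate}
\item $\psi$ maps the event $\Gamma(0,x_1,x_2)\cap\mathcal{P}_0\cap\{f=\upiv(0,x_2)\}$ exactly onto $\{0\Longleftrightarrow\underline f,\ 0\lra x_1,\ \overline f\lra x_2 \text{ off }\clust(\underline f)\}$, with $f$ closed;
\item for $\omega$ in the image, $\clust_{B(2K)}(0)$ in $\psi^{-1}(\omega)$ equals $\clust_{B(2K)}(\{0,\overline f\})$ in $\omega$.
\end{enumerate}

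\textbf{Forward inclusion.} Let $\omega$ lie in the left event. Since $f=\upiv(0,x_2)$ is the pivotal closest to $0$, no earlier edge separates $0$ from $\underline f$. By Menger's theorem there are two edge-disjoint open paths from $0$ to $\underline f$ that avoid $f$. After closing $f$, these paths survive, so $0\Longleftrightarrow\underline f$.

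Next consider $0\lra x_1$. The event $\mathcal{P}_0$ says that $\piv(0,x_1)$ and $\piv(0,x_2)$ are disjoint. So $f$ is not pivotal for $0\lra x_1$, and this connection survives when $f$ is closed.

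Finally, $f$ is pivotal for $0\lra x_2$. After closing it, $x_2$ is disconnected from $0$, and hence from $\underline f$. The part of an open $0\to x_2$ path after $f$ is still open, so $\overline f\lra x_2$ holds off $\clust(\underline f)$.

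\textbf{Reverse inclusion.} Let $\omega$ lie in the right event, so $f$ is closed and $\clust(\underline f)\neq\clust(\overline f)$. Reopen $f$. Then $0\lra x_2$ through $f$, so $\Gamma(0,x_1,x_2)$ holds. Every path from $0$ to $x_2$ must cross $f$ from $\underline f$ to $\overline f$, so $f$ is pivotal for $0\lra x_2$.

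It is the closest such pivotal. Any pivotal before $f$ would have to separate $0$ from $\underline f$, and this is impossible because of the double connection $0\Longleftrightarrow\underline f$.

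To get $\mathcal{P}_0$, suppose some edge $e$ were pivotal for both $0\lra x_1$ and $0\lra x_2$. The pivotals for $x_2$ are $f$ followed by edges in $\clust(\overline f)$. The edge $e$ cannot be $f$: the path $0\lra x_1$ exists in $\omega$, which does not use $f$, and it stays open. The edge $e$ cannot lie in $\clust(\overline f)$ either, since $x_1\in\clust(0)=\clust(\underline f)$ in $\omega$ and $\clust(\overline f)$ is disjoint from it. So the pivotal sets are disjoint.

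\textbf{Cluster identity.} After reopening $f$, the open cluster of $0$ is $\clust(\underline f)\cup\{f\}\cup\clust(\overline f)$, using $0\in\clust(\underline f)$. Restricting to $B(2K)$, connections through $f$ occur inside the box when $f\subset B(2K)$, and otherwise $f$ is irrelevant. In either case the restricted cluster of $0$ in $\psi^{-1}(\omega)$ is the restricted open cluster of the set $\{0,\overline f\}$ in $\omega$, which is exactly the argument of $G$ in \eqref{eq:fexpandlem2}.

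\textbf{Expected obstacle.} The delicate step is the reverse inclusion, specifically verifying that $f$ is the \emph{first} pivotal and that $\mathcal{P}_0$ holds. These rely on the double connection and on the disjointness of the two clusters. The box-restriction convention for $f$ near the boundary of $B(2K)$ also needs a careful statement, matching the definition of $\clust_{B(2K)}(\{0,\overline f\})$.
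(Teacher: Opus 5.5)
Your switching argument is the paper's own: close $f$ to gain the factor $\beta$, get $0\Longleftrightarrow\underline{f}$ from Menger, keep $0\lra x_1$ via $\mathcal{P}_0$, and get $\overline{f}\lra x_2$ off $\clust(\underline{f})$ from pivotality. Your reverse inclusion is correct and more explicit than the paper's ``proved similarly.''

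\textbf{The gap is the cluster identity.} With $\clust_{B(2K)}$ the cluster in the open subgraph of $B(2K)$ (the paper's definition, and the one you use), reopening $f$ adds $\clust_{B(2K)}(\overline{f})[\omega]$ to the restricted cluster of $0$ only if $\underline{f}\in B(2K)$ and $0\sa{B(2K)}\underline{f}$ in $\omega$. Your sentence ``connections through $f$ occur inside the box when $f\subset B(2K)$'' assumes this, but $0\Longleftrightarrow\underline{f}$ does not supply it. Both edge-disjoint connections may leave the box even when $f$ is deep inside it, so this is not a boundary effect. Likewise, when $\overline{f}\in B(2K)$ but $\underline{f}\notin B(2K)$, the edge $f$ is not ``irrelevant'': $\overline{f}$ belongs to $\clust_{B(2K)}(\{0,\overline{f}\})[\omega]$ but not to $\clust_{B(2K)}(0)[\psi^{-1}(\omega)]$.

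Concretely, take $f\subset B(2K)$ and $G(S)=\mathbf{1}\{\overline{f}\in S\}$, and let $A$ be the event in \eqref{eq:fexpandlem}. The right side of the lemma is then $\beta\prob(\psi(A))=\prob(A)$. The left side is $\prob(A)$ minus the probability that $A$ occurs with no open path inside $B(2K)$ from $0$ to $\underline{f}$. That probability is positive, since it is forced by a suitable configuration on finitely many edges. So the identity for arbitrary $G$ cannot be proved as stated.

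The paper's proof asserts the same identity assuming only $0\lra\underline{f}$. You have inherited this imprecision rather than introduced it, and it is harmless where the lemma is used. There are two repairs.
\begin{enumerate}
\item Read $\clust_{B(2K)}(A)$ as $\clust(A)\cap B(2K)$. Then your correct observation $\clust(0)[\psi^{-1}(\omega)]=\clust(0)[\omega]\cup\clust(\overline{f})[\omega]$ gives the identity at once.
\item Keep the restricted cluster. Your argument then proves the identity on the sub-event where $f\subset B(2K)$ and $0\sa{B(2K)}\underline{f}$, and the remainder must be carried as an error term.
\end{enumerate}
For the second repair, in Lemma~\ref{lem:genlem1} we have $G\in[0,1]$. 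The terms with $f\notin B(M)$ are negligible on both sides by Lemma~\ref{lem: bub-trunc}. For $f\in B(M)$, every open connection from $0$ to $\underline{f}$ in the remainder leaves $B(2K)$. So the remainder is at most $\tau(\overline{f},x_2)\,\prob(\{0\lra x_1\}\circ\{0\lra\partial B(2K)\})\le CK^{-2}n^{4-2d}$, by the argument in the proof of Lemma~\ref{lem: double-connection-trunc}. This is exactly the size of error the paper already absorbs at that step.
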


\begin{proof}
We note that, for each outcome in the event from \eqref{eq:fexpandlem}, the edge $f$ is open. Consider the mapping $\varphi: \, \{0, 1\}^{\edges} \to \{0, 1\}^{\edges}$, where $\varphi(\omega)$ is obtained from $\omega$ by closing $f$ (if open in $\omega$) and leaving the status of all other edges unchanged. It is immediate that, if $A \subseteq \{f \text{ is open}\}$ is measurable, then $\prob(A) = \beta \prob(\varphi(A))$, and $\varphi: A \to \varphi(A)$ is a bijection.

Note further that, if $f$ is open in $\omega$ and $0 \lra \underline{f}$ in $\omega$, then
\[\clust_{B(2K)}(0)[\omega] = \clust_{B(2K)}(\{0, \overline{f}\})[\varphi(\omega)] \]
where we introduce square brackets to denote dependence on the configuration. The result will thus follow if we can show
\begin{equation}
\label{eq:pop1}
\varphi(\{\Gamma(0, x_1, x_2) \cap \mathcal{P}_0 \cap \{f = \upiv(0, x_2)\})  
\end{equation}
is equal to
\begin{equation}
\label{eq:pop2}
\{  0 \Longleftrightarrow \underline{f}, \, 0 \leftrightarrow x_1, \, \overline{f} \leftrightarrow x_2 \text{ off } \clust(\underline{f})\}\ . 
\end{equation}

We show that the event in \eqref{eq:pop1} is contained in the event from \eqref{eq:pop2}; the other containment is proved similarly. Consider an outcome $\omega$ from the event inside $\varphi$ in \eqref{eq:pop1}.   Since $f$ is a pivotal edge for $\{0 \lra x_2\}$ and this event occurs, there is an open connection from $0$ to $\underline{f}$ that does not use $f$. Since $f$ is the first such pivotal, Menger's theorem implies that there are two edge-disjoint such connections. When applying $\varphi$, these connections still exist, showing that $\varphi(\omega)$ exhibits the connections from $0$ to $\underline{f}$ described in \eqref{eq:pop2}.

We note that $\omega \in \{0 \lra x_1\}$. Since $\omega \in \mathcal{P}_0$, the edge $f$ cannot be pivotal in $\omega$ for $\{0 \lra x_1\}$; thus, $\varphi(\omega) \in \{0 \lra x_1\}$. Finally, we note $\overline{f}$ has an open connection $\gamma$ to $x_2$, with $f \notin \gamma$ in $\omega$. There can be no open path avoiding $f$ from $\underline{f}$ to $\gamma$, since then $f$ would not be pivotal for $\{0 \lra x_2\}$. In particular, $\overline{f} \notin \clust(\underline{f})$ in $\varphi(\omega)$, but since $\gamma$ is still open in $\varphi(\omega)$, we see $\varphi(\omega) \in \{\overline{f} \leftrightarrow x_2 \text{ off } \clust(\underline{f}\}$.

Pulling the arguments in the last two paragraphs together, we see the claimed inclusion of \eqref{eq:pop1} into \eqref{eq:pop2}.
\end{proof}

Our second lemma controlling clusters near interior vertices appears below as Lemma~\ref{lem: bub-trunc}. Open clusters are not typically trees; when $\{0 \lra x_1 \} \circ \{0 \lra x_2\}$ occurs, there are often open paths from $x_1$ to $x_2$ which do not contain $0$. Lemma~\ref{lem: bub-trunc} controls in a certain sense the maximal distance between such an open path and $0$. 
\begin{lemma}
\label{lem: bub-trunc}
Let $\epsilon > 0$ be fixed. There exists a $C = C(\epsilon) > 0$ such that, uniformly in $M \geq 1$, in all $n \geq 2 M/ \epsilon$, and all $x_1, x_2 \in G(\epsilon, n)$, we have
\begin{equation}\label{eq:bubble-trunc}
    \sum_{f \not\in B(M)} \widetilde{\mathbb{P}}( 0\iff\underline{f},\,0\leftrightarrow x_1,\overline{f}\leftrightarrow x_2\,\text{off}\,\widetilde{C}(\underline{f}), \mathcal{P}(0,x_1)\cap\mathcal{P}(\overline{f},x_2)=\varnothing) \leq C n^{4-2d} M^{6-d} \ .
\end{equation}
\end{lemma}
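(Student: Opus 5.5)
We bound each summand by dropping the non-intersection constraints and reducing to a diagram via the BK inequality \eqref{eqn: BK}, and then evaluate the diagram with the convolution estimates. The event in \eqref{eq:bubble-trunc} contains two edge-disjoint open paths from $0$ to $\underline{f}$. It also contains an open path from $0$ to $x_1$ and an open path from $\overline{f}$ to $x_2$ that avoids $\clust(\underline{f})$, and hence avoids the two paths from $0$ to $\underline{f}$ and the edge $f$. The path from $0$ to $x_1$ need not be disjoint from the double connection. We therefore let $v$ be the last vertex on a chosen path $0\to x_1$ that lies on the union $\gamma_1\cup\gamma_2$ of the two edge-disjoint paths from $0$ to $\underline{f}$. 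Splitting $\gamma_1\cup\gamma_2$ at $v$, as in the proof of Lemma~\ref{lem:rhofinite}, we obtain disjoint occurrence of
\[
\{0\lra v\}\circ\{v\lra \underline{f}\}\circ\{0\lra \underline{f}\}\circ\{v\lra x_1\}\circ\{\overline{f}\lra x_2\}.
\]
(The degenerate cases $v=0$ and $v=\underline f$ are included.) The connection $\overline f\lra x_2$ avoids $\clust(\underline f)$, so it is edge-disjoint from all the others. The hypothesis $\mathcal{P}(0,x_1)\cap \mathcal{P}(\overline f,x_2)=\varnothing$ is not needed for an upper bound.

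By BK and \eqref{eqn: HHS}, each summand is at most
\[
C\sum_{v}\langle v\rangle^{2-d}\langle v-\underline f\rangle^{2-d}\langle \underline f\rangle^{2-d}\langle v-x_1\rangle^{2-d}\langle \overline f - x_2\rangle^{2-d}.
\]
Consider first $|\underline f|\le \epsilon n/2$, so that $|\overline f-x_2|\ge \epsilon n/4$; this uses $n\ge 2M/\epsilon$ only to make the region $M\le|\underline f|\le\epsilon n/2$ nonempty and consistent. The $v$-sum is a triple product, so \eqref{eqn: triple-sum}, or more precisely \eqref{eq:finitebubb4} with $u=\underline f$ and $x_1$ in the role of $ne_1$, gives
\[
\sum_v \langle v\rangle^{2-d}\langle v-\underline f\rangle^{2-d}\langle v-x_1\rangle^{2-d}\le C n^{2-d}\langle \underline f\rangle^{4-d}.
\]
Together with $\langle \underline f\rangle^{2-d}$ and $\langle\overline f-x_2\rangle^{2-d}\le Cn^{2-d}$, the summand is at most $Cn^{4-2d}\langle \underline f\rangle^{6-2d}$. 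Summing over $|\underline f|\ge M$, which is convergent because $2d-6>d$ when $d>6$, gives $Cn^{4-2d}M^{6-d}$, which is the claimed bound.

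The main obstacle is the region $|\underline f|>\epsilon n/2$, where $\overline f$ may come close to $x_2$ and $v$ may come close to $x_1$. Here I would use the general form of \eqref{eqn: triple-sum} and split according to which of $0,\underline f,x_1$ is closest to $v$. The factor $\langle\underline f\rangle^{2-d}\le Cn^{2-d}$ then supplies the decay that the $|\underline f|$-sum lacks. The resulting terms look like $n^{2-d}\cdot n^{4-d}\sum_{f}\langle \overline f-x_2\rangle^{2-d}\times(\text{decay in } |\underline f|)$, and I would bound them by \eqref{eqn: std-conv}. Near $x_2$ the sum of $\langle\overline f-x_2\rangle^{2-d}$ over a ball of radius $\epsilon n$ is only $O(n^2)$, and far away the decay is $\langle \underline f\rangle^{6-2d}$. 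This should give a bound of order $n^{4-2d}\cdot n^{6-d}$, which is at most $C(\epsilon)n^{4-2d}M^{6-d}$ since $M\le \epsilon n/2$. The constants here depend on $\epsilon$, which the statement allows.
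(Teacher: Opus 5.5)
Your proposal is correct and follows the paper's proof. Both arguments produce the same five-fold disjoint occurrence by attaching the path to $x_1$ to the double connection $\gamma_1\cup\gamma_2$. Both then apply BK and the triple-sum estimate \eqref{eqn: triple-sum} at the attachment vertex, which yields $Cn^{4-2d}\sum_{|\underline f|\ge M}\langle\underline f\rangle^{6-2d}$. Your explicit treatment of $|\underline f|>\epsilon n/2$, bounded by $C(\epsilon)n^{10-3d}\le C(\epsilon)n^{4-2d}M^{6-d}$ via $M\le \epsilon n/2$, fills in what the paper only asserts as the ``dominant contribution'' from $w$ near $0$; this is also where the hypothesis $n\ge 2M/\epsilon$ is actually used.
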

\begin{proof}
    Consider an outcome $\omega$ in the event appearing in \eqref{eq:bubble-trunc}. In $\omega$, there exist disjoint open paths $\gamma_1, \gamma_2$ connecting $0$ to $\underline{f}$ and a third disjoint open path connecting $\overline{f}$ to $x_2$. Following an open path of $\omega$ from $z$ to its first intersection with $\gamma_1 \cup \gamma_2$, we find witnesses for
    \[\{0 \lra z \} \circ \{z \lra \underline{f}\} \circ \{0 \lra \underline{f}\} \circ \{z \lra x_1\} \circ \{\overline{f} \lra x_2\}\ . \]
    Applying the BK inequality, we see the sum appearing in \eqref{eq:bubble-trunc} is bounded by
    \begin{align*}
        \sum_{f \not\in B(M)} \tau(0, z) \tau(z, \underline{f}) \tau(0, \underline{f}) \tau(z, x_1) \tau(\overline{f}, x_2) &\leq C \sum_{w \not\in B(M)} \langle z\rangle^{2-d} \langle z - w \rangle^{2-d} \langle w \rangle^{2-d} \langle z - x_1 \rangle^{2-d} \langle w - x_2 \rangle^{2-d}\ .
        \end{align*}

        Applying \eqref{eqn: triple-sum} to perform the $z$ sum and noting that the dominant contribution comes from $w$ which are closer to $0$ than to $x_1$ or $x_2$, we see the previous expression is at most
        \begin{align*}
         C \sum_{w \not\in B(M)}  \langle w \rangle^{6-2d} \langle w - x_1 \rangle^{2-d} \langle w - x_2 \rangle^{2-d}\ .
    \end{align*}
    Since $x_1, x_2 \in F(\epsilon, n)$ and since $d > 6$, the sum is again dominated by $w$ in the regime $|w - x_i| \geq \epsilon n/4$.  We apply \eqref{eqn: HHS} to bound the last line by
    \[C n^{4-2d} \sum_{w \not\in B(M)}  \langle w \rangle^{6-2d} \leq C n^{4-2d} M^{6-d}\ , \]
    and the result follows.
\end{proof}

The final lemma in this subsection is of a very similar type to the last lemma; it again localizes branching near an interior vertex of a large open cluster.
\begin{lemma} \label{lem: double-connection-trunc}
Let $\epsilon > 0$ be fixed. There exists a $C = C(\epsilon) > 0$ such that, for all $1 \leq M \leq K \leq \epsilon n / 2$ and edges $f \in B(M)$, vertices $x_1, x_2 \in F(\epsilon, n)$, we have
\begin{align*}
\wto{\mathbb{E}}[\mathbf{1}_{ 0\Longleftrightarrow \underline{f}, \, 0 \leftrightarrow x_1}\dwt{\mathbb{P}}\big( \, \overline{f}\lra x_2 \text{ off } \wto{\clust}_{B(2K)}(\underline{f})\big) ]
- \wto{\mathbb{E}}[\mathbf{1}_{ 0\stackrel{B(2K)}{\Longleftrightarrow} \underline{f}, \, 0 \leftrightarrow x_1}\dwt{\mathbb{P}}\big( \, \overline{f}\lra x_2 \text{ off } \wto{\clust}_{B(2K)}(\underline{f})\big) ] \leq C K^{-2} n^{4-2d}\ .
\end{align*}
\end{lemma}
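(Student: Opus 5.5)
The difference of the two expectations is nonnegative, since $\{0\stackrel{B(2K)}{\Longleftrightarrow}\underline f\}\subseteq\{0\Longleftrightarrow\underline f\}$ and the integrand is otherwise identical. Bounding the inner $\dwt{\prob}$ probability by one, it suffices to show
\[
\wto{\prob}\Big(0\Longleftrightarrow\underline f,\ 0\lra x_1,\ \neg\big\{0\stackrel{B(2K)}{\Longleftrightarrow}\underline f\big\}\Big)\le CK^{-2}n^{2-d}.
\]
This is stronger than we need, since the $\dwt{\prob}$ factor still contributes a factor $\tau(\overline f,x_2)\le Cn^{2-d}$. Concretely, we keep that factor and aim for
\[
\wto{\prob}(\cdots)\,\sup_{\mathcal D}\dwt{\prob}\big(\overline f\lra x_2 \text{ off } \mathcal D\big)\le \wto{\prob}(\cdots)\, C\langle \overline f-x_2\rangle^{2-d}\le \wto{\prob}(\cdots)\,C n^{2-d}.
\]
Here $|x_2|\ge\epsilon n$ and $f\in B(M)\subseteq B(\epsilon n/2)$, so $|\overline f-x_2|\ge \epsilon n/2$. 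This reduces the lemma to the bound $\wto{\prob}(\cdots)\le CK^{-2}n^{2-d}$.

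\textbf{Geometric step.} On the event in question, pick two edge-disjoint open paths $\gamma_1,\gamma_2$ from $0$ to $\underline f$. They cannot both lie in $B(2K)$, so one of them, say $\gamma_1$, reaches a vertex $y\in\partial B(2K)$. Since $0,\underline f\in B(M)\subseteq B(K)$, the path $\gamma_1$ splits into disjoint pieces $0\lra y$ and $y\lra \underline f$, and these are disjoint from $\gamma_2$. Next take an open path from $x_1$ and stop it at its first hit $z$ of $\gamma_1\cup\gamma_2$. Splitting according to which arc $z$ lies on gives witnesses, up to relabeling, of one of the events
\[
\{0\lra z\}\circ\{z\lra y\}\circ\{y\lra\underline f\}\circ\{0\lra\underline f\}\circ\{z\lra x_1\}
\]
or
\[
\{0\lra z\}\circ\{z\lra \underline f\}\circ\{0\lra y\}\circ\{y\lra\underline f\}\circ\{z\lra x_1\}.
\]
The case where $z$ lies on $\gamma_2$ is analogous, with $\gamma_2$ taking the role of the arc through $z$. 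We then apply BK \eqref{eqn: BK} and \eqref{eqn: HHS}, summing over $y\in\partial B(2K)$ and $z\in\Z^d$.

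\textbf{Diagram estimate.} This is the main step. Since $|0-\underline f|\le M\le K$ and $|y|\ge 2K$, we have $\tau(0,y),\tau(y,\underline f)\le CK^{2-d}$. The factor $\tau(0,\underline f)$ is at most $1$. There are $O(K^{d-1})$ choices of $y$. For the $z$-sum we use \eqref{eqn: triple-sum}, or split into $|z|\le 4K$ and $|z|>4K$; since $|x_1|\ge \epsilon n\ge 2K$, the factor $\tau(z,x_1)$ is of order $n^{2-d}$ unless $z$ is near $x_1$, and the region near $x_1$ is handled by \eqref{eqn: std-conv}. In the first configuration this gives roughly
\[
K^{d-1}\,K^{2-d}\,\tau(0,\underline f)\sum_z\langle z\rangle^{2-d}\langle z-y\rangle^{2-d}\langle z-x_1\rangle^{2-d}\lesssim K^{d-1}K^{2-d}K^{4-d}n^{2-d}=K^{5-d}n^{2-d},
\]
using $\tau(0,\underline f)\le 1$. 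In the second configuration, $\tau(0,y)\tau(y,\underline f)\le CK^{4-2d}$, and the $z$-sum is bounded by $\sum_z\langle z\rangle^{2-d}\langle z-\underline f\rangle^{2-d}n^{2-d}$ plus the tail. Crude bounds are delicate here, because that inner convolution behaves like $\langle \underline f\rangle^{4-d}$ but the sum over $z$ must be cut at scale $n$. The resulting bound is of order $K^{d-1}K^{4-2d}n^{2-d}\cdot O(1)=K^{3-d}n^{2-d}$.

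Since $d>6$, both $K^{5-d}$ and $K^{3-d}$ are at most $K^{-2}$, which gives the claim. I expect the main care to go into the bookkeeping of the $z$-sum when $z$ lies near $x_1$ or far from both $0$ and $x_1$. The first case is handled by \eqref{eqn: std-conv} and the second by the decay $\langle z\rangle^{4-2d}$. An alternative that avoids the explicit sum over $y$ is to replace the two factors through $y$ by a one-arm-type bound, $\prob(0\lra\partial B(2K))\le CK^{-2}$ from \eqref{eqn: KN}. This would give $K^{-2}$ directly, at the cost of needing BK with an arm event in place of two-point functions.
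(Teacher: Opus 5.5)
Your proof is correct, but your main route differs from the paper's.

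The paper uses a single geometric observation. On the difference event, the two edge-disjoint paths from $0$ to $\underline f$ form a cycle, and the $x_1$-arm attaches to it at some vertex $z$. One of the two arcs of this cycle from $0$ to $z$ must leave $B(2K)$, so $\{0\lra x_1\}\circ\{0\lra \partial B(2K)\}$ occurs. BK together with the Kozma--Nachmias bound \eqref{eqn: KN} then gives $CK^{-2}\tau(0,x_1)$ at once, and multiplying by $\tau(\overline f,x_2)$ gives the claim. This is essentially the alternative you sketch at the end, but simpler than you suggest: no sum over $z$ and no five-fold disjoint occurrence are needed, only two arms.

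Your main route instead tracks the excursion of $\gamma_1$ through a point $y\in\partial B(2K)$ together with the attachment point $z$. You bound everything by two-point functions and \eqref{eqn: triple-sum}. Your estimates check out:
\begin{itemize}
\item for $z$ on $\gamma_1$ (including the symmetric case with $z$ between $y$ and $\underline f$), you get $K^{5-d}n^{2-d}$;
\item for $z$ on $\gamma_2$, you get $K^{3-d}\langle \underline f\rangle^{4-d}n^{2-d}$.
\end{itemize}
Since $d\ge 7$, both are $O(K^{-2}n^{2-d})$.

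Your route avoids the one-arm estimate entirely, using only \eqref{eqn: HHS}, BK and convolution bounds. It also gives sharper decay $K^{5-d}$, reflecting that the loop must reach distance $2K$ and come back. The paper's route buys brevity and avoids all diagram bookkeeping.

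One small slip: your opening sentence says that bounding the $\dwt{\prob}$-factor by one reduces matters to $\wto{\prob}(\cdots)\le CK^{-2}n^{2-d}$. It would actually require $n^{4-2d}$ there. You correct this immediately by keeping the factor $\tau(\overline f,x_2)\le Cn^{2-d}$, which is what the argument actually uses.
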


\begin{proof}
For parameters $M, K, n$ in the above-mentioned ranges, we upper-bound the difference of expectations by
\begin{align}
    \wto{\mathbb{E}}\left[ \left(\mathbf{1}_{ 0\Longleftrightarrow \underline{f}, \, 0 \leftrightarrow x_1} - \mathbf{1}_{ 0\stackrel{B(2K)}{\Longleftrightarrow} \underline{f}, \, 0 \leftrightarrow x_1} \right)\tau(\overline{f}, x_2) \right] =\wto{\mathbb{E}}\left[ \mathbf{1}_{ 0\Longleftrightarrow \underline{f} \text{ through } \Z^d \setminus B(2K), \, 0 \leftrightarrow x_1}\tau(\overline{f}, x_2) \right]\ .\label{eq:doublelong}
\end{align}
We  claim that for outcomes
\begin{equation} \label{eq:doublewto} \wto{\omega} \in \left\{0\Longleftrightarrow \underline{f} \text{ through } \Z^d \setminus B(2K)\right\} \cap \{ 0 \leftrightarrow x_1\}\ ,\end{equation}
we also have
\begin{equation} \label{eq:doubletwo2} \wto{\omega} \in \{0 \leftrightarrow x_1\}\circ \{0\leftrightarrow \partial B(2K)\}.\end{equation}

Indeed, in an outcome $\wto{\omega}$ as in \eqref{eq:doublewto}, the vertex $0$ has two edge-disjoint open connections $\gamma_1, \gamma_2$ to $\underline{f}$, with (by relabeling if necessary)  some vertex $\gamma_1 \setminus B(2K) \neq \varnothing$. Defining the cycle $\gamma = \gamma_1 \cup \gamma_2$, 
there is a vertex $z$ (possibly equal to $x_1$) on $\gamma$ such that $z \sa{\Z^d \setminus \gamma} x_1$ occurs in $\wto{\omega}$. Then one of the disjoint subpaths of $\gamma$ from $0$ to $z$ exits $B(2K)$; this, along with the other subpath of $\gamma$ and the disjoint open path from $z$ to $x_1$ provide witnesses for the connections in \eqref{eq:doubletwo2}, showing that equation holds.

From this, we find that \eqref{eq:doublelong} is bounded by
\[\tau(\overline{f},x_2)\wto{\mathbb{P}}(0\leftrightarrow x_1\circ 0\leftrightarrow \partial B(2K))\le CK^{-2}\tau(0,x_1)\tau(\overline{f},x_2),\]
where we used the Kozma-Nachmias bound \eqref{eqn: KN}. Applying \eqref{eqn: HHS} completes the proof.
\end{proof}

\section{Extensions of IIC convergence \label{sec:iic}}
We need the following upgraded version of the IIC convergence result \eqref{eqn: IIC-conv2}. It allows us to condition on connections to many vertices at once.

Recall the definition of $\Gamma_{T, i}(\cdot)$ from \eqref{eq:GTdef} above.
\begin{lemma} \label{lem:iicnew2}
    Fix an integer $k \geq 1$ and a tree $T \in \mathfrak{T}_k$.
    For each $n \geq 1$, suppose we have 
    a sequence $( \{ x_0^{(n)}, \dots, x_k^{(n)}\})_{n=1}^\infty$ where for each $n$,
    $(x_0^{(n)}, x_1^{(n)}, \dots, x_k^{(n)}) \in G(\epsilon, n)$, and  where $x_i^{(n)} = 0$ for all $n$ for some arbitrary $0 \leq i \leq k$. Suppose also that there exists a $c > 0$ such that
    \begin{equation}\liminf_{n \to \infty} n^{(d-4)k + 2}\tau_T\left(x_0^{(n)}, \dots, x_{k}^{(n)} \right) \geq  c \ .
    \label{eq:assnt}
    \end{equation}

    Then, for each cylinder event $E$,
    we have for each $i$:
    \begin{equation} \label{eq:iic1} \lim_{n \to \infty} \prob( E \mid  \Gamma(x_0^{(n)}, \dots, x_k^{(n)}), \, \mathcal{T}(x_0^{(n)}, \dots, x_k^{(n)}) = T) = \nu(E)\ . \end{equation}

    In particular, with the same $i$ and $(x_0^{(n)}, \dots, x_k^{(n)})$, for each fixed finite $\mathcal{D} \subseteq \Z^d$, we have
    \begin{equation} \label{eq:iic2}
    \lim_{n \to \infty} \frac{\prob(\Gamma_{T,i}(x_0^{(n)}, \dots, x_k^{(n)}; \mathcal{D}))}{\tau_T(x_0^{(n)}, \dots, x_k^{(n)}) \nu(\Xi(\mathcal{D}))} = 1\ . 
    \end{equation}
\end{lemma}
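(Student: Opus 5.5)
The plan is to reduce \eqref{eq:iic1} to the existing IIC convergence \eqref{eqn: IIC-conv2}. The mechanism is to factor the event $\Gamma(x_0^{(n)},\dots,x_k^{(n)})\cap\{\mathcal{T}=T\}$ near the origin into a ``local'' part and a ``long-range'' part that depends only on connections leaving a large box. Fix a cylinder event $E$ supported in $B(L)$ and a mesoscopic radius $M$ with $L\ll M\ll n$. We sum over the possible values $\mathcal{D}$ of the restricted cluster $\clust_{B(M)}(0)$, which determines $E$.

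Conditional on $\clust_{B(M)}(0)=\mathcal{D}$, the remaining requirement is that the outer boundary of $\mathcal{D}$ (the vertices of $\mathcal{D}$ on $\partial B(M)$) be connected, through $\Z^d\setminus\mathcal{D}$, to all the other $x_j$ with the prescribed tree structure. The point of the analysis is that, with high probability, exactly one ``exit'' from $B(M)$ carries all the long connections. The alternative is that $0$ has two disjoint arms to distance $\epsilon n$ landing in different subtrees, or that the cluster of $0$ re-enters $B(M)$. Because $x_i^{(n)}=0$ is a leaf of $T$, such a configuration produces an extra pivotal coincidence or a bubble. It should therefore be bounded by $o(n^{(4-d)k-2})$ via BK and the diagrammatic estimates of Corollary~\ref{cor: tree-reduct} and Proposition~\ref{prop: interior-delta}, in the same way as in Lemma~\ref{lem: bub-trunc} and the proof of Lemma~\ref{lem:binbran}. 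The lower bound \eqref{eq:assnt} makes these errors negligible relative to $\tau_T$.

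On the good event, we introduce the first vertex $u$ (the nearest interior vertex of the connectivity tree, or the first point at distance $\epsilon n/2$) where the path from $0$ branches. We then write $\Gamma\cap\{\mathcal{T}=T\}$ as $\{0\lra u\}$ composed with a $(k+1)$-point event rooted at $u$ that lives off a neighbourhood of $0$. Summing over $u$ and applying \eqref{eqn: IIC-conv2} with $V_n=\{u\}$ and $\mathcal{D}_n$ a large set around the remaining tree, we get, uniformly over $u$ at distance of order $n$,
\[\prob(E,\ 0\lra u \text{ off } \mathcal{D}_n)\sim\nu(E)\,\prob(0\lra u \text{ off } \mathcal{D}_n).\]
This uniformity should follow from the standard proof of \eqref{eqn: IIC-conv2}. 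We then resum over $u$ and obtain $\prob(E\cap\Gamma\cap\{\mathcal{T}=T\})=(\nu(E)+o(1))\tau_T$.

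For \eqref{eq:iic2}, note that $\Gamma_{T,i}(\cdot;\mathcal{D})$ differs from $\Gamma\cap\{\mathcal{T}=T\}$ only by requiring that the connection from $x_i=0$ avoid $\mathcal{D}$. We approximate it by the cylinder event $\{0\sa{B(L)\setminus\mathcal{D}}\partial B(L)\}$ intersected with the conditioning event, and then apply \eqref{eq:iic1}. Letting $L\to\infty$ gives $\nu(\Xi(\mathcal{D}))$, provided we can show that connecting to $\partial B(L)$ off $\mathcal{D}$ but not reaching the far points off $\mathcal{D}$ is negligible. This holds because such a configuration forces the cluster to pass through $\mathcal{D}$ after leaving $B(L)$, which costs $O(L^{-2})$ or $O(L^{6-d})$ by \eqref{eqn: KN} and a bubble bound like Lemma~\ref{lem: double-connection-trunc}.

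The main obstacle is the single-exit decoupling step. It requires showing that multiple-arm or re-entry configurations are of smaller order, uniformly in the positions in $G(\epsilon,n)$, while also tracking that the tree-type constraint $\mathcal{T}=T$ is unaffected by modifications inside $B(M)$. I would first handle the tree constraint by observing that, since $0$ is a leaf, $\mathcal{T}$ is determined by the pivotal structure outside the bubble at $u$, and then verify this with a switching-type bijection analogous to Lemma~\ref{lem:newcut}.
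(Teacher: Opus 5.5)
\paragraph{Verdict: gap in the proof of \eqref{eq:iic1}.} Your reduction of \eqref{eq:iic2} to \eqref{eq:iic1} works: approximate $\Gamma_{T,i}$ by the cylinder event $\{0\sa{B(L)\setminus\mathcal{D}}\partial B(L)\}$ intersected with $\Gamma\cap\{\mathcal{T}=T\}$, bound the discrepancy using BK, \eqref{eqn: KN}, the a priori bound and \eqref{eq:assnt}, then let $L\to\infty$. It is more economical than the paper, which reruns its conditioning argument with $\mathcal{C}\cup\mathcal{D}$.

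The gap is in \eqref{eq:iic1}, exactly at the step you flag as the main obstacle. Writing $\Gamma\cap\{\mathcal{T}=T\}$ as ``$\{0\lra u\}$ composed with a $(k+1)$-point event rooted at $u$ that lives off a neighbourhood of $0$'' is not an identity. A disjoint-occurrence decomposition gives only BK upper bounds, and \eqref{eqn: IIC-conv2} says nothing about $\circ$-events.

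Similarly, the asymptotic $\prob(E,\,0\lra u\text{ off }\mathcal{D}_n)\sim\nu(E)\,\prob(0\lra u\text{ off }\mathcal{D}_n)$ holds for a deterministic ``large set around the remaining tree''. But nothing in your outline connects it to $\prob(E\cap\Gamma\cap\{\mathcal{T}=T\})$. The connection from $0$ is actually constrained to avoid a \emph{random} cluster, not a fixed neighbourhood.

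What you need is an exact formula with these properties:
\begin{itemize}
\item it writes $\prob(E\cap\Gamma\cap\{\mathcal{T}=T\})$, up to $o(\tau_T)$, as an average of $\prob(E,\,0\sa{\Z^d\setminus\mathcal{C}}V)$ over a random pair $(V,\mathcal{C})$;
\item $(V,\mathcal{C})$ is determined by edges disjoint from those on which $E$ and the residual connection depend;
\item the same formula with $E$ removed reproduces $\tau_T$.
\end{itemize}
Only then can \eqref{eqn: IIC-conv2}, applied uniformly in $(V,\mathcal{C})$, be resummed.

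\paragraph{How the paper does it, and how to repair your route.} The paper explores from the outside. On the event $G_{n,K}$ that the connections from $B(K)$ to $x_1,\dots,x_k$ share a common pivotal (your ``single exit''), it conditions on the exterior clusters $\mathfrak{C}(K)$ and their entry set $Q(K)\subseteq B(K)$. Then $E\cap G_{n,K}(f)$ occurs iff $E\cap\{0\sa{\Z^d\setminus\mathcal{C}}q\}$ does, and the tree constraint is read off from $(\mathcal{C},q)$. So \eqref{eqn: IIC-conv2} with $V=q$, $\mathcal{D}=\mathcal{C}$ applies uniformly as $K\to\infty$.

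Your branch-point route can be repaired in the same spirit, and your switching idea is the right first move:
\begin{enumerate}
\item Sum over $e=\opiv(0,\{x_1,\dots,x_k\})$.
\item Close $e$ as in Lemma~\ref{lem:newcut}; this is harmless for $E$ once $e$ lies outside its support.
\item Condition on the exact cluster $\clust(\overline{e})$, not on a neighbourhood of the tree. This gives $\beta\sum_e\wto{\mathbb{E}}[\mathbf{1}_{A_e}\,\prob(E,\,0\sa{\Z^d\setminus\wto{\clust}(\overline{e})}\underline{e})]$, with $A_e$ determined by $\wto{\clust}(\overline{e})$.
\end{enumerate}
You then also owe a BK estimate, using \eqref{eq:assnt}, that two kinds of terms contribute $o(\tau_T)$: those with $\underline{e}$ close to $0$, and those where $\wto{\clust}(\overline{e})$ meets a large fixed box around $0$. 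The paper's exterior exploration avoids this second estimate, because $\mathcal{C}$ lies outside $B(K)$ by construction.

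\paragraph{Smaller errors.}
\begin{itemize}
\item $\clust_{B(M)}(0)$ does not determine a general cylinder event $E$.
\item Re-entry of the cluster of $0$ into $B(M)$ is not a rare event; I expect its probability to stay bounded below even under $\nu$. The rare event you need to exclude is the absence of a common pivotal for the connections from the box to the far points.
\end{itemize}
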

We have translated $x_i^{(n)}$ to the origin to simplify the statement of the above lemma; the obvious modification of the above clearly holds for general $(x_0^{(n)}, \dots, x_k^{(n)})$.

We note that the lemma is applied in the argument for Theorem~\ref{thm:treeinduct} in an essentially inductive manner. The assumption \eqref{eq:assnt} is also an assumption of Theorem~\ref{thm:treeinduct}. In turn, Theorem~\ref{thm:treeinduct} is used in an inductive proof of Theorem~\ref{thm:kpt}. In the inductive step of that proof, we assume that \eqref{eq:assnt} holds for a fixed $k$ for all $T$ and all $(x_0^{(n)}, \dots, x_{k}^{(n)})$ and use that assumption to prove the same statement with $k$ replaced by $k+1$. This ensures there is no circularity in the argument.

\begin{proof}
    We give the argument in the case that $i = 0$; the other cases follow via an essentially identical argument.
    Let $K$ be fixed relative to $n$, but large enough that $E$ is measurable with respect to the edges in $B(K)$. We will ultimately take $K \to \infty$ after taking $n \to \infty$ in the proof of \eqref{eq:iic2}.
    
    We define the event
    \[G_{n, K} =\Gamma\left(0, \dots, x_k^{(n)}\right) \cap \left\{\mathcal{T}(0, \dots, x_k^{(n)}) = T \right\} \cap \left\{\opiv\left(B(K), \{x_1^{(n)}, \dots, x_k^{(n)}\}\right) \notin B(K^d)\right\}\ . \]
    It is follows immediately from Lemma~\ref{lem:apriori} that
    \begin{equation}
        \label{eq:binbran2}
        \lim_{K \to \infty} \limsup_{n \to \infty} n^{(d-4)k + 2}\prob\left(\Gamma\left(0, \dots, x_k^{(n)}\right)\cap \left\{\mathcal{T}(0, \dots, x_k^{(n)}) = T \right\} \setminus G_{n, K}\right) =0 \ . 
    \end{equation}
    Indeed, suppose the event appearing in \eqref{eq:binbran2} occurs with $n$ large enough that $x_1, \dots, x_k \notin B(2K^d)$. Then either a) there exist disjoint nonempty subsets $A, B$ partitioning $\{x_1, \dots, x_k\}$ such that the event
    \[\{x_i \lra B(K) \text{ for all $x_i \in A$}\} \circ \{x_i \lra B(K) \text{ for all $x_i \in B$}\} \]
    occurs, or b) there exists a vertex $z \notin B(K^d)$ such that
    \[\{B(K) \Longleftrightarrow z\} \circ \Gamma(z, x_1, \dots, x_k)\]
    occurs. The probability of case a) is dealt with via an identical argument to the one appearing in the proof of Lemma~\ref{lem:binbran}. For case b), we sum over vertices of $B(K)$ to bound the probability of the above event by
    \[C K^d |z|^{4-2d} n^{(4-d)(k-1) - 2} \mathrm{dist}(z, \{x_1, \dots, x_k\})^{4-d}\ .  \]
    Summing over $z \notin B(K^d)$ and taking $n$ and then $K$ to infinity shows \eqref{eq:binbran2}.

    By assumption \eqref{eq:assnt},
    it suffices to show that
    \begin{equation}
        \label{eq:Gtree}
        \lim_{K \to \infty} \limsup_{n \to \infty} \left| \prob(E \mid G_{n, K})  - \nu(E) \right| = 0
    \end{equation}
    and
    \begin{equation}
        \label{eq:Gtree2}
        \lim_{K \to \infty} \limsup_{n \to \infty} \left| \prob(\Gamma_T\left(0, \dots, x_k^{(n)}; \mathcal{D}) \mid G_{n,K}\right) - \nu(\Xi(\mathcal{D})) \right| = 0\ .
    \end{equation}
    We write
        \[G_{n, K}(f) =\Gamma(0, \dots, x_k^n) \cap \left\{\mathcal{T}\left(0, \dots, x_k^{(n)}\right) = T \right\} \cap \{\upiv(B(K), \{x_1^{(n)}, \dots, x_k^{(n)}\})= f\} \]
        and then decompose
    \begin{align}
        \label{eq:Edecomp} \prob(E \cap G_{n, K}) &= \sum_{f \notin B(K^d)} \prob(E \cap G_n(f))\ ,\\
        \label{eq:Edecomp2} \prob(\Gamma_T(0, \dots, x_k; \mathcal{D}) \cap G_{n, K}) &=  \sum_{f \notin B(K^d)} \prob(\Gamma_T(0, \dots, x_k; \mathcal{D}) \cap G_{n, K}(f))\ .
    \end{align}
    
    We explore the clusters of $x_1^{(n)}, \dots, x_k^{(n)}$ outside $B(K)$; let
    \[\mathfrak{C}(K) = \bigcup_{j=1}^k \clust_{\Z^d \setminus B(K)}\left(x_j^{(n)}\right) \]
    and
    \[Q(K) = \{z \in B(K): \, \{y, z\} \text{ open for some } y \in \clust(K)\}\ . \]
    We further decompose the terms of \eqref{eq:Edecomp} and \eqref{eq:Edecomp2}, writing
    \begin{align}
        \prob(E \cap G_n(f)) &= \sum_{\mathcal{C}} \prob(E \cap G_{n, K}(f) \cap\{ \mathfrak{C}(K) = \mathcal{C}\})\label{eq:Edecompa} ,\\
        \prob(\Gamma_T(0, \dots, x_k; \mathcal{D}) \cap G_{n, K}(f)) &= \sum_{\mathcal{C}}  \prob(\Gamma_T(0, \dots, x_k; \mathcal{D}) \cap G_n(f) \cap \{ \mathfrak{C}(K) = \mathcal{C}\}) \label{eq:Edecomp2a}\ .
    \end{align}
    We note that for the event in either of the above sums to be nonempty, the set $\mathcal{C}$ must be connected and contain $f$, and the edge $f$ must be a cut-edge for each connection from a vertex of $Q(K)$ to a vertex $x_i^{(n)}$ for $1 \leq i \leq k$.
    Furthermore, $\mathcal{T}_k\left(y, x_1^{(n)}, \dots, x_k^{(n)}\right)$ must be well-defined and equal $T$ for each $y \in Q(K)$.
    
    We consider only such values of $\mathcal{C}$ in what remains.
    We now note, conditional on the event $\{\mathfrak{C}(K) = \mathcal{C}\} \cap \{Q(K) = q\}$ for any $q$ such that the event is nonempty, the event $E \cap G_{n, K}(f)$ occurs if any only if the following event does:
    \begin{equation}
    \label{eq:E2nd}
    E \cap \left\{0 \stackrel{\Z^d \setminus \mathcal{C}}{\leftrightarrow} q\right\}\ . 
    \end{equation}
    Similarly, conditionally on  $\{\mathfrak{C}(K) = \mathcal{C}\} \cap \{Q(K) = q\}$, the event $\Gamma_T\left(0, \dots, x_k^{(n)}; \mathcal{D}\right) \cap G_{n, K}(f)$ occurs if and only if the following event does:
    \begin{equation}
    \label{eq:E2nda}
    \left\{0 \stackrel{\Z^d \setminus [\mathcal{C} \cup \mathcal{D}]}{\leftrightarrow} q\right\}\ . 
    \end{equation}

     We now conclude the proof of \eqref{eq:iic1}, then make the appropriate modifications to the argument to conclude the proof of \eqref{eq:iic2}. Using the last observation and the fact that $E$ and the values of $\mathfrak{C}(K)$ and $Q(K)$ depend on different bonds, we write (with all sums restricted to the class of $\mathcal{C}$ described above)
    \begin{align}
    \label{eq:nook}
        \prob(E \cap G_{n, K}(f)) &= \sum_{q, \mathcal{C}} \prob(\mathfrak{C}(K) = \mathcal{C}, \, Q(K) = q, 0 \stackrel{\Z^d \setminus \mathcal{C}}{\longleftrightarrow} q) \prob(E \mid Q(K) = q, 0 \stackrel{\Z^d \setminus \mathcal{C}}{\longleftrightarrow} q)\ .
    \end{align}
    
    For each $\delta > 0$, we may choose a $K_0$ large such that, for all $K \geq K_0$, for all large $n$,
    \[(1 - \delta) \leq \frac{\prob(E \mid Q(K) = q, 0 \stackrel{\Z^d \setminus \mathcal{C}}{\longleftrightarrow} q)}{\nu(E)} \leq (1 + \delta)\ , \]
    uniformly in the values of $\mathcal{C}$ and $q$; this is a consequence of the IIC result \eqref{eqn: IIC-conv2}. 
    
    Examining the remaining factor of \eqref{eq:nook}, we note
    \begin{align*}
        \sum_{f \notin B(K^d)} \sum_{q, \mathcal{C}} \prob(\mathfrak{C}(K) = \mathcal{C}, \, Q(K) = q, 0 \stackrel{\Z^d \setminus \mathcal{C}}{\longleftrightarrow} q) = \prob(G_{n, K})\ .
    \end{align*}
    We apply the statements of the last two displays in \eqref{eq:nook} to see that for $K \geq K_0$,
    \[(1+\delta) \nu(E) \leq\liminf_{n \to \infty} \prob(E \mid G_{n, K}) \leq \limsup_{n \to \infty} \prob(E \mid G_{n, K}) \leq (1 + \delta) \nu(E)\ .\]
    Taking $n \to \infty$ and then $K \to \infty$ completes the proof of \eqref{eq:Gtree} and hence the proof of \eqref{eq:iic1}.

    We now complete the proof of \eqref{eq:Gtree2} and hence \eqref{eq:iic2}. An argument similar to the one proving Lemma~\ref{lem: GT-truncation}, which we will briefly sketch, shows that
    \begin{equation} \label{eq:doublecross}
        \lim_{R \to \infty} \lim_{K \to \infty} \sup_{q, \mathcal{C}} \left| \frac{\prob\left(0 \stackrel{\Z^d \setminus [\mathcal{C} \cup \mathcal{D}]}{\longleftrightarrow} q\right)}{\prob\left(0 \stackrel{\Z^d \setminus \mathcal{C}}{\longleftrightarrow} q\right) \prob\left(\left. 0 \sa{\Z^d \setminus \mathcal{D}} \partial B(R) \right| 0 \stackrel{\Z^d \setminus \mathcal{C}}{\longleftrightarrow} q \right)} -1\right| = 0\ ,
        \end{equation}
    where the supremum is over the same values of $q$ and $\mathcal{C}$ considered in \eqref{eq:nook}. Indeed, if $\{(0 \stackrel{\Z^d \setminus \mathcal{C}}{\longleftrightarrow} q\}$ and $\{0 \sa{\Z^d \setminus \mathcal{D}} \partial B(R) \}$ occur but $\{(0 \stackrel{\Z^d \setminus \left[ \mathcal{C} \cup \mathcal{D} \right]}{\longleftrightarrow} q\}$ does not, then the event $\{0 \lra \partial B(R)\} \circ \{\mathcal{D} \sa{\Z^d \setminus \mathcal{C}} q\} $ occurs. There is a $c = c(\mathcal{D})$ such that 
    \[\prob\left(0 \sa{\Z^d \setminus \left[\mathcal{C} \cup \mathcal{D} \right]} q \right) \geq c \prob\left(\mathcal{D} \sa{\Z^d \setminus \mathcal{C}} q \right)\]
    uniformly in $q$ and $\mathcal{C}$ as above, from which \eqref{eq:doublecross} follows.
    
    We now proceed from a similar place as at \eqref{eq:nook}:
     \begin{align*}
        &\prob(\{0 \stackrel{\Z^d \setminus [\mathcal{C} \cup \mathcal{D}]}{\leftrightarrow} q\}\cap G_{n, K}(f))\\
        =&\sum_{q, \mathcal{C}} \prob(\mathfrak{C}(K) = \mathcal{C}, \, Q(K) = q)  \prob\left(0 \stackrel{\Z^d \setminus [\mathcal{C} \cup \mathcal{D}]}{\longleftrightarrow} q\right)\ .
        \end{align*}
       Applying  \eqref{eq:doublecross}, we take $n \to \infty$, $K \to \infty$, and then $R \to \infty$ and apply the IIC result \eqref{eqn: IIC-conv2} to conclude the proof of \eqref{eq:Gtree2}.
    \end{proof}

    \begin{lemma}\label{lem:genlem1}
Let $\epsilon > 0$ be fixed. We have
\begin{align}\label{eq:genlem1}
\lim_{K \to \infty} \lim_{n \to \infty} \sup_{(x_1, x_2) \in F(\epsilon, n)} \left|\frac{\wto{\mathbb{E}}\left[\nu_{\mathbf{e}_1}(\Xi_{\mathbf{e}_1}(\wto{\clust}_{B(2K)}(0) )); \,  \Gamma(0, x_1, x_2) \cap \mathcal{P}_0 \right]}{ \tau(0, x_1) \tau(0, x_2)}  - \rho \beta\right| = 0\ .\end{align}
\end{lemma}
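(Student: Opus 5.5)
We decompose the numerator over the first pivotal edge $f = \upiv(0, x_2)$, which exists with overwhelming probability on $\Gamma(0,x_1,x_2)\cap\mathcal{P}_0$: its absence forces $\{0 \lra x_2\}\circ\{0\lra x_2\}$ together with a connection to $x_1$, and a BK estimate shows this costs an extra factor that vanishes relative to $\tau(0,x_1)\tau(0,x_2)$. For each $f$ we apply Lemma~\ref{beta-lem2} with $G(\mathcal{A}) = \nu_{\mathbf{e}_1}(\Xi_{\mathbf{e}_1}(\mathcal{A}))$. This rewrites the term as $\beta$ times
\[\mathbb{E}\big[\nu_{\mathbf{e}_1}(\Xi_{\mathbf{e}_1}(\clust_{B(2K)}(\{0,\overline{f}\}))); \, 0\Longleftrightarrow \underline{f}, \, 0 \lra x_1, \, \overline{f}\lra x_2 \text{ off } \clust(\underline{f})\big].\]
Lemma~\ref{lem: bub-trunc} shows that the sum over $f\notin B(M)$ is at most $Cn^{4-2d}M^{6-d}$, which is negligible after dividing by $\tau(0,x_1)\tau(0,x_2)\asymp n^{4-2d}$ and letting $M\to\infty$. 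We may therefore restrict to finitely many $f \in B(M)$ with $M \le K$.

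For fixed $f\in B(M)$, we resample the cluster of $\underline f$ (equivalently of $0$) independently. We write the event as an expectation over $\wto{\omega}$ of $\mathbf{1}_{0\Longleftrightarrow\underline f,\,0\lra x_1}$ times $\dwt{\mathbb{P}}(\overline f\lra x_2 \text{ off } \wto{\clust}(\underline f))$. We then truncate the avoided set to $\wto{\clust}_{B(2K)}(\underline f)$, with error controlled by a spanning-tree/BK argument exactly as in Lemma~\ref{lem: GT-truncation}. Lemma~\ref{lem: double-connection-trunc} further replaces $0\Longleftrightarrow\underline f$ by $0\stackrel{B(2K)}{\Longleftrightarrow}\underline f$, at cost $CK^{-2}n^{4-2d}$. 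After these steps, each factor is a local event times a single long connection:
\begin{itemize}
\item in $\wto\omega$, a local event in $B(2K)$ intersected with $\{0\lra x_1\}$;
\item in $\dwt\omega$, $\{\overline f \lra x_2\}$ avoiding a fixed finite set.
\end{itemize}
We apply the IIC convergence \eqref{eqn: IIC-conv2}, in the ratio form \eqref{eq:iic2} of Lemma~\ref{lem:iicnew2} with $k=1$, to each copy. Conditioning on the finite configuration $\mathcal{D}=\wto{\clust}_{B(2K)}(\underline f)$, the $\dwt\omega$ factor becomes $\tau(\overline f,x_2)\,\dwt\nu_{\overline f}(\Xi_{\overline f}(\mathcal{D}))$. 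Summing over the local configuration, the $\wto\omega$ factor becomes $\tau(0,x_1)\,\nu_0[\,\cdot\,;0\Longleftrightarrow\underline f]$. The factor $\nu_{\mathbf{e}_1}(\Xi_{\mathbf{e}_1}(\clust_{B(2K)}(\{0,\overline f\})))$ depends on $\omega$-edges near $\overline f$, which couples the two copies. Since the events are cylinder-type in $B(2K)$ after truncation, the joint limit produces the functional of $(W_0,\dwt W_{\overline f})$ appearing in $E_1\cap E_2\cap E_3$. Finally, $\tau(\overline f,x_2)/\tau(0,x_2)\to 1$ uniformly for $f\in B(M)$ and $x_2\in F(\epsilon,n)$, by \eqref{eqn: alphadef} and \eqref{eqn: HHS}.

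We then take $n\to\infty$, then $K\to\infty$ (monotone convergence of $\Xi(\cdot\cap B(2K))$ to $\Xi(\cdot)$ and of $\clust_{B(2K)}$ to $\clust$), and then $M\to\infty$. Dominated convergence applies via \eqref{eq:finitebubb2}, and the sum over $f$ yields $\rho$.

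The main obstacle is the identification step: matching the $B(2K)$-truncated cluster $\clust_{B(2K)}(\{0,\overline f\})$, built from two different copies, with $\clust(W_0\cup\{\overline f\})$ and $W_0$ in \eqref{eq:rhodef}, and justifying the interchange of the three limits. I would first prove the limit for each fixed $f$ with $K$ fixed, using only cylinder events so that \eqref{eq:iic1} applies directly. Only afterwards would I remove the truncations, using the uniform error bounds above.
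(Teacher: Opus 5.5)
Your outline is the paper's proof of Lemma~\ref{lem:genlem1} step for step. It uses the first-pivotal decomposition, Lemma~\ref{beta-lem2}, localization to $f\in B(M)$ by Lemma~\ref{lem: bub-trunc}, the truncations of Lemmas~\ref{lem: GT-truncation} and~\ref{lem: double-connection-trunc}, IIC convergence in the two copies, and then $n\to\infty$, $K\to\infty$, $M\to\infty$.

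The one substantive difference is the coupling you flag, and there you are more careful than the paper. Once the cluster of $\overline f$ in $\Z^d\setminus\wto\clust(\underline f)$ is generated by $\dwt\omega$, the weight $\nu_{\mathbf{e}_1}(\Xi_{\mathbf{e}_1}(\clust_{B(2K)}(\{0,\overline f\})))$ has to stay inside the $\dwt\omega$-expectation. The paper's passage from \eqref{eq:fgoal0} to \eqref{eq:fexpand5} keeps it outside $\dwt{\prob}$, as though it were $\wto\omega$-measurable. That is what produces the product form of \eqref{eq:rhodef}. The limit your decomposition actually yields is
\[
\sum_{f}\mathbb{E}_{\nu_0}\Big[\mathbf 1_{0\Longleftrightarrow\underline f}\;\dwt{\mathbb{E}}_{\overline f}\big[\wto\nu_{\mathbf{e}_1}\big(\Xi_{\mathbf{e}_1}(W_0\cup S_f)\big);\,\Xi_{\overline f}(W_0)\big]\Big],\qquad S_f=\text{the cluster of }\overline f\text{ in }\Z^d\setminus W_0\text{ under }\dwt\nu_{\overline f}.
\]
This equals $\sum_f(\nu_0\otimes\wto\nu_{\mathbf{e}_1}\otimes\dwt\nu_{\overline f})[E_1\cap E_2\cap E_3]$ if the set in $E_2$ is read as $W_0\cup S_f$, consistent with Step 4 of the overview. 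It does not equal \eqref{eq:rhodef} read literally. You should state explicitly that this is the $\rho$ for which you prove the lemma.

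What the proposal does not yet contain is the proof of this identification, and the tools you name do not give it as stated.

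(i) Fix $K$ and a value $\mathcal D$ of $\wto\clust_{B(2K)}(\underline f)$. The $\dwt\omega$ factor is then $\dwt{\mathbb{E}}[F;\,\overline f\lra x_2\text{ off }\mathcal D]$, with $F$ a function of the cluster of $\overline f$ in $B(2K)\setminus\mathcal D$. That event is not a cylinder event, so \eqref{eq:iic1} does not apply. \eqref{eq:iic2} only gives the ratio of probabilities, not the joint law with $F$. You need $\dwt{\mathbb{E}}[F;\,\overline f\lra x_2\text{ off }\mathcal D]/\tau(\overline f,x_2)\to\dwt\nu_{\overline f}[F;\,\Xi_{\overline f}(\mathcal D)]$. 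Obtain it as follows:
\begin{enumerate}
\item Replace the event by $\{\overline f\lra x_2\}\cap\{\overline f\sa{\Z^d\setminus\mathcal D}\partial B(R)\}$. The difference lies in $\{\overline f\lra\partial B(R)\}\circ\{\mathcal D\lra x_2\}$, which has probability $O(|\mathcal D|R^{-2}n^{2-d})$ by \eqref{eqn: BK} and \eqref{eqn: KN}.
\item Apply \eqref{eqn: IIC-conv} to the resulting cylinder function.
\item Let $R\to\infty$, as in \eqref{eq:doublecross}.
\end{enumerate}

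(ii) The $K\to\infty$ limit is not monotone for the coupled integrand. The cluster of $\overline f$ in $B(2K)\setminus\wto\clust_{B(2K)}(\underline f)$ can lose vertices as $K$ grows. Also, $\mathbf 1_{0\sda{B(2K)}\underline f}$ increases while $\mathbf 1_{\Xi_{\overline f}(\clust_{B(2K)}(0))}$ decreases; the paper's ``by monotonicity'' has this latter problem too. You need almost-sure convergence of the integrand. This includes $\bigcap_K\Xi_{\overline f}(\clust_{B(2K)}(0))=\Xi_{\overline f}(W_0)$, and a justification that $\wto\nu_{\mathbf{e}_1}(\Xi_{\mathbf{e}_1}(\cdot))$ passes to the limit along these non-monotone sets. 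Then apply dominated convergence with the summable bound $\mathbf 1_{0\Longleftrightarrow\underline f}$.

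(iii) Truncating the avoided set to $\wto\clust_{B(2K)}(\underline f)$ now changes the weight as well as the indicator, so it is not literally covered by Lemma~\ref{lem: GT-truncation}. The extra error comes from configurations in which $\wto\clust(\underline f)$ exits $B(2K)$ and returns to touch the cluster of $\overline f$ inside the box. It needs its own one-arm estimate.
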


\begin{proof}
We will introduce a parameter $M \leq K$ for the purpose of the argument. Throughout the proof, we will assume we are in the regime $M \ll K \ll n$. At the conclusion of the proof, we take $n \to \infty$ (hence $|x_i| \to \infty$ for $i = 1, 2$), then $K \to \infty$. This will bring us to \eqref{eq:almost} below, which says the fraction appearing in \eqref{eq:genlem1} is (uniformly close to $\beta Q(M)(1+ o_M(1))$ for an appropriate function $Q(M)$, as $|x_1|, |x_2|$, $n$, and $K$ are taken to infinity in the appropriate order.
Finally, taking $M \to \infty$ and arguing $Q(M) \to \rho$ will complete the argument.

To begin, we expand the numerator of the expression appearing in \eqref{eq:genlem1} over the first pivotal $f$ for $\{0\leftrightarrow x_2\}$, obtaining
\begin{equation}
\label{eq:fexpand}
\sum_{f} \wto{\mathbb{E}}\left[\nu_{\mathbf{e}_1}(\Xi_{\mathbf{e}_1}(\wto{\clust}_{B(2K)}(0))); \,  \Gamma(0, x_1, x_2) \cap \mathcal{P}_0 \cap \{f = \upiv(0, x_2)\} \right]\ .
\end{equation}
We apply Lemma~\ref{beta-lem2} to re-express this as 
\begin{equation}
\label{eq:fexpand2}
\beta \sum_{f} \wto{\mathbb{E}}\left[\nu_{\mathbf{e}_1}(\Xi_{\mathbf{e}_1}(\wto{\clust}_{B(2K)}(\{0, \overline{f}\}))); \,  0 \Longleftrightarrow \underline{f}, \, 0 \leftrightarrow x_1, \, \overline{f} \leftrightarrow x_2 \text{ off } \wto{\clust}(\underline{f}) \right]\ .
\end{equation}
Recall from \eqref{eqn: betadef} that $\beta=p_c/(1-p_c)$.

By Lemma~\ref{lem: bub-trunc}, we can replace the sum over all $f \in \edges$ with a sum over $f$ within distance $M \leq K$ of $0$, up to a correction of smaller order than $n^{4-2d}$. Specifically, defining the quantity
\begin{equation}
\label{eq:fexpand3}
\begin{split}
&\beta \sum_{f \in B(M)} \wto{\mathbb{E}}\left[\nu_{\mathbf{e}_1}(\Xi_{\mathbf{e}_1}(\wto{\clust}_{B(2K)}(\{0, \overline{f}\} )); \,  0 \Longleftrightarrow \underline{f}, \, 0 \leftrightarrow x_1, \, \overline{f} \leftrightarrow x_2 \text{ off } \wto{\clust}(\underline{f})\right] \\
=~&\beta \sum_{f \in B(M)} \wto{\mathbb{E}}\left[\nu_{\mathbf{e}_1}(\Xi_{\mathbf{e}_1}(\wto{\clust}_{B(2K)}(\{0, \overline{f}\} )) \, \mathbf{1}_{ 0 \Longleftrightarrow \underline{f}, \, 0 \leftrightarrow x_1, \, \overline{f} \leftrightarrow x_2 \text{ off } \wto{\clust}(\underline{f})}\right]\ ,
\end{split}
\end{equation}
we have
\begin{equation}
    \label{eq:fexpand4}
    \left|\eqref{eq:fexpand3} - \eqref{eq:fexpand2} \right| \leq C n^{4-2d} M^{6-d} \quad \text{uniformly in $M$, $K$, $n$, $f$, $x_1$, and $x_2$}
\end{equation}
for some $C = C(\epsilon) > 0$.
We focus on the first term on the right-hand side of \eqref{eq:fexpand3}, which will be shown to be of order $n^{4-2d}$, the order of the denominator of the expression inside the limit in \eqref{eq:genlem1}. The other term is of smaller order than that denominator, and hence an error term. 

We perform two simple truncations on the indicator function appearing in \eqref{eq:fexpand3}. First, applying Lemma \ref{lem: GT-truncation}, we see that uniformly in the same parameters as in \eqref{eq:fexpand4}, we have
\begin{equation}
\label{eq:doubleconnf0}
\begin{split}
&\left|\wto{\prob}\left( 0 \Longleftrightarrow \underline{f}, \, 0 \leftrightarrow x_1, \, \overline{f} \leftrightarrow x_2 \text{ off } \wto{\clust}(\underline{f})\right) - \wto{\mathbb{E}}[\mathbf{1}_{ 0\Longleftrightarrow \underline{f}, \, 0 \leftrightarrow x_1,}\dwt{\mathbb{P}}\big( \, \overline{f}\lra x_2 \text{ off } \wto{\clust}_{B(2K)}(\underline{f})\big) ] \right|\\
\leq &C n^{4-2d} K^{(6-d)/d}\ .
\end{split}
\end{equation}
We show in Lemma~\ref{lem: double-connection-trunc} that, uniformly in the same parameters,
\begin{equation}
\label{eq:doubleconnf}
\begin{split}
&\left|\wto{\mathbb{E}}[\mathbf{1}_{ 0\Longleftrightarrow \underline{f}, \, 0 \leftrightarrow x_1,}\dwt{\mathbb{P}}\big( \, \overline{f}\lra x_2 \text{ off } \wto{\clust}_{B(2K)}(\underline{f})\big) ]
- \wto{\mathbb{E}}[\mathbf{1}_{ 0\stackrel{B(2K)}{\Longleftrightarrow} \underline{f}, \, 0 \leftrightarrow x_1,}\dwt{\mathbb{P}}\big( \, \overline{f}\lra x_2 \text{ off } \wto{\clust}_{B(2K)}(\underline{f})\big) ] \right|\\
\leq &C K^{-2} n^{4-2d}\ .
\end{split}
\end{equation}
We apply \eqref{eq:doubleconnf0} and \eqref{eq:doubleconnf} to the first term on the right-hand side of \eqref{eq:fexpand3}. We see that, up to an error term of order $M^d K^{-2} n^{4-2d}$, it is equal to
\begin{equation}
\label{eq:fgoal0}
\beta \sum_{f \in B(M)} \wto{\mathbb{E}}\left[\nu_{\mathbf{e}_1}(\Xi_{\mathbf{e}_1}(\wto{\clust}(\{0, \overline{f}\})(\overline{f}) \cap B(2K))) \, \mathbf{1}_{ 0 \sda{B(2K)} \underline{f}, \, 0 \leftrightarrow x_1, \, \overline{f} \leftrightarrow x_2 \text{ off } \clust_{B(2K)}(\underline{f})}\right]\ .
\end{equation}
We proceed by analyzing \eqref{eq:fgoal0}.

As above at \eqref{eqn: apply-IIC}, we introduce another copy of our probability space endowed with a third independent copy $\dwt{\prob}$ of our percolation measure. Similarly to before, we may treat the set $\wto{\clust}(\underline{f})$ as fixed relative to the configuration $\dwt{\omega}$ sampled from $\dwt{\prob}$ and treat the cluster of $\overline{f})$ in $\Z^d \setminus \wto{\clust}(\underline{f})$ as a function of $\dwt{\omega}$. This allows us to reexpress \eqref{eq:fgoal0} exactly as
\begin{equation}
\label{eq:fexpand5}
\beta \sum_{f} \wto{\mathbb{E}}\left[\nu_{\mathbf{e}_1}(\Xi_{\mathbf{e}_1}(\wto{\clust}(\{0, \overline{f}\})(\overline{f}) \cap B(2K))) \, \mathbf{1}_{ 0 \sda{B(2K)} \underline{f}, \, 0 \leftrightarrow x_1}  \dwt{\prob}\left(\overline{f} \leftrightarrow x_2 \text{ off } \wto{\clust}_{B(2K)}(\underline{f})\right)\right]\ .
\end{equation}
The $\dwt{\prob}$ portion of \eqref{eq:fexpand5} is treated very similarly to \eqref{eqn: apply-IIC}. We write
\begin{align*}
\dwt{\prob}\left(\overline{f} \leftrightarrow x_2 \text{ off } \wto{\clust}_{B(2K)}(\underline{f})\right) &= \tau(\overline{f}, x_2) \dwt{\prob}\left(\overline{f} \leftrightarrow x_2 \text{ off } \wto{\clust}_{B(2K)}(\underline{f}) \mid \overline{f} \leftrightarrow x_2\right)
\end{align*}

Applying the last display and \eqref{eq:fexpand4} in \eqref{eq:fgoal0} and using the fact that $\tau(0, x_2) / \tau(y, x_2) \to 1$ as $x_2 \to \infty$ for fixed $y$, we can define a new quantity with the same asymptotic behavior as the fraction from \eqref{eq:genlem1}. That is, if we set
\[ R(K; x_1, x_2) = \frac{\wto{\mathbb{E}}\left[\nu_{\mathbf{e}_1}(\Xi_{\mathbf{e}_1}(\wto{\clust}_{B(2K)}(0) )); \,  \Gamma(0, x_1, x_2) \cap \mathcal{P}_0 \right]}{ \tau(0, x_1) \tau(0, x_2)}  \]
to be that fraction, then we have
\begin{equation}
\label{eq:hideiic}
    \lim_{M \to \infty} \lim_{K \to \infty} \lim_{n \to \infty} \sup_{(x_1, x_2) \in F(\epsilon, n)}\left| \frac{\beta Q(M, K; x_1, x_2)}{R(K; x_1, x_2)} - 1\right| = 0\ , 
\end{equation}
where
\begin{equation}
    \label{eq:hideiic2}
    \begin{split}
    &Q(M, K; x_1, x_2) := \\
    &\sum_{f \in B(M)}  \wto{\mathbb{E}}\left[\left.\nu_{\mathbf{e}_1}(\Xi_{\mathbf{e}_1}(\wto{\clust}(\{0, \overline{f}\})(\overline{f}) \cap B(2K))) \dwt{\prob}\left(\overline{f} \leftrightarrow x_2 \text{ off } \wto{\clust}_{B(2K)}(\underline{f}) \mid \overline{f} \leftrightarrow x_2\right) \, \mathbf{1}_{ 0 \sda{B(2K)} \underline{f}} \right|0 \lra x_1\right]\ .
    \end{split}
\end{equation}
By the IIC convergence result \eqref{eqn: IIC-conv} and the continuous mapping theorem, we have
\begin{align*}
    Q(M, K) &:= \lim_{|x_1|, |x_2| \to \infty}Q(M, K; x_1, x_2)\\
    &= \sum_{f \in B(M)}  \mathbb{E}_{\wto{\nu}}\left[\nu_{\mathbf{e}_1}(\Xi_{\mathbf{e}_1}(\wto{\clust}(\widetilde{W} \cup \{\overline{f}\}) \cap B(2K))) \dwt{\nu}_{\overline{f}}\left(\Xi_{\overline{f}}(\wto{W} \cap B(2K)(\underline{f}))\right) \, \mathbf{1}_{ 0 \sda{B(2K)} \underline{f}} \right]\ .
\end{align*}
Taking $K \to \infty$, the above expression converges by monotonicity to
\begin{align*}
    Q(M) &:= \lim_{K \to \infty}Q(M, K)\\
    &= \sum_{f \in B(M)}  \mathbb{E}_{\wto{\nu}}\left[\nu_{\mathbf{e}_1}(\Xi_{\mathbf{e}_1}(\wto{\clust}(\widetilde{W} \cup \{\overline{f}\}))) \dwt{\nu}_{\overline{f}}\left(\Xi_{\overline{f}}(\wto{W})\right) \, \mathbf{1}_{ 0 \sda{} \underline{f}} \right]\ .
\end{align*}

Returning to \eqref{eq:hideiic} with the last two displays in mind, we conclude that 
\begin{equation}
    \label{eq:almost}
\lim_{K \to \infty} \lim_{n \to \infty} \sup_{(x_1, x_2) \in F(\epsilon, M)}\left| \frac{\beta Q(M)}{R(M, K; x_1, x_2)} - 1\right| = o_M(1)\ ,\end{equation}
as claimed in the first paragraph of the proof.

It remains to show that 
\begin{equation}
    \lim_{M \to \infty} Q(M) = \rho \label{eq:almost2},
\end{equation}
which we now do. The quantity $Q(M)$ is increasing in $M$ to
\[\sum_{f \in \edges}  \mathbb{E}_{\wto{\nu}}\left[\nu_{\mathbf{e}_1}(\Xi_{\mathbf{e}_1}(\wto{\clust}(\widetilde{W} \cup \{\overline{f}\}))) \dwt{\nu}_{\overline{f}}\left(\Xi_{\overline{f}}(\wto{W})\right) \, \mathbf{1}_{ 0 \sda{} \underline{f}} \right]\ , \]
which is $\rho$ by definition \eqref{eq:rhodef}. The proof is complete.
\end{proof}

\noindent {\bf Acknowledgements.} We are grateful to Wendelin Werner and Perla Sousi for comments and encouragement. The research of S.~C.~was supported by NSF grant DMS-2154564. The research of J.~H.~was supported by NSF grant DMS-1954257. The research of P.S. was supported by NSF grants DMS-2154090 and DMS-2238423, and a Simons Fellowship. This work was completed while P.S. was in residence at SLMath.

\end{document}